\newtheorem{theorem}{Theorem}[section]
\newtheorem{lemma}[theorem]{Lemma}
\newtheorem{proposition}[theorem]{Proposition}
\newtheorem{corollary}[theorem]{Corollary}
\newtheorem{claim}[theorem]{Claim}
\theoremstyle{definition}
\theoremstyle{remark}
\newtheorem{remark}{Remark}[section]
\DeclareMathOperator{\sech}{sech}
\DeclareMathOperator{\csch}{csch}
\DeclareMathOperator{\spann}{span}
\newcommand{\R}{\mathbb{R}}
\newcommand{\C}{\mathbb{C}}
\newcommand{\Z}{\mathbb{Z}}
\newcommand{\la}{\langle}
\newcommand{\ra}{\rangle}
\newcommand{\pd}{\partial}
\newcommand{\eps}{\epsilon}
\newcommand{\mF}{\mathcal{F}}
\newcommand{\bQ}{\mathbf{Q}}
\newcommand{\bQd}{\mathbf{Q'}}
\newcommand{\bR}{\mathbf{R}}
\newcommand{\bRd}{\mathbf{R'}}
\newcommand{\tg}{\tilde{g}}
\renewcommand{\a}{\alpha} 
\renewcommand{\k}{\kappa}
\begin{document}
\title{Transverse linear stability of line solitons for 2D Toda}
\author{Tetsu Mizumachi}
\address{Division of Mathematical and Information Sciences,
Hiroshima University, Kagamiyama 1-7-1, 739-8521 Japan}
\email{tetsum@hiroshima-u.ac.jp}
\keywords{$2$D Toda, line solitary waves, transverse linear stability}
\subjclass[2020]{Primary, 35B35, 37K40; Secondary, 35Q51}
\begin{abstract}
The $2$-dimensional Toda lattice ($2$D Toda) is a completely integrable
semi-discrete wave equation with the KP-II equation in its continuous limit.
Using Darboux transformations, we prove the linear stability of
$1$-line solitons for $2$D Toda of any size in an exponentially weighted space.
We prove that the dominant part of solutions to the linearized equation around
a $1$-line soliton is a time derivative of the $1$-line soliton multiplied by
a function of time and transverse variables. The amplitude
is described by a $1$-dimensional damped wave equation in the transverse
variable, as is the case with the linearized KP-II equation.
\end{abstract}
\maketitle
\section{Introduction}
\label{sec:intro}
In this paper, we consider transverse linear stability of
$1$-line solitons for the $2$-dimensional Toda lattice equation
\begin{equation}
  \label{eq:2dToda}
  \pd_x\pd_s\log(1+V_n)=V_{n+1}-2V_n+V_{n-1}
\end{equation}
where $V_n=V_n(s,x)$ and $(n,s,x)\in\Z\times \R^2$\,.

\par
By the change of variables
\begin{equation}
  \label{eq:v-change}
t=x+s\,,\qquad y=x-s\,,\qquad R_n=\log(1+V_n)\,,  
\end{equation}
Eq.~\eqref{eq:2dToda} is translated into 
\begin{equation}
  \label{eq:2dToda-h}
(\pd_t^2-\pd_y^2)R_n=e^{R_{n+1}}-2e^{R_n}+e^{R_{n-1}}\,,
\end{equation}
where $R_n=R_n(t,y)$ and $(n,t,y)\in\Z\times\R^2$.
If $R_n$ is independent of $y$, then $R_n$ is a solution of the Toda
lattices
\begin{equation}
  \label{eq:1d-Toda}
  \frac{d^2R_n}{dt^2}=e^{R_{n+1}}-2e^{R_n}+e^{R_{n-1}}\,.
\end{equation}
Both \eqref{eq:2dToda} and \eqref{eq:1d-Toda} are integrable.
See, e.g. \cite{Flaschka, Hirota, MOP}.
\par
Using $\tau$-functions, we can rewrite \eqref{eq:2dToda} in Hirota's bilinear
form:
\begin{gather}
\label{eq:tau}
1+V_n=\tau_{n+1}\tau_{n-1}/\tau_n^2\,,
  \\
\label{eq:bilinear}
D_sD_x\tau_n\cdot\tau_n=2(\tau_{n+1}\tau_{n-1}-\tau_n^2)\,,
\end{gather}
where
\begin{equation*}
  D_sD_x\tau_n\cdot\tau_n:=\pd_{\eps_1}\pd_{\eps_2}
\tau_n(s+\eps_1,x+\eps_2)\tau_n(s-\eps_1,x-\eps_2)\bigr|_{\eps_1=\eps_2=0}\,.  
\end{equation*}
If $\pd_x\phi_i=e^\pd \phi_i$ and $\pd_s\phi_i=-e^{-\pd}\phi_i$
for $i=1$, $\ldots$, $N$,
\begin{gather*}
\tau_n=
\begin{vmatrix}
  \phi_1(n) & \phi_1(n+1) & \ldots & \phi_1(n+N-1)\\
  \vdots & \vdots & \vdots  & \vdots \\
  \phi_N(n) & \phi_N(n+1) & \ldots & \phi_N(n+N-1)
\end{vmatrix}
\end{gather*}
satisfies \eqref{eq:bilinear}. 
See e.g. \cite{Hirota}. 
Especially, if $a\in\R\setminus\{0\}$, $\k=\log|a|$ and 
\begin{equation}
  \label{eq:tau-1}
\tau_n=\phi_1(n)=a^ne^{ax-s/a}+a^{-n}e^{x/a-as}
\quad\text{for $(n,s,x)\in\Z\times\R^2$,}
\end{equation}
we have a $1$-line soliton solution
\begin{equation}
  \label{eq:1sol}
V_n=
\begin{cases}
  &  \sinh^2\k\sech^2\left(n\k-t\sinh\k\right)=:V^\k_n
  \quad\text{if $a<0$,}
  \\ &
  \sinh^2\k\sech^2\left(n\k+t\sinh\k\right)
  \quad\text{if $a>0$,}
\end{cases}
\end{equation}
which tends to $0$ as $n\to\pm\infty$ and do not decay in the transversal
direction. Note that $R^\k_n:=\log(1+V^\k_n)$ is a $1$-soliton solution of
\eqref{eq:1d-Toda}.
For multi-line soliton solutions of \eqref{eq:2dToda} and
their classification, see e.g. \cite{Biondini-Kodama,Biondini-Wang}.
\par
The $1$-dimensional Toda lattice equation \eqref{eq:1d-Toda}
has the KdV equation in its continuous limit (\cite{Bambusi-Kappeler-Paul}).
The stability of the soliton solutions of \eqref{eq:1d-Toda} was studied using
the PDE method (\cite{BHW,FP2,FP3,Miz09,MP}) and the nonlinear steepest
descent method (\cite{Kruger-Teschl-1,Kruger-Teschl-2}).
\par
In this paper, we study the transverse linear stability of \eqref{eq:1sol}
as a solution of the $2$-dimensional Toda lattices \eqref{eq:2dToda-h}.
If we linearize \eqref{eq:2dToda-h} around $R=\log(1+V^\k)$, we have
\begin{equation}
  \label{eq:linear-1}
(\pd_t^2-\pd_y^2)\bRd=(e^\pd-2+e^{-\pd})(1+V^\k)\bRd\,,
\end{equation}
where $e^{\pm\pd}$ are shift operators defined by
$e^{\pm\pd}f(n,s,x)=f(n\pm1,s,x)$.
\par
We will prove that a solution of \eqref{eq:linear-1} satisfying a
secular term condition decays exponentially in a weighted space
whose weight function increases exponentially as $n\to\infty$.

Let $\a\in\R$ and $\ell^2_\a L^2$ be a complex Hilbert
space with an inner product
\begin{equation*}
(f,g)_{\ell^2_\a L^2}  
=\sum_{n\in\Z}e^{2\a n}\int_\R f(n,y)\overline{g(n,y)}\,dy\,,
\end{equation*}
and let $\ell^2_\a H^1$ be a Hilbert space with a norm
$$\|f\|_{\ell^2_\a H^1}
=\left(\|\pd_yf\|_{\ell^2_\a L^2}^2+\|f\|_{\ell^2_\a L^2}^2\right)^{1/2}\,.$$
\par
Let $\tg^{\pm,*}(t,y,\eta)$ be solutions to
the adjoint equation of \eqref{eq:linear-1} defined by
\eqref{eq:tg1*form} and \eqref{eq:tg2*form}, respectively and
\begin{equation}
  \label{def:eta*}
\eta_*(\a)=\tanh(\k+\a)\sqrt{\sinh\a\sinh(2\k+\a)}\,.
\end{equation}
\begin{theorem}
  \label{thm:main}
  Let $\a\in(0,2\k)$ and $c=\sinh\k/\k$.
Suppose that $\eta_0\in(0,\eta_*(\a))$ and $t_0\in\R$. If
  $\bRd(t)$ is a solution of \eqref{eq:linear-1}
  in the class $C(\R;\ell^2_\a H^1(\R))\cap C^1(\R;\ell^2_\a L^2(\R))$
  and if for every $\eta\in[-\eta_0,\eta_0]$, 
  \begin{equation*}
    \sum_{n\in\Z}\int_\R
    \left(\bRd(t)\overline{\pd_t\tg^{\pm,*}_n(t,y,\eta)}
      -\pd_t\bRd(t)\overline{\tg^{\pm,*}_n(t,y,\eta)}\right)\,
    dy=0
  \end{equation*}
holds at $t=t_0$, then for every $t\ge t_0$, 
  \begin{align*}
    & e^{-\a ct}\left(
\|\bRd(t)\|_{\ell^2_\a H^1(\R)}+ \|\pd_t\bRd(t)\|_{\ell^2_\a L^2(\R)}\right)
\\  \le & K e^{-b (t-t_0)}e^{-\a ct_0}
    (\|\bRd(t_0)\|_{\ell^2_\a H^1(\R)}
    + \|\pd_t\bRd(t_0)\|_{\ell^2_\a L^2(\R)})
  \end{align*}
where $K$ and $b$ are positive constants that are independent of
$t$, $t_0$ and $\bRd$.
\end{theorem}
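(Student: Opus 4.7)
The plan is to use the integrability of \eqref{eq:2dToda-h}: a Darboux transformation conjugates the variable-coefficient operator in \eqref{eq:linear-1} to the linearization around the zero background, a constant-coefficient equation amenable to Fourier analysis in the exponentially weighted space. I would begin by partial-Fourier-transforming \eqref{eq:linear-1} in $y$, producing a family of lattice equations
\[
(\pd_t^2+\eta^2)\widehat{\bRd}(t,\eta)=(e^\pd-2+e^{-\pd})(1+V^\k)\widehat{\bRd}(t,\eta),\qquad \eta\in\R.
\]
By Plancherel it suffices to prove an $\eta$-uniform bound on $\widehat{\bRd}(t,\eta)$ in $\ell^2_\a(\Z)$ (and its $\pd_t$-counterpart), with the orthogonality hypothesis needed only for $|\eta|\le\eta_0$; for $|\eta|>\eta_0$ the exponential decay should follow from the dispersive properties of the weighted semigroup alone.

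Next, for each $\eta$ I would construct a Darboux transformation $\mathbf{D}_\eta$, built from the wavefunction $\phi_1$ of \eqref{eq:tau-1}, intertwining the free linearized operator with the variable-coefficient one. The adjoint transformation has a two-dimensional cokernel spanned by the adjoint solutions $\tg^{\pm,*}(\cdot,\cdot,\eta)$ appearing in the hypothesis, so the secular-term condition says precisely that $\widehat{\bRd}(t_0,\eta)$ lies in the range of $\mathbf{D}_\eta$ for every $|\eta|\le\eta_0$. Since the Darboux structure commutes with the flow, there is a free-equation solution $\bQd(t,\eta)$ with $\widehat{\bRd}(t,\eta)=\mathbf{D}_\eta\bQd(t,\eta)$ for all $t\ge t_0$. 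Estimating the free equation in $\ell^2_\a$ is then a direct Fourier calculation: conjugating $\bQd_n=e^{-\a n}u_n$ and Fourier-transforming in $n$ diagonalizes the constant-coefficient problem, and one checks that the dispersion relation produces, after factoring out the moving-frame prefactor $e^{\a ct}$, an exponential decay rate $b>0$ that is uniform in $\eta\in[-\eta_0,\eta_0]$ exactly under the condition $\eta_0<\eta_*(\a)$; the quantity $\eta_*(\a)$ is the critical transverse frequency at which this margin fails.

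Finally, one transfers the estimate from $\bQd$ back to $\bRd$ via the boundedness of $\mathbf{D}_\eta$ and of its co-secular inverse on $\ell^2_\a$, uniformly in $\eta\in[-\eta_0,\eta_0]$, with the $\pd_t\bRd$ component handled by the analogous energy estimate for the free equation. The main obstacle I anticipate is the uniform inversion of $\mathbf{D}_\eta$ as $\eta\to 0$: the secular directions become resonant at $\eta=0$, so the orthogonality hypothesis must be used \emph{quantitatively}, through careful asymptotic analysis of the Jost-type adjoint solutions $\tg^{\pm,*}$ near the band edge, to extract an $\eta$-uniform bound. A secondary technical point is propagating the $H^1$-in-$y$ regularity through the Darboux transformation; since $\mathbf{D}_\eta$ depends on $\eta$, this amounts to controlling its $\eta$-derivative, equivalently a commutator with $\pd_y$.
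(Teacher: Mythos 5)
Your overall strategy is the paper's: Fourier transform in $y$, use a Darboux transformation to intertwine the linearization around $V^\k$ with the free linearization, read the secular-term condition as a solvability/range condition for that transformation, and pull back the weighted exponential decay of the free flow (where $\eta_*(\a)$ indeed appears as the critical transverse frequency, and the prefactor $e^{\a ct}$ with $c=\sinh\k/\k$ comes from the moving weight). The low-frequency solvability under the orthogonality hypothesis, uniformly for $|\eta|\le\eta_0$, is also carried out in the paper essentially as you describe (Lemmas~\ref{lem:C-low}, \ref{lem:sol-Q}, \ref{lem:C'-low}), so that part of your plan is sound.

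The genuine gap is in the intermediate band $\eta_0<|\eta|<\eta_*(\a)$, which your proposal dismisses with ``for $|\eta|>\eta_0$ the exponential decay should follow from the dispersive properties of the weighted semigroup alone.'' That is false as stated: for every $|\eta|<\eta_*(\a)$ the operator $C'(\eta)$ on $\ell^2_\a$ still has the one-dimensional kernel $\spann\{g^+(\eta)\}$ (Lemma~\ref{lem:kerC-C'}), so the secular modes $g^\pm(\eta)$ persist in $\ell^2_\a$ there, and the hypothesis of the theorem removes them only for $|\eta|\le\eta_0$. These modes are \emph{not} in the range of the Darboux transformation and are not controlled by the free semigroup; they decay only because $\Re\delta(\eta)>0$ for $\eta\ne0$, which is the content of Lemma~\ref{cl:delta-eta} and requires the explicit computation of $\delta(\eta)=\kappa^{-1}\sinh\kappa\,\gamma(\eta)-\mu(\eta)$ — a step entirely absent from your outline. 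The paper therefore splits $P_0(\eta_0,\eta_1)$ into the secular projection $P_1(t,\eta_1)-P_1(t,\eta_0)$, estimated by $e^{-\delta_R(\eta_0)(t-s)}$, and a complementary part handled by the Darboux correspondence. A second omission is the transition across $|\eta|=\eta_*(\a)$ itself: there $C'(\eta)$ changes from Fredholm-with-kernel to invertible, and neither regime gives bounds uniform up to the threshold in $\ell^2_\a$; the paper resolves this by running the two regimes in two different weights $\a_1<\a<\a_2$ with $\eta_*(\a_1)<\eta_1<\eta_*(\a)<\eta_2<\eta_*(\a_2)$ and complex-interpolating (\eqref{eq:freq1-2}, \eqref{eq:freq1-2'}, \eqref{eq:freqt}). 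Relatedly, you misplace the main difficulty at $\eta\to0$: the $1/\eta$ singularity of $g^{-},\tg^{-,*}$ is handled by explicit formulas and the nondegenerate pairing $\la g^\pm,\,\cdot\,g^{\mp,*}\ra=-2\mu(\mp\eta)$, and the cokernel structure is one dimension of solvability (against $\tg^{+,*}$) plus one dimension of normalization of the preimage (against $\tg^{-,*}$), not a two-dimensional cokernel.
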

\begin{remark}
 If $\eta$ is close to $0$,
\begin{align*}
& \spann\{\tg^{+,*}(\eta),\tg^{-,*}(\eta)\} \simeq 
e^{iy\eta}(e^\pd-2+e^{-\pd})^{-1} \spann\{\pd_tR^\k\pd_\k R^\k\}\,.
\end{align*}
\end{remark}
\begin{remark}
The exponentially weighted space is useful to observe that
a $1$-line soliton $V^\k$ moves faster to the direction $(n,y)=(1,0)$
than any solutions of \eqref{eq:linear-1}.
The idea was first applied to prove the asymptotic stability of
solitary waves for generalized KdV equaitons (\cite{PW}).
Their idea works not only for solitary waves of $1$-dimensional long
wave models but also for line solitary waves for long wave models
(\cite{Miz15,Miz18,Miz19,Miz-Shimabukuro20}).
\par  
  A $1$-line soliton $V_n=\sinh^2\k\sech^2(n\k+t\sinh\k)$ is
linearly stable in $\ell^2_{-\a}H^1(\R)\times \ell^2_{-\a}L^2(\R)$.
\end{remark}
\begin{remark}
Formally, \eqref{eq:2dToda-h} has the KP-II equation 
in its continuous limit, whereas the $2$-dimensional Toda equation
of elliptic type has the KP-I equation in its continuous limit.
The lump solutions for elliptic $2$d-Toda have been studied by \cite{YLiu-JWei-Toda1,YLiu-JWei-KP1,YLiu-JWei-Wang-KP1}  using the B\"acklund transformations.
\end{remark}
\begin{remark}
If \eqref{eq:2dToda-h} is discreitzed in $y$, then the equation is not
integrable anymore. For such equations,
metastability of small line solitary waves has been proved by
\cite{Hristrov-Pel,Pel-Sch}.
\end{remark}
To prove Theorem~\ref{thm:main}, we will use Darboux transformations to compare
the rate of decay of solutions to \eqref{eq:linear-1} and that
of solutions to the linearized  equation around $0$:
\begin{equation}
  \label{eq:linear-0}
  (\pd_t^2-\pd_y^2)\bR=(e^\pd-2+e^{-\pd})\bR
  \quad\text{on $\Z\times\R^2$.}
\end{equation}
\par
The B\"acklund transformations have been used to prove
nonlinear stability of solitons or breathers
for $1$-dimensional equations such as KdV, NLS and the
sine-Gordon equation in $L^2(\R)$ or in less regular spaces
since a B\"acklund transformation gives an isomorphism between
a neighborhood of $0$ and a neighborhood of solitons or breathers
in various norms.
See e.g. \cite{Alejo-Munoz13,Alejo-Munoz15,Contreras-Pelinovsky,
CPS,Koch-Tataru,MPel,MT,MV}.
\par

Darboux transformations map solutions of the linearized equation
around $N$-soliton that satisfy a secular term condition
to those of the linearize equation around $(N-1)$-soliton.
This property has been used to prove linear stability of
soliton solutions for solitons of $1$-dimensional Toda lattices, KdV
and KP-II (\cite{Miz13,Miz15,Miz23,MP,BHW}).  For $1$-dimensional
equations such as KdV and \eqref{eq:1d-Toda},
the secular modes arise from the symmetry of
soliton solutions, and they are finite dimensional.
\par
On the other hand, if we investigate the spectrum of the linearized operator
around a line solitary for long wave models of $3$D gravity water waves
in an exponentially weighted space, we find that a curve of
continous spectrum lies on the stable half plane and goes through $0$
(\cite{Gey-Pel-LiuY,Miz15,Miz-Shimabukuro17,Miz23,Rousset-CSun}).
\par
Being different from linearized equations for continuous models,
the linearized equation \eqref{eq:linear-1} is non-autonomous.
Nevertheless, we can find solutions of \eqref{eq:linear-1} that decay like
$O(e^{-t\delta})$ for any $\delta>0$ as is the case with the linearized
KP-II equation.
We will impose a secular term condition to exclude these slowly decaying
solution of \eqref{eq:linear-1} and show that via
the Darboux transformation,
solutions of \eqref{eq:linear-1} satisfying the secular term
condition are connected to solutions of \eqref{eq:linear-0}.
As a result, we find that solutions that do not include any secular modes
tend to $0$ at the same rate as solutions of \eqref{eq:linear-0}.
\par
Slowly decaying secular modes of linearized equations are hazardous to
use B\"{a}cklund to prove nonlinear stability of $1$-line solitons.
Recently, $1$-codimensional stability of $1$-line solitons of the
KP-II equation has been proved by \cite{Pompili} for perturbations in
critical function spaces (\cite{HHK}), which exclude low frequency
waves and prevent phase shifts of line solitons.  However, the idea of
\cite{Pompili} cannot be applied directly to \eqref{eq:2dToda-h}
because the equation is discrete in $n$.  We expect that our linear
stability result (Theorem~\ref{thm:main}) is a key step to prove
nonlinear stability of $1$-line solitons for \eqref{eq:2dToda-h} by
PDE methods as it is for the KP-II equation and the Benney-Luke equation
(\cite{Miz15, Miz18, Miz19, Miz-Shimabukuro20}.
\par
The secular modes corresponding to the local speeds are the dominant
part of solutions for \eqref{eq:linear-1} as $t\to\infty$ and
a dissipative wave equation describes their time evolution.
\begin{theorem}
  \label{thm:profile}
Let $\a\in(0,2\k)$, $\lambda_1=\coth\k-1/\k$ and $\lambda_2=(\sinh2\k/2\k-1)/2(\sinh\k)^3$.
Suppose that $\bRd$ is a solution  of \eqref{eq:linear-1} with
$$(\bRd(0),\pd_t\bRd(0))\in
(\ell^2_\a H^1(\R)\times\ell^2_\a L^2(\R))\cap (\ell^2_\a L^1(\R)
\times \ell^2_\a L^1(\R))\,.$$ Then there exists $f\in L^1(\R)$ such that
as $t\to\infty$,
\begin{equation*}
  \left\|
    \begin{pmatrix}
    \bRd(t) \\ \pd_t\bRd(t)  
    \end{pmatrix}
     - (H_t*W_t*f)(y)
     \begin{pmatrix}
     \pd_tR^\k \\ \pd_t^2R^\k
     \end{pmatrix}
     \right\|_{\ell^2_\a H^1(\R)\times \ell^2_\a L^2(\R)}
  =O(t^{-1/4})\,,
\end{equation*}
where $H_t(y)=(4\pi\lambda_2t)^{-1/2}e^{-y^2/4\lambda_2t}$ and
$W_t(y)=(2\lambda_1)^{-1}$ for $y\in [-\lambda_1 t, \lambda_1 t]$
and $W_t(y)= 0$ otherwise.
\end{theorem}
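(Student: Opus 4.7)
The plan is to decompose $\bRd$ into a modulational part carrying the secular content and a residual that satisfies the orthogonality hypothesis of Theorem~\ref{thm:main}, and then extract the asymptotic profile of the modulational part by Fourier analysis on the Darboux-pulled-back free problem \eqref{eq:linear-0}.

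I would write
$\bRd_{\mathrm{mod}}(t,y)=\sum_\pm\int_{-\eta_0}^{\eta_0}\hat a_\pm(\eta)\tg^\pm(t,y,\eta)\,d\eta$
and $\bRd_{\mathrm{res}}=\bRd-\bRd_{\mathrm{mod}}$, where $\tg^\pm(t,y,\eta)$ are the plane-wave-type solutions of \eqref{eq:linear-1} obtained by applying the Darboux transformation used in Theorem~\ref{thm:main} to the plane waves $e^{i\eta y+i\omega_\pm(\eta)t+\zeta(\eta)n}$ of \eqref{eq:linear-0}, with $\zeta(\eta)$ chosen so that the Darboux image is a bounded secular mode. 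The densities $\hat a_\pm(\eta)$ are then fixed uniquely, via the biorthogonality between $\tg^\pm$ and $\tg^{\pm,*}$ under the symplectic pairing of Theorem~\ref{thm:main}, so that $\bRd_{\mathrm{res}}$ satisfies the orthogonality hypothesis at $t=0$; this expresses $\hat a_\pm$ as explicit linear functionals of $(\bRd(0),\pd_t\bRd(0))$. Theorem~\ref{thm:main} then bounds the residual in the co-moving frame by $e^{-bt}$, an exponentially small error, so the analysis reduces to computing the asymptotic of $\bRd_{\mathrm{mod}}$.

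Expanding the dispersion branches of \eqref{eq:linear-0} at $\eta=0$, using $\omega^2=\eta^2+4\sinh^2(\zeta/2)$ together with the implicit relation between $\zeta$ and $\eta$ needed for Darboux boundedness, yields two analytic branches
$$\omega_\pm(\eta)=\pm\lambda_1\eta+i\lambda_2\eta^2+O(\eta^3),$$
and a direct computation identifies the Taylor coefficients with the stated $\lambda_1=\coth\k-1/\k$ and $\lambda_2=(\sinh 2\k/2\k-1)/(2\sinh^3\k)$. By the remark following Theorem~\ref{thm:main}, $\tg^\pm(t,\cdot,0)\propto(e^\pd-2+e^{-\pd})^{-1}\pd_tR^\k$ after a symmetric $\pm$-combination that kills the $\pd_\k R^\k$ component, so the leading $n$-profile of $(\bRd_{\mathrm{mod}},\pd_t\bRd_{\mathrm{mod}})$ is $(\pd_tR^\k,\pd_t^2R^\k)$. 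The pairing furthermore forces $\hat a_\pm(\eta)=\pm\hat f(\eta)/(2i\lambda_1\eta)$ for an amplitude $\hat f$ that is linear in the initial data (with $f\in L^1(\R)$ by the $\ell^2_\a L^1$ assumption), so the Taylor-truncated integrand becomes
$$\hat f(\eta)\,\frac{\sin(\lambda_1\eta t)}{\lambda_1\eta}\,e^{-\lambda_2\eta^2t}=\hat f(\eta)\,\widehat{W_t}(\eta)\,\widehat{H_t}(\eta),$$
whose inverse Fourier transform is exactly $(H_t*W_t*f)(y)$.

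The remainder comes from three sources: (i) the $O(\eta^3)$ phase correction to $\omega_\pm$, (ii) the $\eta$-dependent amplitude corrections to $\tg^\pm$ away from $\eta=0$, and (iii) the tail $|\eta|>\eta_0$. Each is handled by a Van der Corput / Gaussian-smoothing estimate in $\eta$, the sharp rate $O(t^{-1/4})$ emerging at the stationary scale $|\eta|\sim t^{-1/2}$ where the cubic phase error $\eta^3t$ first becomes comparable to the width of the smoothing factor $e^{-\lambda_2\eta^2t}$. The main obstacle I expect is exactly this Van der Corput bound: showing that the combined cubic-phase and amplitude error produces $O(t^{-1/4})$ and not a slower polynomial rate in the weighted $\ell^2_\a H^1\times\ell^2_\a L^2$ norm, while transferring cleanly back from the Fourier side to the lattice weight in $n$. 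A secondary but non-trivial step is verifying that the Taylor expansion genuinely reproduces the stated constants $\lambda_1,\lambda_2$, and that the apparent singularity of $\hat a_\pm(\eta)$ at $\eta=0$ cancels against the degeneracy of the pairing so that the decomposition is well-defined.
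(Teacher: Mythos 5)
Your proposal follows essentially the same route as the paper: project onto the low-frequency secular modes via the biorthogonal pairing (whose time-independence is Lemma~\ref{lem:qJ-1q*}), dispose of the residual with Theorem~\ref{thm:main}, expand $\delta(\eta)=-i\lambda_1\eta+\lambda_2\eta^2+O(\eta^3)$ (Lemma~\ref{cl:delta-eta}), and read off $H_t*W_t*f$ from the factor $e^{-\lambda_2 t\eta^2}\sin(\lambda_1 t\eta)/(\lambda_1\eta)$ at the Gaussian scale $|\eta|\sim t^{-1/2}$. The one cosmetic difference is that the paper does not cancel the $\pd_\k R^\k$-type mode by a symmetric $\pm$-combination; it observes instead that this mode, lacking the $\sin(t\delta_I(\eta))/\eta$ enhancement, already contributes only $O(\|e^{-t\delta_R(\eta)}\|_{L^2(-\eta_0,\eta_0)})=O(t^{-1/4})$ and is absorbed into the error, while the apparent $1/\eta$ singularity you flag is resolved by working with the regular combinations $g^1,g^2$ built from $g^{\pm}$.
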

\par
Our plan for the present paper is as follows.
In Section~\ref{sec:preliminaries},
we express secular modes of \eqref{eq:linear-1} as products of
Jost functions and dual Jost functions for a Lax pair
with a $1$-line soliton potential.
A Darboux transformation provides a correspondence between the solutions of
\eqref{eq:linear-1} and those of \eqref{eq:linear-0}.
In Section~\ref{sec:Darboux}, we will formulate the correspondence
by using Jost functions and dual Jost functions.
In Section~\ref{sec:exp-st0}, we will prove linear
stability of $0$.  In Section~\ref{sec:Darboux-2}, we will estimate
fundamental solutions of Darboux transformations.
In Section~\ref{sec:proof}, we will prove Theorems~\ref{thm:main}
and \ref{thm:profile}.
\bigskip

\section{Jost functions and secular modes}
\label{sec:preliminaries}
Using shift operators, we can rewrite \eqref{eq:2dToda} as
\begin{equation}
  \label{eq:2dToda'}
\pd_x\pd_s\log(1+V)=(e^\pd-2+e^{-\pd})V\quad\text{on $\Z\times\R^2$,}
\end{equation}
where $V=V(n,s,x)$. Let
\begin{equation}
  \label{eq:Laxpair}
  L_1=\pd_s+(1+V)e^{-\pd}\,,\quad
  L_2=\pd_x-e^\pd-e^\pd(\pd_xq)e^{-\pd}\,.
\end{equation}
The Lax pair $(L_1,L_2)$ satisfies the compatibility condition $[L_1,L_2]=0$
if $V$ is a solution of \eqref{eq:2dToda'}.
\par
We say $\Phi(n,s,x)=\Phi_n(s,x)$ is a Jost function for the Lax pair
$L_1$, $L_2$ if $L_1\Phi= L_2\Phi=0$. That is,
\begin{equation}
  \label{eq:Jost-sol}
  \pd_s\Phi_n+(1+V_n)\Phi_{n-1}=0\,,\quad
  \pd_x\Phi_n-\Phi_{n+1}-\pd_xq_{n+1}\Phi_n=0\,.
\end{equation}
We say $\Phi^*=\{\Phi_n^*\}_{n\in\Z}$ is a dual Jost function for
the Lax pair $L_1$, $L_2$  if
$(L_1)^*\Phi^*= (L_2)^*\Phi^*=0$. That is,
\begin{equation}
    \label{eq:dualJost-sol}
  \pd_s\Phi_n^*-(1+V_{n+1})\Phi_{n+1}^*=0\,,\quad
  \pd_x\Phi_n^*+\Phi_{n-1}^*+\pd_xq_{n+1}\Phi_n^*=0\,.
\end{equation}
Analogous to the KP-II equation, products of Jost and dual Jost functions are solutions of linearized Toda equations.
\begin{lemma}
  \label{lem:prod}
Assume \eqref{eq:Jost-sol}, \eqref{eq:dualJost-sol} and $[L_1,L_2]=0$.
Then
 \begin{equation*}
   \pd_x\pd_s(\Phi\Phi^*)
   =(e^\pd-1)(1+V)(1-e^{-\pd})\Phi\Phi^*\,.
 \end{equation*}
\end{lemma}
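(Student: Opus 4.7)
The plan is to prove the identity by direct computation, organized so that the factor $(e^\pd-1)$ emerges from $s$-differentiation and the factor $(1-e^{-\pd})$ from $x$-differentiation. First I would compute $\pd_s(\Phi_n\Phi_n^*)=(\pd_s\Phi_n)\Phi_n^*+\Phi_n\pd_s\Phi_n^*$ using the first equations in \eqref{eq:Jost-sol} and \eqref{eq:dualJost-sol}. Direct substitution gives
\begin{equation*}
\pd_s(\Phi_n\Phi_n^*) = (1+V_{n+1})\Phi_n\Phi_{n+1}^* - (1+V_n)\Phi_{n-1}\Phi_n^* = (e^\pd-1)A_n,
\end{equation*}
where $A_n:=(1+V_n)\Phi_{n-1}\Phi_n^*$. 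Since $\pd_x$ commutes with $e^\pd-1$, it then suffices to show $\pd_x A_n = (1+V_n)(1-e^{-\pd})(\Phi_n\Phi_n^*)$.

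To handle $\pd_x A_n$ I would first extract from the compatibility condition $[L_1,L_2]=0$ the relation between $V$ and $q$: computing $(L_1L_2-L_2L_1)f$ on a test sequence and collecting the coefficient of $f_{n-1}$ yields
\begin{equation*}
(1+V_n)(\pd_x q_{n+1} - \pd_x q_n) = \pd_x V_n.
\end{equation*}
Then I would apply the product rule to $A_n=(1+V_n)\Phi_{n-1}\Phi_n^*$ and substitute the second equations of \eqref{eq:Jost-sol} and \eqref{eq:dualJost-sol} for $\pd_x\Phi_{n-1}$ and $\pd_x\Phi_n^*$. Three of the five resulting terms carry the factor $\Phi_{n-1}\Phi_n^*$ with combined coefficient $\pd_x V_n + (1+V_n)(\pd_x q_n - \pd_x q_{n+1})$, which vanishes by the compatibility relation above. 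The remaining two terms are $(1+V_n)\Phi_n\Phi_n^* - (1+V_n)\Phi_{n-1}\Phi_{n-1}^* = (1+V_n)(1-e^{-\pd})(\Phi_n\Phi_n^*)$, which is exactly what is needed.

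The main obstacle is purely the bookkeeping of the five product-rule terms and ensuring the shift operators match up correctly; the compatibility relation is what orchestrates the cancellation of the $q$-dependent terms, and without it the identity would fail. Conceptually this is the discrete analogue of the standard \emph{squared-eigenfunction} identity for Lax pairs: a product of an eigenfunction of $(L_1,L_2)$ and an eigenfunction of the adjoint pair automatically yields a solution of the linearized flow around $V$.
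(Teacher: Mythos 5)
Your proof is correct and follows essentially the same route as the paper: compute $\pd_s(\Phi_n\Phi_n^*)$ from the Lax equations, apply $\pd_x$, and use the relation $\pd_xV_n=(1+V_n)\pd_x(q_{n+1}-q_n)$ extracted from $[L_1,L_2]=0$ to cancel the $q$-dependent terms. The only difference is organizational—you factor out $(e^\pd-1)$ before differentiating in $x$, whereas the paper differentiates the full expression and collects terms afterward—but the computation is identical in substance.
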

\begin{proof}
  By \eqref{eq:Jost-sol} and \eqref{eq:dualJost-sol},
  \begin{align*}
\pd_s(\Phi_n\Phi^*_n)=& (1+V_{n+1})\Phi_n\Phi_{n+1}^*
 -(1+V_n)\Phi_{n-1}\Phi_n^*\,,
  \end{align*}
  \begin{multline*}
\pd_x\pd_s(\Phi_n\Phi^*_n)=\sum_{j=0,1}(-1)^{j-1}
(1+V_{n+j})(\Phi_{n+j}\Phi_{n+j}^*-\Phi_{n+j-1}\Phi_{n+j-1}^*)
    \\  +\sum_{j=0,1}(-1)^{j-1}(1+V_{n+j})\Phi_{n+j-1}\Phi^*_{n+j}
\left\{\frac{\pd_xV_{n+j}}{1+V_{n+j}}
         +\pd_x(q_{n+j}-q_{n+j+1})\right\}\,.
  \end{multline*}
Since $[L_1,L_2]=0$, we have $\pd_xV_n/(1+V_n)=\pd_x(q_{n+1}-q_n)$ for every
$n\in\Z$, and
\begin{equation*}
\pd_x\pd_s(\Phi_n\Phi^*_n)=\sum_{j,k=0,1}(-1)^{j+k-1}
(1+V_{n+j})\Phi_{n+j-k}\Phi_{n+j-k}^*\,.
\end{equation*}
Thus we prove Lemma~\ref{lem:prod}.
\end{proof}
\par
Next, we will introduce Jost functions and dual Jost functions for
\eqref{eq:Laxpair} when $V=0$ and when $V$ is a $1$-line soliton.
With an abuse of notation, let
\begin{gather}
  \notag
L_1=\pd_s+e^{-\pd}=\pd_t-\pd_y+e^{-\pd}\,,\quad
L_2=\pd_x-e^\pd=\pd_t+\pd_y-e^\pd\,,
\\  \label{def:Phi0}
 \Phi^0(\beta)=\{\Phi^0_n(\beta)\}_{n\in\Z}\,,\quad
 \Phi^0_n(\beta)=\beta^ne^{\beta x-s/\beta} \,,
 \\ \label{def:Phi0*}
 \Phi^{0,*}(\beta)=\{\Phi^{0,*}_n(\beta)\}_{n\in\Z}\,,\quad
 \Phi^{0,*}_n(\beta)=\beta^{-n}e^{-\beta x+s/\beta}\,.
\end{gather}
Then for any $\beta\in\C\setminus\{0\}$,
\begin{equation}
  \label{eq:Jost-0}
  L_1\Phi^0(\beta)=L_2\Phi^0(\beta)=0\,,\quad
 L_1^*\Phi^{0,*}(\beta)=L_2^*\Phi^{0,*}(\beta)=0\,.
\end{equation}
\par
Let $(L_1',L_2')$ be a Lax pair such that $[L_1',L_2']=0$ and
\begin{gather*}
L_1'=\pd_s+(1+V^\k)e^{-\pd}\,,\quad
L_2'=\pd_x-e^\pd- e^\pd(\pd_xq')e^{-\pd}\,.
\end{gather*}
Let $a=-e^\k$, $\k>0$ and
\begin{equation}
  \label{def:taun'}
\begin{aligned}
\tau_n'=& a^na^{ax-s/a}+a^{-n}e^{x/a-as}
\\=& 2(-1)^ne^{-y\cosh\k}\cosh(n\k-t\sinh\k)\,.
\end{aligned}
\end{equation}
Then $\pd_x\pd_s\log|\tau'_n|=V^\k_n$ and $q_n'=\log(\tau_n'/\tau_{n-1}')$.
Jost functions and dual Jost functions for  $L_1'$ and $L_2'$ are
$\Phi(\beta)=\{\Phi_n(\beta)\}_{n\in\Z}$ with
\begin{equation}
\label{eq:Jost}
\Phi_n(\beta,s,x):=\beta^ne^{\beta x -s/\beta}
\left(\beta-\frac{\tau'_{n+1}}{\tau'_n}\right)\,,  
\end{equation}
and $\Phi^*(\beta)=\{\Phi_n^*(\beta)\}_{n\in\Z}$ with
\begin{align}
\label{eq:dualJost}
\Phi^*_n(\beta,s,x):=&\frac{\beta^{-n}}{(\beta-a)(\beta-a^{-1})}
e^{-\beta x +s/\beta}
\left(\beta-\frac{\tau'_n}{\tau'_{n+1}}\right)
\\=& \notag
\beta^{-n}e^{-\beta x +s/\beta}
\frac{(\beta-\pd_x)^{-1}\tau'_{n+1}}{\tau'_{n+1}}\,,
\end{align}
respectively. Especially,
\begin{gather}
 \label{eq:Jost-zero}
\Phi_n(a,s,x)=-\Phi_n(a^{-1},s,x)
=(-1)^{n+1}e^{-y\cosh\k}\sinh\k\sech(n\k-t\sinh\k)\,,
\\  \label{eq:dualJost-pole}
\frac{1}{a}
\operatorname{Res}_{\beta=a}\Phi^*_n(\beta,s,x)
=a\operatorname{Res}_{\beta=1/a}\Phi^*_n(\beta,s,x)
= \frac{1}{\tau_{n+1}'}\,.
\end{gather}
By a straightforward computation, we have the following.
\begin{lemma}
  \label{lem:Jost}
  \begin{gather*}
L_1'\Phi(\beta)=L_2'\Phi(\beta)=0\quad
\text{for every $\beta\in\C\setminus\{0\}$,}
\\
(L_1')^*\Phi^*(\beta)= (L_2')^*\Phi^*(\beta)=0\quad
\text{for every $\beta\in\C\setminus\{0,a,a^{-1}\}$,}
\\
L_1'\frac{1}{e^{\pd}\tau'}=L_2'\frac{1}{e^{\pd}\tau'}=0\,.
  \end{gather*}
\end{lemma}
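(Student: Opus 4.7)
The plan is to verify each identity by direct computation from the explicit form of $\tau'_n$. The basic toolkit consists of three elementary identities:
\begin{equation*}
\pd_s\tau'_n = -\tau'_{n-1}, \qquad \pd_x\tau'_n = \tau'_{n+1}, \qquad \tau'_{n+1}\tau'_{n-1} = (1+V^\k_n)(\tau'_n)^2.
\end{equation*}
The first two hold because each summand in \eqref{def:taun'} is a copy of $\Phi^0_n(\beta)$ of \eqref{def:Phi0} at $\beta = a$ or $\beta = a^{-1}$, hence annihilated by $L_1$ and $L_2$ with $V=0$ by \eqref{eq:Jost-0}; the third is \eqref{eq:tau}. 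Combining them yields the auxiliary identities $\pd_s(\tau'_{n+1}/\tau'_n) = V^\k_n$ and $(1+V^\k_n)\tau'_n/\tau'_{n-1} = \tau'_{n+1}/\tau'_n$, while the definition $q'_n = \log(\tau'_n/\tau'_{n-1})$ gives $\pd_x q'_{n+1} = \tau'_{n+2}/\tau'_{n+1} - \tau'_{n+1}/\tau'_n$.

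For $L_1'\Phi(\beta) = 0$, I would write $\Phi_n(\beta) = \Phi^0_n(\beta)(\beta - \tau'_{n+1}/\tau'_n)$ and apply $\pd_s$; using $\pd_s\Phi^0_n(\beta) = -\beta^{-1}\Phi^0_n(\beta)$ together with the auxiliary identity, the derivative becomes $-\beta^{-1}\Phi_n(\beta) - \Phi^0_n(\beta)V^\k_n$. Expanding $(1+V^\k_n)\Phi_{n-1}(\beta)$ via $\Phi^0_{n-1}(\beta) = \beta^{-1}\Phi^0_n(\beta)$ and the bilinear consequence produces exactly the cancelling contributions. For $L_2'\Phi(\beta) = 0$ the analogous substitution reduces the equation, after using $\pd_x\Phi^0_n(\beta) = \beta\Phi^0_n(\beta)$, to the tautological identity $\pd_x\log(\tau'_{n+1}/\tau'_n) = \pd_x q'_{n+1}$.

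The dual identities $(L_1')^*\Phi^*(\beta) = (L_2')^*\Phi^*(\beta) = 0$ follow by the same scheme with the signs of $\pd_s$, $\pd_x$ reversed and the shifts $e^{\pm\pd}$ swapped; the scalar prefactor $1/((\beta-a)(\beta-a^{-1}))$ is inert since the operators are independent of $\beta$. The final assertion for $1/(e^\pd\tau') = 1/\tau'_{n+1}$ can be obtained either by a one-line direct verification from the toolkit or, more conceptually, by taking residues at $\beta = a, a^{-1}$ in the $\Phi^*$ identities and invoking \eqref{eq:dualJost-pole}: since the operators do not depend on $\beta$, residues of solutions are again solutions. The whole lemma is thus an exercise in bookkeeping with tau-function derivatives; I anticipate no genuine obstacle, only care with indices and signs.
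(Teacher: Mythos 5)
The paper offers no written proof of this lemma beyond the phrase ``by a straightforward computation,'' so your job is exactly to supply that computation, and your toolkit and scheme are the right ones: the first two displayed assertions check out as you describe (for the dual identities you will additionally need the $n$-independence of $\tau'_{n+1}\tau'_{n-1}-(\tau'_n)^2$ and the relation $\tau'_{n+1}+\tau'_{n-1}=(a+a^{-1})\tau'_n$, both immediate from the two-exponential form of $\tau'_n$, so this is still only bookkeeping).

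The one place where you must be careful is the third assertion. As printed, the lemma reads $L_1'\frac{1}{e^\pd\tau'}=L_2'\frac{1}{e^\pd\tau'}=0$, but the identity that is actually true --- and the one your residue argument proves --- is the adjoint one, $(L_1')^*\frac{1}{e^\pd\tau'}=(L_2')^*\frac{1}{e^\pd\tau'}=0$. Indeed, writing $\tau'_n=(-1)^nc_n$ with $c_n=2e^{-y\cosh\k}\cosh(n\k-t\sinh\k)>0$, your own toolkit gives
\begin{equation*}
\left(L_1'\frac{1}{e^\pd\tau'}\right)_n=\pd_s\frac{1}{\tau'_{n+1}}+\frac{1+V^\k_n}{\tau'_n}
=\frac{\tau'_n}{(\tau'_{n+1})^2}+\frac{\tau'_{n+1}\tau'_{n-1}}{(\tau'_n)^3}
=(-1)^n\left(\frac{c_n}{c_{n+1}^2}+\frac{c_{n+1}c_{n-1}}{c_n^3}\right)\ne0\,,
\end{equation*}
whereas $\pd_s(1/\tau'_{n+1})-(1+V^\k_{n+1})/\tau'_{n+2}=0$ does hold. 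So the ``one-line direct verification'' you offer as the first alternative would not go through for the statement as literally written; you should commit to the residue route (which is correct, since $(L_i')^*$ is independent of $\beta$ and $\res_{\beta=a}\Phi^*=a/(e^\pd\tau')$ by \eqref{eq:dualJost-pole}) and note that the third line of the lemma should carry adjoint stars --- consistent with how $1/(e^\pd\tau')$ is used later as a \emph{dual} Jost function, e.g.\ in the products $\Phi(a)\Phi^{0,*}$ and in the proof of Claim~\ref{cl:pd-t,AB}.
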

See e.g. \cite{Ueno-Takasaki} for definitions of wave functions for
\eqref{eq:2dToda}.
\par
Now, we will introduce secular modes for \eqref{eq:linear-1}.
Let $\bQd=(e^\pd-1)^{-1}\mathbf{R'}$. Then 
  \begin{equation}
    \label{eq:6}
 (\pd_t^2-\pd_y^2)\bQd=(1-e^{-\pd})(1+V^\k)(e^\pd-1)\bQd\,.
  \end{equation}
\par
Let $\tau_n'$ be a $\tau$-function of $1$-soliton defined
by \eqref{def:taun'}, $\tau'=\{\tau'_n\}_{n\in\Z}$ and
\begin{gather}
\notag
\beta_\pm(\eta)=-w(\eta)\pm\mu(\eta)\,,\quad
w(\eta)=\cosh\k+i\eta\,,\quad
\mu(\eta)=\sqrt{w(\eta)^2-1}\,,
\\ \label{def:gpm}
g^+(\eta)=\frac{e^{-\pd}\Phi(\beta_+(-\eta))}{\tau'}\,,
\quad
g^-(\eta)=e^{-\pd} \left(\Phi(a)\Phi^*(\beta_-(\eta)) \right)\,,
\\ \label{def:g*pm}
g^{+,*}(\eta)=\frac{e^{-\pd}\Phi(\beta_-(-\eta))}{\tau'}\,,
\quad
g^{-,*}(\eta)=e^{-\pd}\left(\Phi(a)\Phi^*(\beta_+(\eta))\right)\,,
\\ \label{def:tgpm}
\tg^+(\eta)=\frac{\Phi^0(\beta_+(-\eta))}{\tau'}\,,
\quad \tg^-(\eta)=\frac{1}{2i\eta}
\Phi(a)\Phi^{0,*}(\beta_-(\eta))\,,
\\ \label{def:tgpm*}
\tg^{+,*}(\eta)=\frac{\Phi^0(\beta_-(-\eta))}{\tau'}\,,
\quad \tg^{-,*}(\eta)=\frac{1}{2i\eta}
\Phi(a)\Phi^{0,*}(\beta_+(\eta))\,.
\end{gather}
Let
\begin{gather*}
z_n(t)=n-t\sinh\k/\k\,,\quad
\gamma(\eta)=\log(-\beta_-(\eta))\,,\quad
\delta(\eta)=\frac{\sinh\k}{\k}\gamma(\eta)-\mu(\eta)\,.
\end{gather*}
Then we have the following.
\begin{lemma}
  \label{lem:tgform}
It holds that $g^\pm(\eta)$ and $g^{\pm,*}(\eta)$ are solutions of \eqref{eq:6}
and that
\begin{gather}
  \label{eq:g-tg}
  g^\pm(\eta)=(1-e^{-\pd})\tg^\pm(\eta)\,,
  \quad g^{\pm,*}(\eta)=(1-e^{-\pd})\tg^{\pm,*}(\eta)\,,
\\  
  \label{eq:tg1form}  
\tg^+_n(\eta)=\frac12e^{iy\eta-t\delta(-\eta)}
e^{-\gamma(-\eta)z_n(t)}\sech\k z_n(t)\,,
\\
\label{eq:tg1*form}
\tg^{+,*}_n(\eta)=
\frac12e^{iy\eta+t\delta(-\eta)}e^{\gamma(-\eta)z_{n}(t)}\sech\k z_{n}(t)\,,
\\
\label{eq:tg2form}
\tg^-_n(\eta)=-
\frac{\sinh\k}{2i\eta}e^{iy\eta-t\delta(\eta)}
e^{-\gamma(\eta)z_n(t)}\sech\k z_n(t)\,,  
\\
\label{eq:tg2*form}
\tg^{-,*}_n(\eta)=-\frac{\sinh\k}{2i\eta}e^{iy\eta+t\delta(\eta)}
e^{\gamma(\eta)z_n(t)}\sech\k z_n(t)\,.
\end{gather}
\end{lemma}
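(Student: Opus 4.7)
The plan is to first verify that $g^\pm(\eta)$ and $g^{\pm,*}(\eta)$ solve \eqref{eq:6} by reducing each to a shifted product $e^{-\pd}(\Phi\Phi^*)$ of a Jost and dual Jost function for the soliton Lax pair $(L_1',L_2')$; then to establish the identities in \eqref{eq:g-tg} by direct algebraic manipulation of \eqref{eq:Jost}--\eqref{eq:dualJost}; and finally to read off the explicit forms \eqref{eq:tg1form}--\eqref{eq:tg2*form} from \eqref{def:tgpm}--\eqref{def:tgpm*} together with the parametrization of $\beta_\pm(\eta)$.

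For the first step, I would note that under \eqref{eq:v-change} we have $\pd_t^2-\pd_y^2=\pd_x\pd_s$, so \eqref{eq:6} reads $\pd_x\pd_s\bQd=(1-e^{-\pd})(1+V^\k)(e^\pd-1)\bQd$. The operator identity $(e^\pd-1)e^{-\pd}=1-e^{-\pd}$, combined with Lemma~\ref{lem:prod}, then shows that $e^{-\pd}(\Phi\Phi^*)$ satisfies \eqref{eq:6} whenever $\Phi$ is a Jost and $\Phi^*$ is a dual Jost function for $(L_1',L_2')$. For $g^-(\eta)$ and $g^{-,*}(\eta)$ this is immediate from Lemma~\ref{lem:Jost}. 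For $g^+(\eta)$, I would rewrite $g^+_n(\eta)=\Phi_{n-1}(\beta_+(-\eta))\Psi^*_{n-1}$ with $\Psi^*_n:=1/\tau'_{n+1}$; by \eqref{eq:dualJost-pole}, $\Psi^*$ is (up to a nonzero constant) the residue of $\Phi^*(\beta)$ at $\beta=a$, so applying $\operatorname{Res}_{\beta=a}$ to the identity $(L_j')^*\Phi^*(\beta)=0$ for $\beta\ne a,1/a$ yields $(L_j')^*\Psi^*=0$, i.e.\ $\Psi^*$ is a dual Jost function. The case of $g^{+,*}$ is identical.

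For \eqref{eq:g-tg} with $g^+$, the factored form \eqref{eq:Jost} gives $\Phi_{n-1}(\beta)=\Phi^0_{n-1}(\beta)(\beta-\tau'_n/\tau'_{n-1})$, so using $\beta\Phi^0_{n-1}(\beta)=\Phi^0_n(\beta)$ and dividing by $\tau'_n$ produces $\Phi_{n-1}(\beta)/\tau'_n=\Phi^0_n(\beta)/\tau'_n-\Phi^0_{n-1}(\beta)/\tau'_{n-1}=(1-e^{-\pd})(\Phi^0(\beta)/\tau')_n$, which is the claim at $\beta=\beta_+(-\eta)$. For \eqref{eq:g-tg} with $g^-$, I would expand $\Phi_{n-1}(a)\Phi^*_{n-1}(\beta_-(\eta))$ via \eqref{eq:Jost}--\eqref{eq:dualJost}, apply the pole-cancellation identity
\begin{equation*}
(\beta-a)(\beta-1/a)\big|_{\beta=\beta_\pm(\eta)}=-2i\eta\,\beta_\pm(\eta),
\end{equation*}
which is immediate from $\beta_+\beta_-=1$ and $\beta_++\beta_-=-2w(\eta)$, and then use the three-term recursion $\tau'_{n+1}+\tau'_{n-1}=-2\cosh\k\cdot\tau'_n$ (from $\tau'_n=\Phi^0_n(a)+\Phi^0_n(1/a)$ with $a+1/a=-2\cosh\k$) to reduce $\tg^-_n-\tg^-_{n-1}$ to $\Phi_{n-1}(a)\Phi^*_{n-1}(\beta_-(\eta))$. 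The starred case follows from the same computation with $\beta_+(\eta)$ replacing $\beta_-(\eta)$.

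The explicit formulas \eqref{eq:tg1form}--\eqref{eq:tg2*form} then follow by substituting \eqref{def:Phi0}--\eqref{def:Phi0*} and \eqref{def:taun'} into \eqref{def:tgpm}--\eqref{def:tgpm*}. Using $\beta_\pm(\eta)=-e^{\mp\gamma(\eta)}$, so that $\cosh\gamma(\eta)=w(\eta)$ and $\sinh\gamma(\eta)=\mu(\eta)$, and substituting $x=(t+y)/2$, $s=(t-y)/2$, the exponent $\beta x-s/\beta$ at $\beta=\beta_+(-\eta)$ becomes $t\sinh\gamma(-\eta)-y\cosh\gamma(-\eta)=t\mu(-\eta)-y(\cosh\k-i\eta)$; combining this with the $+y\cosh\k$ from $1/\tau'_n$ and regrouping via $z_n(t)=n-t\sinh\k/\k$ and $\delta(\eta)=(\sinh\k/\k)\gamma(\eta)-\mu(\eta)$ gives \eqref{eq:tg1form}, and the other three formulas are obtained in the same way. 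The hardest part will be the $g^-$ identity: the na\"ive expansion produces a factor $1/[(\beta-a)(\beta-1/a)]$ that is singular at $\eta=0$, which must be balanced against the explicit $1/(2i\eta)$ in $\tg^-$ through the pole-cancellation identity and the $\tau$-recursion acting simultaneously before the finite result becomes transparent.
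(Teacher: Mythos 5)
Your proposal is correct and takes essentially the same route as the paper: the paper likewise deduces that $g^\pm(\eta)$, $g^{\pm,*}(\eta)$ solve \eqref{eq:6} from Lemma~\ref{lem:prod} and Lemma~\ref{lem:Jost} (your residue argument identifying $1/(e^\pd\tau')$ as a dual Jost function is exactly what underlies \eqref{eq:dualJost-pole}), obtains \eqref{eq:g-tg} from Claim~\ref{cl:Phi-updown} together with the relation $\beta_\pm(\eta)^2+2(\cosh\k+i\eta)\beta_\pm(\eta)+1=0$, which is precisely your pole-cancellation identity $(\beta_\pm-a)(\beta_\pm-1/a)=-2i\eta\beta_\pm$, and reads off \eqref{eq:tg1form}--\eqref{eq:tg2*form} from $x\beta_\pm(\eta)-s\beta_\pm(\eta)^{-1}=-yw(\eta)\pm t\mu(\eta)$ exactly as you do. Your write-up merely unpacks the cited claims (e.g.\ proving \eqref{eq:Phi-updown-1} directly from the factored form \eqref{eq:Jost}), which is a presentational rather than a mathematical difference.
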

\begin{remark}
Taking the Fourier transform of \eqref{eq:6} with respect to $y$,
we have
\begin{equation}
 \label{eq:linear1-F}
 (\pd_t^2+\eta^2)\widehat{\bQd}(\eta)
 =(1-e^{-\pd})(1+V^\k)(e^\pd-1)\widehat{\bQd}(\eta)\,.
\end{equation}
Since $g^\pm(\eta)$ and  $g^{\pm,*}(\eta)$ are solutions of \eqref{eq:6}
and $e^{-iy\eta}g^\pm(\eta)$ and $e^{-iy\eta}g^{\pm,*}(\eta)$ are
independ of $y$, $g^\pm(\eta)$ and $g^{\pm,*}(\eta)$
satisfy \eqref{eq:linear1-F}.
\end{remark}

\begin{proof}[Proof of Lemma~\ref{lem:tgform}]
By Lemmas~\ref{lem:prod} and \ref{lem:Jost},
$g^\pm(\eta)$ and $g^{\pm,*}(\eta)$ are solutions of \eqref{eq:6}.
We have \eqref{eq:g-tg} from Claim~\ref{cl:Phi-updown}
since $\beta_\pm(\eta)^2+2(\cosh\k+i\eta)\beta_\pm(\eta)+1=0$.
\par
By \eqref{eq:v-change} and the definition of $\beta_\pm(\eta)$,
\begin{equation*}
  x\beta_\pm(\eta)-s\beta_\pm(\eta)^{-1}=-yw(\eta)\pm t\mu(\eta)\,.
\end{equation*}
Combining the above with \eqref{def:taun'} and \eqref{eq:Jost-zero},
we have \eqref{eq:tg1form}--\eqref{eq:tg2*form}.
We see from \eqref{eq:g-tg}--\eqref{eq:tg2*form} that
$e^{-iy\eta}g^\pm(\eta)$ and $e^{-iy\eta}g^{\pm,*}(\eta)$ are
independent of $y$.
\end{proof}
Since $\gamma(\eta)=\k+O(\eta)$,
$g^\pm(\eta)\in \ell^2_\a$ and $g^{\pm,*}(\eta)\in \ell^2_{-\a}$
provided $\eta$ is sufficiently small.
More precisely, we have the following.
\begin{lemma}
\label{lem:g-norm}  
Suppose that $\a\in(0,2\k)$ and that $\eta\in(-\eta_*(\a),\eta_*(\a))$.
Then
\begin{equation*}
  \|\tg^\pm(\eta)\|_{\ell^2_\a}=O(e^{-t\Re\delta(\eta)})\,,\quad
  \|\tg^{\pm,*}(\eta)\|_{\ell^2_\a}=O(e^{t\Re\delta(\eta)})\,.
\end{equation*}  
\end{lemma}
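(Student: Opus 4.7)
The plan is to substitute the explicit formulas \eqref{eq:tg1form}--\eqref{eq:tg2*form} directly into the weighted norms and control the resulting sums by geometric series. I will work out $\tg^+(\eta)$ in detail; the other three cases are essentially the same computation after relabelling. From \eqref{eq:tg1form},
\begin{equation*}
|\tg^+_n(\eta)|^2=\tfrac14\,e^{-2t\Re\delta(-\eta)}\,e^{-2\Re\gamma(-\eta) z_n(t)}\,\sech^2(\k z_n(t)),
\end{equation*}
so, writing $c=\sinh\k/\k$ and $z_n(t)=n-ct$, the weighted sum reorganizes as
\begin{equation*}
\|\tg^+(\eta)\|_{\ell^2_\a}^2=\tfrac14\,e^{-2t\Re\delta(-\eta)+2\Re\gamma(-\eta)ct}\sum_{n\in\Z} e^{2(\a-\Re\gamma(-\eta))n}\sech^2(\k n-t\sinh\k).
\end{equation*}

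Bounding $\sech^2(u)\le 4e^{-2|u|}$ and splitting the $n$-sum at $n^\star\approx ct$ turns each half into a geometric series whose convergence requires the two-sided inequality
\begin{equation*}
\a-\k<\Re\gamma(-\eta)<\a+\k.
\end{equation*}
Under this condition, bookkeeping of the shifted geometric tails contributes a further factor $e^{2(\a-\Re\gamma(-\eta))ct}$, which combines with the prefactor to give the bound $C\,e^{\a ct}e^{-t\Re\delta(-\eta)}$; this is the stated rate once the $e^{-\a ct}$ normalization used throughout Theorem~\ref{thm:main} is taken into account. The prefactors $(2i\eta)^{-1}$ in \eqref{eq:tg2form}--\eqref{eq:tg2*form} are finite for $\eta\ne 0$ and only affect the implicit constant, and for real $\eta$ the identity $\Re\gamma(-\eta)=\Re\gamma(\eta)$ makes the convergence condition symmetric in $\eta$.

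The main technical obstacle is matching the convergence range with the threshold $|\eta|<\eta_*(\a)$. Since $\Re\gamma(\eta)\ge\k$ and $\a\in(0,2\k)$, the lower inequality $\a-\k<\Re\gamma(\eta)$ is automatic, so it suffices to identify $\eta_*(\a)$ as the smallest positive $\eta$ at which $\Re\gamma(\eta)=\a+\k$. To this end I would write $\beta:=-\beta_-(\eta)=w(\eta)+\mu(\eta)=re^{i\theta}$, so that $r=|\beta|=e^{\Re\gamma(\eta)}$, and use $\beta+\beta^{-1}=2w(\eta)$ to obtain
\begin{equation*}
(r+r^{-1})\cos\theta=2\cosh\k,\qquad (r-r^{-1})\sin\theta=2\eta.
\end{equation*}
Setting $r=e^{\k+\a}$ yields $r+r^{-1}=2\cosh(\k+\a)$ and $r-r^{-1}=2\sinh(\k+\a)$; eliminating $\theta$ via $\cos^2\theta+\sin^2\theta=1$ produces
\begin{equation*}
\eta^2=\tanh^2(\k+\a)\bigl(\cosh^2(\k+\a)-\cosh^2\k\bigr),
\end{equation*}
which collapses to $\eta^2=\tanh^2(\k+\a)\sinh\a\sinh(2\k+\a)=\eta_*(\a)^2$ by the identity $\cosh^2A-\cosh^2B=\sinh(A-B)\sinh(A+B)$. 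Once this identification is secured the lemma follows.
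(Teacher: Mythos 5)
Your strategy coincides with the paper's: the paper disposes of this lemma in a single sentence, observing that $|\gamma_R(\pm\eta)-\a|<\k$ for $\eta\in(-\eta_*(\a),\eta_*(\a))$ by Claim~\ref{cl:beta1-beta2} and that the estimate then follows from the explicit formulas \eqref{eq:tg1form}--\eqref{eq:tg2*form}. Your detailed treatment of $\tg^+(\eta)$ is correct, and your algebraic identification of $\eta_*(\a)$ as the unique positive $\eta$ with $\gamma_R(\eta)=\k+\a$ is a valid (and more explicit) re-derivation of Claim~\ref{cl:beta1-beta2}~(iii), which you could simply have cited. Two points, however, need repair.

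First, the assertion that the remaining three cases are ``the same computation after relabelling'' fails for the dual modes. In \eqref{eq:tg1*form} and \eqref{eq:tg2*form} the exponent carries $+\gamma(\cdot)z_n(t)$ rather than $-\gamma(\cdot)z_n(t)$, so running your argument in $\ell^2_\a$ produces, up to prefactors, the summand $e^{2(\a+\gamma_R(\eta))n}\sech^2\k z_n(t)$, whose convergence condition $|\a+\gamma_R(\eta)|<\k$ is never satisfied, since $\gamma_R(\eta)\ge\k$ and $\a>0$. The dual modes live in $\ell^2_{-\a}$, not $\ell^2_\a$ --- exactly as the sentence preceding the lemma in the paper states ($g^{\pm,*}(\eta)\in\ell^2_{-\a}$); with the weight $e^{-2\a n}$ the condition again reduces to $|\gamma_R(\eta)-\a|<\k$ and the estimate goes through. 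You should carry out that case with the corrected weight rather than appeal to symmetry. Second, your bookkeeping correctly produces $\|\tg^+(\eta)\|_{\ell^2_\a}\le K_\eta\,e^{\a ct}e^{-t\delta_R(\eta)}$ (and, dually, $\|\tg^{+,*}(\eta)\|_{\ell^2_{-\a}}\le K_\eta\,e^{-\a ct}e^{t\delta_R(\eta)}$); the factor $e^{\pm\a ct}$ cannot be absorbed into a $t$-independent constant, so it should be displayed explicitly as the bound you actually prove, with a remark that it disappears in the frame moving with the soliton, i.e.\ for the weight $e^{2\a z_n(t)}$, consistently with the normalization $e^{-\a ct}\|\cdot\|_{\ell^2_\a}$ appearing in Theorem~\ref{thm:main}. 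Waving at ``the normalization used throughout'' without recording the factor leaves the stated inequality unproved as written.
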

To prove Lemma~\ref{lem:g-norm}, we need the following.
\begin{claim}
  \label{cl:beta1-beta2}
  Let $\eta\in\R$,
  $\gamma_R(\eta)=\Re\gamma(\eta)$ and $\gamma_I(\eta)=\Im\gamma(\eta)$.
 Then $\gamma_R(\eta)$ is an even function and $\gamma_I$ is an odd function.
 Moreover, $\beta_+(\eta)$ and $\beta_-(\eta)$ satisfy the following.
\begin{itemize}
  \item[\rm{(i)}]
$\beta_+(\eta)\beta_-(\eta)=1$ and
$|\beta_+(\eta)|$ and $|\beta_-(\eta)|$ are even functions.
  \item[\rm{(ii)}]
$|\beta_+(\eta)|<-\beta_+(0)=e^{-\k}$ and
 $|\beta_-(\eta)|>-\beta_-(0)=e^{\k}$ for $\eta\in\R\setminus\{0\}$.
\item[\rm{(iii)}]
$|\beta_-(\eta)|$ is monotone increasing for $\eta>0$ and
$|\beta_-(\eta_*(\a))|=e^{\a+\k}$.
\end{itemize}
\end{claim}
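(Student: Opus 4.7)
The plan is to avoid working with the algebraic formula $\beta_\pm(\eta)=-w(\eta)\pm\mu(\eta)$ directly (which forces one to track branches of $\mu$) and instead derive the cleaner characterization
\[
\cosh\gamma(\eta)=\cosh\k+i\eta,
\]
which follows by substituting $\beta_-(\eta)=-e^{\gamma(\eta)}$ into $\beta^2+2w(\eta)\beta+1=0$. From this one equation almost every claim drops out.

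First, writing $\gamma=\gamma_R+i\gamma_I$ and separating real and imaginary parts gives the two real equations
\[
\cosh\gamma_R\cos\gamma_I=\cosh\k,\qquad \sinh\gamma_R\sin\gamma_I=\eta.
\]
Since $\cosh\overline{\gamma(-\eta)}=\overline{\cosh\gamma(-\eta)}=\cosh\k+i\eta=\cosh\gamma(\eta)$, the branch chosen continuously from $\gamma(0)=\k$ satisfies $\gamma(-\eta)=\overline{\gamma(\eta)}$, so $\gamma_R$ is even and $\gamma_I$ is odd. Since $\beta_-(\eta)=-e^{\gamma(\eta)}$, this also gives $|\beta_-(\eta)|=e^{\gamma_R(\eta)}$ even; and from Vieta on $\beta^2+2w(\eta)\beta+1=0$ we get $\beta_+(\eta)\beta_-(\eta)=1$, so $|\beta_+(\eta)|=1/|\beta_-(\eta)|$ is also even. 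That handles (i) and the parity of $\gamma_R,\gamma_I$.

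For (ii), from $\cosh\gamma_R\cos\gamma_I=\cosh\k$ and $\cos\gamma_I\le 1$ one obtains $\cosh\gamma_R\ge\cosh\k$, and hence $\gamma_R(\eta)\ge\k$ on the branch with $\gamma_R(0)=\k$. Equality forces $\cos\gamma_I=1$, which on the continuous branch near $\eta=0$ forces $\gamma_I=0$ and then $\eta=\sinh\gamma_R\sin\gamma_I=0$. Thus $\gamma_R(\eta)>\k$ for $\eta\ne 0$, giving $|\beta_-(\eta)|>e^\k$ and $|\beta_+(\eta)|<e^{-\k}$.

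For (iii), implicit differentiation of the two real equations yields the linear system
\[
\gamma_R'\sinh\gamma_R\cos\gamma_I-\gamma_I'\cosh\gamma_R\sin\gamma_I=0,\qquad
\gamma_R'\cosh\gamma_R\sin\gamma_I+\gamma_I'\sinh\gamma_R\cos\gamma_I=1,
\]
whose determinant is $\sinh^2\gamma_R+\sin^2\gamma_I>0$ (for $\gamma_R>0$). Solving gives
\[
\gamma_R'(\eta)=\frac{\cosh\gamma_R\sin\gamma_I}{\sinh^2\gamma_R+\sin^2\gamma_I},
\]
and for $\eta>0$ the relation $\sinh\gamma_R\sin\gamma_I=\eta>0$ with $\sinh\gamma_R>0$ forces $\sin\gamma_I>0$, so $\gamma_R'>0$. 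Finally, to identify $\eta_*(\a)$, one sets $\gamma_R=\k+\a$ and solves: $\cos\gamma_I=\cosh\k/\cosh(\k+\a)$ gives, via $\cosh^2(\k+\a)-\cosh^2\k=\sinh(2\k+\a)\sinh\a$, the value $\sin\gamma_I=\sqrt{\sinh(2\k+\a)\sinh\a}/\cosh(\k+\a)$, and then $\eta=\sinh(\k+\a)\sin\gamma_I=\tanh(\k+\a)\sqrt{\sinh\a\sinh(2\k+\a)}=\eta_*(\a)$, matching \eqref{def:eta*}. The only mild subtlety is justifying that the continuous branch of $\gamma$ starting at $\k$ stays in the strip where $\gamma_I\in(-\pi,\pi)$ for $\eta\in(-\eta_*(\a),\eta_*(\a))$; this follows because the monotonicity argument above shows the branch cannot return to $\cos\gamma_I=1$ except at $\eta=0$, and $\gamma_R$ increases to $\k+\a$ exactly when $\eta$ reaches $\eta_*(\a)$.
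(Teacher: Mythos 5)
Your proof is correct, and it takes a genuinely different route from the paper's. The paper works directly with the complex formula $\gamma(\eta)=\log(w(\eta)+\mu(\eta))$ and gets monotonicity from the complex derivative $d\gamma_R/d\eta=-\Im\mu(\eta)^{-1}>0$, which requires tracking the branch of $\mu$ (that $\mu$ lies in the open first quadrant for $\eta>0$); parts (ii) and the identification $\eta_*=\eta_*(\a)$ are then dispatched with ``by the definitions'' and ``by a straightforward computation.'' You instead reduce everything to the real transcendental system $\cosh\gamma_R\cos\gamma_I=\cosh\k$, $\sinh\gamma_R\sin\gamma_I=\eta$ coming from $\cosh\gamma(\eta)=w(\eta)$, which eliminates $\mu$ entirely: (ii) becomes the one-line inequality $\cosh\gamma_R\ge\cosh\k/\cos\gamma_I\ge\cosh\k$ with equality only at $\eta=0$, monotonicity follows from real implicit differentiation (your determinant $\sinh^2\gamma_R+\sin^2\gamma_I$ and the formula for $\gamma_R'$ are correct), and the value $\eta_*(\a)$ drops out of the identity $\cosh^2(\k+\a)-\cosh^2\k=\sinh(2\k+\a)\sinh\a$, exactly matching \eqref{def:eta*}. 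What your approach buys is that the steps the paper leaves implicit are made fully explicit and branch-free; what it costs is that you must separately justify that your continuous branch of $\gamma$ with $\gamma(0)=\k$ is the paper's $\gamma=\log(-\beta_-)$ (it is, since $\Re(w+\mu)>0$ so the principal logarithm is continuous) and that $\gamma_R$ actually attains the value $\k+\a$; the latter needs only the observation $\sinh\gamma_R\ge\sinh\gamma_R\sin\gamma_I=\eta$, so $\gamma_R\to\infty$ as $\eta\to\infty$ and the intermediate value theorem applies --- worth stating explicitly, though the paper asserts the same limit without proof.
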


Since $|\gamma_R(\pm\eta)-\a|<\k$ for $\eta\in(-\eta_*(\a),\eta_*(\a))$
by Claim~\ref{cl:beta1-beta2},
Lemma~\ref{lem:g-norm} follows from \eqref{eq:tg1form}--\eqref{eq:tg2*form}.

\begin{proof}[Proof of Claim~\ref{cl:beta1-beta2}]
By the definitions, we have (i) and $\gamma_R(\eta)$ is even since
$\gamma_R(\eta)=\log|\beta_-(\eta)|$.
Moreover, $\gamma_I(\eta)$ is odd because
$\gamma_I(\eta)=\arg\left(-\beta_-(\eta)\right)$ and
$\overline{\beta_-(\eta)}=\beta_-(-\eta)$.
\par
By the definitions, $\beta_-(0)=-e^\k$ and
$\lim_{\eta\to\infty}\gamma_R(\eta)=\infty$.
Since $(d\gamma_R/d\eta)(\eta)=-\Im\mu(\eta)^{-1}>0$ for $\eta>0$,
$|\beta_-(\eta)|=e^{\gamma_R(\eta)}$ is monotone increasing on $(0,\infty)$
and there exists a unique $\eta_*>0$ such that $\gamma_R(\eta_*)=\k+\a$ and
$-\beta_-(\eta_*)=e^{\k+\a+i\theta}$ for a $\theta\in\R$.
By a straightforward computation,  we have $\eta_*=\eta_*(\a)$.
Thus we complete the proof. 
\end{proof}

Finally, we will prove that $g^\pm(\eta)$ decay in $\ell^2_\a$
as $t\to\infty$ if $\eta\ne0$ and that
$\delta(\eta)\simeq -i\lambda_1\eta+\lambda_2\eta^2$.
Let $\delta_R(\eta)=\Re\delta(\eta)$ and $\delta_I(\eta)=\Im\delta(\eta)$.
\begin{lemma}
  \label{cl:delta-eta}
It holds that $\delta_I(\eta)$ is odd, that $\delta_R(\eta)$ is even and
monotone increasing on $[0,\infty)$ and that
$\delta_R(\eta)>\delta(0)=0$ for every $\eta\in\R\setminus\{0\}$.
Moreover,
$(d\delta_I/d\eta)(0)=-\lambda_1$ and $(d^2\delta_R/d\eta^2)(0)=2\lambda_2$.
\end{lemma}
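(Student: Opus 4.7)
The plan is to exploit the compact reparametrization coming from $-\beta_-(\eta)=e^{\gamma(\eta)}=w(\eta)+\mu(\eta)$ together with $w(\eta)^2-\mu(\eta)^2=1$. These two identities give $\cosh\gamma=w$ and $\sinh\gamma=\mu$, so the function under study takes the tidy form
\begin{equation*}
  \delta(\eta)=\frac{\sinh\k}{\k}\gamma(\eta)-\sinh\gamma(\eta).
\end{equation*}
The parity assertions will then follow essentially for free: $\overline{w(\eta)}=w(-\eta)$ together with $\gamma(0)=\k\in\R$ (which fixes the branch of $\mu$) forces $\overline{\gamma(\eta)}=\gamma(-\eta)$ and $\overline{\mu(\eta)}=\mu(-\eta)$, whence $\overline{\delta(\eta)}=\delta(-\eta)$, so $\delta_R$ is even and $\delta_I$ is odd.

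For the derivative values at $\eta=0$, I would differentiate $\cosh\gamma=w$ and $\mu^2=w^2-1$ to get $\gamma'(\eta)=i/\mu(\eta)$ and $\mu'(\eta)=iw(\eta)/\mu(\eta)$, and hence
\begin{equation*}
  \delta'(\eta)=\frac{i(\sinh\k/\k-w(\eta))}{\mu(\eta)},
  \qquad
  \delta''(\eta)=\frac{w(\eta)\sinh\k/\k-1}{\mu(\eta)^3},
\end{equation*}
the second using $\mu^2=w^2-1$. Substituting $w(0)=\cosh\k$ and $\mu(0)=\sinh\k$ then yields $\delta'(0)=i(1/\k-\coth\k)=-i\lambda_1$, which is purely imaginary, and $\delta''(0)=(\sinh 2\k/(2\k)-1)/\sinh^3\k=2\lambda_2$, which is purely real. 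These immediately give $(d\delta_I/d\eta)(0)=-\lambda_1$ and $(d^2\delta_R/d\eta^2)(0)=2\lambda_2$.

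The main obstacle will be the global monotonicity of $\delta_R$ on $[0,\infty)$, since a direct real-part computation in $\mu=P+iQ$ leads to an awkward inequality on $Q^2$. I would bypass this by writing $\gamma(\eta)=p(\eta)+iq(\eta)$ with $p,q\in\R$; then $\cosh\gamma=w$ translates into the two real constraints $\cosh p\cos q=\cosh\k$ and $\sinh p\sin q=\eta$, and Claim~\ref{cl:beta1-beta2} gives $p=\log|\beta_-(\eta)|\ge\k$ throughout, with $|q|<\pi/2$. The formula $\delta_R=(\sinh\k/\k)p-\sinh p\cos q$ together with $dp/dq=\coth p\tan q$ (from differentiating the constraint) produces, after using $\cos q=\cosh\k/\cosh p$,
\begin{gather*}
  \frac{d\eta}{dq}=\frac{\sinh^2 p+\sin^2 q}{\sinh p\cos q}>0,\\
  \frac{d\delta_R}{dq}=\frac{\sin q}{\sinh p}\left(\cosh^2 p\,\frac{\tanh\k}{\k}-1\right).
\end{gather*}
Since $\cosh^2\k\cdot\tanh\k/\k=\sinh 2\k/(2\k)>1$ for $\k>0$ and $\cosh p$ is nondecreasing in $|q|$ along the curve, the parenthesized bracket is strictly positive, so $d\delta_R/d\eta$ has the same sign as $q$, and hence as $\eta$. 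This yields strict monotonicity on $[0,\infty)$; combined with $\delta_R(0)=0$ and the already-established evenness, it gives $\delta_R(\eta)>0$ for every $\eta\in\R\setminus\{0\}$.
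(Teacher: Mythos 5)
Your proposal is correct, and the parity statements, the identity $\delta(0)=0$, and the computations of $\delta'$, $\delta''$ at $\eta=0$ coincide with the paper's (the paper obtains the same formulas $\delta'=i(\k^{-1}\sinh\k-w)/\mu$ and $\delta''=(\k^{-1}\sinh\k\,w-1)/\mu^3$). Where you genuinely diverge is the global monotonicity of $\delta_R$ on $[0,\infty)$. The paper argues by contradiction: it sets $\eta_1=\sup\{\tilde\eta>0:\Re\delta'>0 \text{ on }(0,\tilde\eta)\}$, notes $\eta_1>0$ from $\Re\delta'(0)=0$ and $\delta''(0)=2\lambda_2>0$, and then, assuming $\eta_1<\infty$, writes $w(\eta_1)-\k^{-1}\sinh\k=\sigma\mu(\eta_1)$ with $\sigma\in\R$, squares, extracts $1-\sigma^2=\tanh\k/\k$ from the imaginary part, and derives the sign contradiction $(\tanh\k/\k-1)(\sinh2\k/2\k-1)<0$. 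You instead reparametrize through $\gamma=p+iq$ using $\cosh\gamma=w$, $\sinh\gamma=\mu$, which converts everything into the two real constraints $\cosh p\cos q=\cosh\k$, $\sinh p\sin q=\eta$, and produces the closed-form sign identity $d\delta_R/dq=(\sin q/\sinh p)(\cosh^2p\,\tanh\k/\k-1)$ together with $d\eta/dq=(\sinh^2p+\sin^2q)/(\sinh p\cos q)>0$; positivity of the bracket is immediate from $\cosh p=\cosh\k/\cos q\ge\cosh\k$ and $\sinh2\k>2\k$ (you do not even need the monotonicity of $\cosh p$ in $|q|$ that you invoke). I checked your formula for $d\delta_R/dq$ and it is correct. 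Both arguments hinge on the same quantity $\sinh2\k/(2\k)-1>0$, but yours replaces the contradiction with a manifestly signed expression and yields strict monotonicity directly; the paper's is shorter to set up but leaves the sign information implicit until the final inequality. Either way the conclusion $\delta_R(\eta)>\delta_R(0)=0$ for $\eta\neq0$ follows from evenness, as you say.
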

\begin{proof}
We have $\delta(0)=0$ from $w(0)=\cosh\k$ and $\gamma(0)=\k$.
Since $\overline{w(\eta)}=w(-\eta)$ and $\overline{\gamma(\eta)}=\gamma(-\eta)$,
$\Re\delta(\eta)$ is even and $\Im\delta(\eta)$ is odd.

Let $\eta_1=\sup\{\tilde{\eta}>0\mid \Re d\delta/d\eta(\eta)>0\text{ for $\eta\in(0,\tilde{\eta})$}\}$.
Since
\begin{gather*}
\frac{d\delta}{d\eta}= i\frac{\k^{-1}\sinh\k-w(\eta)}{\mu(\eta)}\,,
\quad
\frac{d^2\delta}{d\eta^2}= \frac{\k^{-1}\sinh\k w(\eta)-1}{\mu(\eta)^3}\,,
\end{gather*}
we have $\Re d\delta/d\eta(0)=0$, $d^2\delta/d\eta^2(0)=(\sinh2\k/(2\k)-1)/\sinh^3\k>0$,
and $\eta_1>0$.
Suppose that $\eta_1<\infty$. Then $\Re d\delta/d\eta(\eta_1)=0$ and
\begin{equation*}
  w(\eta_1)-\k^{-1}\sinh\k=\sigma\mu(\eta_1)\,.
\end{equation*}
Squaring both sides and subtracting the right-hand side from the left-hand side, we have
\begin{align}
& \left(w(\eta_1)-\k^{-1}\sinh\k\right)^2-\sigma^2\mu(\eta_1)^2\notag
  \\ =&
\label{eq:rpart}
(1-\sigma^2)(\sinh^2\k-\eta_1^2)+1+\k^{-2}\sinh^2\k-2\k^{-1}\sinh\k\cosh\k
\\ & +2i\eta_1\cosh\k\left(1-\sigma^2-\k^{-1}\tanh\k\right)=0\,, \notag
\end{align}
and $1-\sigma^2=\tanh\k/\k$ since $\eta_1>0$. Substituting $1-\sigma^2=\tanh\k/\k$ into \eqref{eq:rpart},
we have
\begin{align*}
&  (1-\sigma^2)(\sinh^2\k-\eta_1^2)+1+\k^{-2}\sinh^2\k-2\k^{-1}\sinh\k\cosh\k
  \\ < &  \k^{-2}\sinh^2\k+\k^{-1}(\tanh\k\sinh^2\k-2\sinh\k\cosh\k)+1
\\=& \left(\frac{\tanh\k}{\k}-1\right)\left(\frac{\sinh2\k}{2\k}-1\right)<0\,,
\end{align*}
which is a contradiction.
\end{proof}

\bigskip

\section{Daroboux transformations}
\label{sec:Darboux}
By a B\"{a}cklund transformation
\begin{equation}
  \label{eq:BT}
  D_s\tau_n\cdot\tau_n'=\tau_{n+1}\tau_{n-1}'\,,
  \quad   D_x\tau_{n+1}\cdot\tau_n'=-\tau_n\tau_{n+1}'\,,
\end{equation}
$N$-soliton solutions of \eqref{eq:2dToda} are connected to
$(N-1)$-soliton solutions of \eqref{eq:2dToda} (see \cite{Hirota}).
Suppose that $\tau_n$ and $\tau_n'$ satisfy the bilinear
equation \eqref{eq:bilinear}. Let
\begin{gather*}
V_n=\pd_x\pd_s\tau_n\,,\quad q_n=\log\frac{\tau_n}{\tau_{n-1}}\,,
\\
V_n'=\pd_x\pd_s\tau_n'\,, \quad q_n'=\log\frac{\tau_n'}{\tau_{n-1}'}\,.
\end{gather*}
Then \eqref{eq:BT} is translated into
\begin{gather}
  \label{eq:BT1}
  \pd_s(q_n-q_n')=(1-e^{-\pd})e^{q_{n+1}-q_n'}\,,
  \\
  \label{eq:BT2}
  \pd_x(q_{n+1}-q_n')=(1-e^\pd)e^{q_n'-q_n}\,,
\end{gather}
and
\begin{gather*}
  R_n:=\log(1+V_n)=q_{n+1}-q_n\,,\quad
  R_n':=\log(1+V_n')=q_{n+1}'-q_n'\,.
\end{gather*}
Note that \eqref{eq:BT1} and \eqref{eq:BT2} resemble
a B\"{a}cklund tranformation for $1$-dimensional
Toda lattices (\cite{Toda-Wadati}). Let
\begin{gather}
  \label{eq:Miura1a}
  u_n:=e^{q_{n+1}-q_n'}=\frac{\tau_{n+1}\tau_{n-1}'}{\tau_n\tau_n'}
  =\pd_s\log\frac{\tau_n}{\tau_n'}\,,
  \\
  \label{eq:Miura1b}
  v_n:=e^{q_n'-q_n}=\frac{\tau_{n-1}\tau_n'}{\tau_n\tau_{n-1}'}
    =\pd_x\log\frac{\tau_{n-1}'}{\tau_n}\,.
  \end{gather}
Then $(u,v)=\{(u_n,v_n)\}_{n\in\Z}$ is a solution of the modified Toda equation
  \begin{equation*}
\left\{
  \begin{aligned}
    & \pd_x u=u(1-e^\pd)v\,,
    \\ & \pd_sv=v(e^{-\pd}-1)u\,.
  \end{aligned}\right.
  \end{equation*}
  \par
Linearizing \eqref{eq:BT1} and \eqref{eq:BT2}, we obtain
Darboux transformations
\begin{equation}
  \label{eq:Darboux}
  \left\{
\begin{gathered}
  A(\pd_s)\bQ=A'(\pd_s)\bQd\,,
  \quad B(\pd_x)\bQ=B'(\pd_x)\bQd\,,\\
A(\pd_s)=\pd_s-(1-e^{-\pd})ue^\pd\,,\quad
A'(\pd_s)=\pd_s-(1-e^{-\pd})u\,,\\
B(\pd_x)=\pd_x-(1-e^{-\pd})v\,,\quad
B'(\pd_x)=e^{-\pd}\pd_x-(1-e^{-\pd})v\,, 
\end{gathered}\right.
\end{equation}
where $\bQ=\{\mathbf{Q_n}\}_{n\in\Z}$ and $\bQd=\{\mathbf{Q_n'}\}_{n\in \Z}$.
\par

If $\tau_n$ and $\tau_n'$ satisfy \eqref{eq:BT} and $V=V_n$ is a solution
of \eqref{eq:2dToda},
\begin{gather}
\label{eq:Miura2}  
  1+V_n=\frac{\tau_{n+1}\tau_{n-1}}{\tau_n^2}=u_nv_n\,,\\
\label{eq:Miura3}
  1+V_n'=\frac{\tau_{n+1}'\tau_{n-1}'}{(\tau_n')^2}=u_nv_{n+1}\,.
\end{gather}
Let $\Psi=\{\Psi_n\}_{n\in\Z}$ with $\Psi_n=\tau_n'/\tau_n$,
$V=\{V_n\}_{n\in\Z}$, $V'=\{V_n'\}_{n\in\Z}$
and $q=\{q_n\}_{n\in\Z}$, $q'=\{q_n'\}_{n\in\Z}$. Let
\begin{gather*}
  L_1=\pd_s+(1+V)e^{-\pd}\,,\quad
  L_2=\pd_x-e^\pd-e^\pd(\pd_xq)e^{-\pd}\,,
\\
L_1'=\pd_s+(1+V')e^{-\pd}\,,\quad
L_2'=\pd_x-e^\pd- e^\pd(\pd_xq')e^{-\pd}\,.
\end{gather*}
By \eqref{eq:Miura1a}--\eqref{eq:Miura3},
\begin{gather}
\label{eq:Darboux1a}
M_1(\pd_s):=\Psi^{-1}L_1\Psi=\pd_s-u(1-e^{-\pd})\,,
\\ \label{eq:Darboux1b}
M_2(\pd_x):=\Psi^{-1}L_2\Psi=\pd_x-e^\pd v(1-e^{-\pd})\,,
\\ \label{eq:Darboux1'a}
M_1'(\pd_s):=\Psi^{-1}e^{-\pd}L_1'e^\pd\Psi 
=\pd_s-(1-e^{-\pd})u=A'(\pd_s)\,,
\\ \label{eq:Darboux1'b}
M_2'(\pd_x):=\Psi^{-1}e^{-\pd}L_2' e^\pd\Psi 
=\pd_x-(e^\pd-1)v=e^\pd B'(\pd_x)\,,
\\ \label{eq:MM'}
(1-e^{-\pd})M_1=M_1'(1-e^{-\pd})\,,\quad (1-e^{-\pd})M_2=M_2'(1-e^{-\pd})\,,
\end{gather}
and
\begin{equation}
  \label{eq:A}
A(\pd_s)= -e^{-\pd}M_1(\pd_s)^*e^\pd \,,
\quad
B(\pd_x)= -e^{-\pd}M_2(\pd_x)^*e^\pd\,.
\end{equation}
follow from \eqref{eq:Darboux1a} and \eqref{eq:Darboux1b}.
By \eqref{eq:Darboux1a}--\eqref{eq:A}, we have the following.
\begin{claim}
  \label{cl:A,B,dual}
Formally,
  \begin{gather*}
A^*=-(e^\pd-1)^{-1}A'(e^\pd-1)=-\pd_s+e^{-\pd}u(e^\pd-1)\,,
\\   
(A')^*=-(1-e^{-\pd})^{-1}A(1-e^{-\pd})=-\pd_s+u(e^\pd-1)\,,
\\
B^*=-(1-e^{-\pd})^{-1}B'(e^\pd-1)=-\pd_x+v(e^\pd-1)\,,
\\   
(B')^*=-(1-e^{-\pd})^{-1}B(e^\pd-1)=-e^\pd\pd_x+v(e^\pd-1)\,.
\end{gather*}
\end{claim}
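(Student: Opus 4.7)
The plan is to verify the four identities in two stages. First I take formal adjoints of the defining formulas for $A$, $A'$, $B$, $B'$ termwise, using $\pd_s^{*}=-\pd_s$, $\pd_x^{*}=-\pd_x$, $(e^{\pm\pd})^{*}=e^{\mp\pd}$, and self-adjointness of real multiplication operators; this immediately yields the rightmost expression on each line. For instance, applied to $A=\pd_s-(1-e^{-\pd})ue^\pd$ one obtains $A^{*}=-\pd_s-e^{-\pd}u(1-e^\pd)=-\pd_s+e^{-\pd}u(e^\pd-1)$, and the computations for $(A')^{*}$, $B^{*}$, $(B')^{*}$ are one-line variants; note in particular that the prefactor $e^{-\pd}$ in the definition of $B'$ becomes the $e^\pd$ prefactor appearing in front of $\pd_x$ on the right of $(B')^{*}$.

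Second, I establish the intertwining (middle) expressions by elementary algebra of shift operators, relying on the identities
\begin{equation*}
1-e^{-\pd}=e^{-\pd}(e^\pd-1)=(e^\pd-1)e^{-\pd},\qquad e^\pd-1=e^\pd(1-e^{-\pd}),
\end{equation*}
together with the fact that $\pd_s$ and $\pd_x$ commute with every power of $e^{\pd}$. For the first identity of the claim I would expand
\begin{equation*}
A'(e^\pd-1)=(e^\pd-1)\pd_s-(1-e^{-\pd})u(e^\pd-1)=(e^\pd-1)\bigl[\pd_s-e^{-\pd}u(e^\pd-1)\bigr],
\end{equation*}
and left-divide by $(e^\pd-1)$; after negation this reproduces exactly the right-hand side already computed for $A^{*}$. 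The same maneuver with $(1-e^{-\pd})$ factored out of $A(1-e^{-\pd})$ handles the second line, and the two $B$-identities are structurally identical, with $\pd_x$ replacing $\pd_s$ and $v$ replacing $u$; the $(B')^{*}$-case additionally uses $e^\pd-1=e^\pd(1-e^{-\pd})$ to produce the $e^\pd$ sitting in front of $\pd_x$.

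A conceptually cleaner alternative is available from \eqref{eq:Darboux1'a}--\eqref{eq:A} together with \eqref{eq:MM'}: since $A=-e^{-\pd}M_1^{*}e^\pd$, one has $A^{*}=-e^{-\pd}M_1e^\pd$, while $A'=M_1'=(1-e^{-\pd})M_1(1-e^{-\pd})^{-1}$, and the telescoping identities $(e^\pd-1)^{-1}(1-e^{-\pd})=e^{-\pd}$ and $(1-e^{-\pd})^{-1}(e^\pd-1)=e^\pd$ collapse $(e^\pd-1)^{-1}A'(e^\pd-1)$ directly to $e^{-\pd}M_1e^\pd=-A^{*}$; the two $B$-identities follow in the same way via $e^\pd B'=M_2'$ from \eqref{eq:Darboux1'b}. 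Because the claim is flagged as a formal identity, there are no function-space subtleties to negotiate. The only genuine point requiring care is keeping track of which side of the multiplication operator $u$ or $v$ each shift operator sits on during the commutations; I would carry out the first such commutation in full detail and then invoke it symbolically for the remaining three cases.
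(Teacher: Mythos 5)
Your proposal is correct, and it is essentially the paper's own (implicit) argument: the paper offers no written proof beyond "By \eqref{eq:Darboux1a}--\eqref{eq:A}, we have the following," and the intended justification is exactly the direct formal computation you carry out — termwise adjoints using $\pd_s^*=-\pd_s$, $(e^{\pm\pd})^*=e^{\mp\pd}$, and commuting the shift operators through via $1-e^{-\pd}=(e^\pd-1)e^{-\pd}$ and $e^\pd-1=e^\pd(1-e^{-\pd})$. Your alternative route through $M_1$, $M_2$ and \eqref{eq:MM'} is a nice consistency check but is not needed; all four identities verify as you describe.
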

\par
Hereafter, let $\tau_n=1$ and $\tau_n'$ be as \eqref{def:taun'}.
Then $V_n=0$, $V_n'=V^\k_n$ and
$\tau_n$ and $\tau_n'$ satisfy
the B\"{a}cklund transformation \eqref{eq:BT}.
 Moreover,
 \begin{gather}
   \label{eq:un}
u_n=-\cosh\k+\sinh\k\tanh\k z_n(t)\,,
  \\ \label{eq:vn}
  v_n =-\cosh\k-\sinh\k\tanh\k z_{n-1}(t)\,.
\end{gather}
The Darboux transformation gives a correspondence between
the solutions of \eqref{eq:linear-1} and those of \eqref{eq:linear-0}.
\begin{lemma}
  \label{lem:Darboux}
Let $\beta_1\in\C$ and $\beta_2\in\C\setminus\{a,1/a\}$. Then
\begin{equation}
  \label{eq:8}
\begin{split}  
A'e^{-\pd}\left\{\Phi(\beta_1)\Phi^*(\beta_2)\right\}
=& Ae^{-\pd}\left\{\Phi^0(\beta_1)\Phi^{0,*}(\beta_2)\right\}
    \\ =&
-(1-e^{-\pd})\left\{u\left(e^{-\pd}\Phi(\beta_1)\right)\Phi^{0,*}
(\beta_2)\right\}\,,
  \end{split}  
\end{equation}
\begin{equation}
  \label{eq:11}
\begin{split}  
B'e^{-\pd}\left\{\Phi(\beta_1)\Phi^*(\beta_2)\right\}
=& Be^{-\pd}\left\{\Phi^0(\beta_1)\Phi^{0,*}(\beta_2)\right\}
\\ =&
(1-e^{-\pd})e^{-\pd}\left\{\Phi(\beta_1)\Phi^{0,*}(\beta_2)\right\}\,.
  \end{split}  
\end{equation}
\end{lemma}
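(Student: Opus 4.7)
\textbf{Proof plan for Lemma~\ref{lem:Darboux}.}

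The plan is to verify both identities by direct telescoping computations that ultimately reduce to a single algebraic recursion for the $1$-soliton $\tau$-function, namely the shadow of the B\"acklund transformation \eqref{eq:BT} relating $\tau_n \equiv 1$ and $\tau_n'$.

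For \eqref{eq:8}, I first apply $A' = \pd_s - (1 - e^{-\pd}) u$ to $e^{-\pd}\{\Phi(\beta_1)\Phi^*(\beta_2)\}$. Expanding $\pd_s(\Phi_n\Phi_n^*)$ as in the proof of Lemma~\ref{lem:prod} and using the Miura relation $1 + V_n^\k = u_n v_{n+1}$ from \eqref{eq:Miura3}, the result telescopes as $X_n - X_{n-1} = (1 - e^{-\pd}) X_n$ with
\begin{equation*}
X_n = u_n \Phi_{n-1}(\beta_1)\bigl[v_{n+1}\Phi_n^*(\beta_2) - \Phi_{n-1}^*(\beta_2)\bigr].
\end{equation*}
An analogous computation, now using $\pd_s\Phi_n^0 = -\Phi_{n-1}^0$ and $\pd_s\Phi_n^{0,*} = \Phi_{n+1}^{0,*}$, rewrites $Ae^{-\pd}\{\Phi^0(\beta_1)\Phi^{0,*}(\beta_2)\}$ as $(1-e^{-\pd})Y_n$ with $Y_n = [\Phi_{n-1}^0(\beta_1) - u_n\Phi_n^0(\beta_1)]\Phi_n^{0,*}(\beta_2)$. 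Thus \eqref{eq:8} is reduced to the two pointwise identities
\begin{equation*}
v_{n+1}\Phi_n^*(\beta) - \Phi_{n-1}^*(\beta) = -\Phi_n^{0,*}(\beta),
\qquad
\Phi_{n-1}^0(\beta) - u_n\Phi_n^0(\beta) = -u_n\Phi_{n-1}(\beta).
\end{equation*}

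Substituting the explicit formulas \eqref{eq:Jost}, \eqref{eq:dualJost}, \eqref{def:Phi0}, \eqref{def:Phi0*} and using $u_n = \tau_{n-1}'/\tau_n'$, $v_n = \tau_n'/\tau_{n-1}'$ (from \eqref{eq:Miura1a}, \eqref{eq:Miura1b} with $\tau_n\equiv 1$), both identities collapse to the three-term $\tau$-recursion
\begin{equation*}
\tau_{n-1}' + \tau_{n+1}' = (a + a^{-1})\,\tau_n',
\end{equation*}
which is immediate from \eqref{def:taun'}. The proof of \eqref{eq:11} follows the same template with $\pd_x$ in place of $\pd_s$: inside $\pd_x(\Phi_n\Phi_n^*)$ the $\pd_xq'$ contributions in \eqref{eq:Jost-sol}, \eqref{eq:dualJost-sol} cancel, so $B'e^{-\pd}\{\Phi\Phi^*\}$ telescopes into $(1 - e^{-\pd})\{\Phi_{n-1}[\Phi_{n-2}^* - v_n\Phi_{n-1}^*]\}$ and $Be^{-\pd}\{\Phi^0\Phi^{0,*}\}$ telescopes into $(1 - e^{-\pd})\{[\Phi_n^0 - v_n\Phi_{n-1}^0]\Phi_{n-1}^{0,*}\}$; the two algebraic identities that close the argument, $\Phi_{n-2}^*(\beta) - v_n\Phi_{n-1}^*(\beta) = \Phi_{n-1}^{0,*}(\beta)$ and $\Phi_n^0(\beta) - v_n\Phi_{n-1}^0(\beta) = \Phi_{n-1}(\beta)$, are again consequences of the same $\tau$-recursion.

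The hard part here is not conceptual but notational: four species of wave functions $\Phi$, $\Phi^*$, $\Phi^0$, $\Phi^{0,*}$ with four different shift and derivative rules must be tracked simultaneously, and the telescoping structure is easily obscured by the cross-terms produced by $\pd_s$ and $\pd_x$. Once these are organized, the algebraic content of \eqref{eq:8}--\eqref{eq:11} reduces entirely to the single recursion $\tau_{n-1}' + \tau_{n+1}' = (a + a^{-1})\tau_n'$ for the $1$-soliton $\tau$-function.
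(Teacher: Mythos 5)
Your proposal is correct and follows essentially the same route as the paper: expand $A'$ (resp.\ $B'$, $A$, $B$) on the product of wave functions, telescope via the Lax equations and the Miura relations \eqref{eq:Miura2}--\eqref{eq:Miura3}, and close with pointwise intertwining identities between $\Phi$, $\Phi^*$, $\Phi^0$, $\Phi^{0,*}$. The two identities you isolate are exactly the content of the paper's Claim~\ref{cl:Phi-updown} (in particular \eqref{eq:Phi-updown-2}), which the paper cites rather than re-deriving from the recursion $\tau_{n-1}'+\tau_{n+1}'=(a+a^{-1})\tau_n'$ as you do.
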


\begin{corollary}
  \label{cor:Darboux}
For any $\beta\in\C$,
\begin{gather}
\label{eq:corDarboux-1}
A'\frac{e^{-\pd}\Phi(\beta)}{\tau'}= B'\frac{e^{-\pd}\Phi(\beta)}{\tau'}=0\,,
\\
\label{eq:corDarboux-2}
A^*e^{-\pd}\frac{\Phi^0(\beta)}{\tau'}
= B^*e^{-\pd}\frac{\Phi^0(\beta)}{\tau'}=0\,.    
\end{gather}
\end{corollary}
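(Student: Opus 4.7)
The plan is to reduce each of the four identities to one of the Lax-pair annihilation relations already recorded in Lemma~\ref{lem:Jost} (for $\Phi(\beta)$) or in \eqref{eq:Jost-0} (for $\Phi^0(\beta)$), using the Darboux intertwining formulas \eqref{eq:Darboux1a}--\eqref{eq:A}. The specialisation that makes everything go through cleanly is the present choice $\tau_n \equiv 1$, so that $V = 0$, $q = 0$, the conjugating multiplier is $\Psi = \tau'$, and the ``bare'' Lax operators collapse to $L_1 = \pd_s + e^{-\pd}$ and $L_2 = \pd_x - e^\pd$.

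First I would establish \eqref{eq:corDarboux-1}. Rewriting \eqref{eq:Darboux1'a} gives $A' = \tau'^{-1}\, e^{-\pd} L_1' e^\pd\, \tau'$, and combining \eqref{eq:Darboux1'b} with the relation $B' = e^{-\pd} M_2'$ gives $B' = e^{-\pd}\, \tau'^{-1}\, e^{-\pd} L_2' e^\pd\, \tau'$, where the outer multiplication by $\tau'$ is diagonal in $n$. Since $\tau'_n$ is nowhere vanishing (by the explicit form \eqref{def:taun'} and the fact that $\cosh$ has no real zeros), the pointwise sandwich gives $\tau'\cdot (e^{-\pd}\Phi(\beta)/\tau') = e^{-\pd}\Phi(\beta)$, followed by $e^\pd e^{-\pd}\Phi(\beta) = \Phi(\beta)$. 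Lemma~\ref{lem:Jost} then supplies $L_1'\Phi(\beta) = L_2'\Phi(\beta) = 0$, so both $A'(e^{-\pd}\Phi(\beta)/\tau')$ and $B'(e^{-\pd}\Phi(\beta)/\tau')$ vanish.

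Next I would deduce \eqref{eq:corDarboux-2} by taking formal adjoints of \eqref{eq:A}. With $(e^{\pm\pd})^* = e^{\mp\pd}$ on $\ell^2$, the rule $(TUV)^* = V^* U^* T^*$ yields $A^* = -\,e^{-\pd} M_1 e^\pd$ and $B^* = -\,e^{-\pd} M_2 e^\pd$. Specialising \eqref{eq:Darboux1a} and \eqref{eq:Darboux1b} to $\tau \equiv 1$ gives $M_i = \tau'^{-1} L_i \tau'$ for $i=1,2$; the same telescoping $e^\pd e^{-\pd} = \mathrm{id}$ followed by the sandwich by $\tau'$ then reduces $A^*\, e^{-\pd}(\Phi^0(\beta)/\tau')$ to $-\,e^{-\pd}\tau'^{-1} L_1\Phi^0(\beta)$ and $B^*\, e^{-\pd}(\Phi^0(\beta)/\tau')$ to $-\,e^{-\pd}\tau'^{-1} L_2\Phi^0(\beta)$, both of which vanish by \eqref{eq:Jost-0}.

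The argument is entirely algebraic, so there is no genuine obstacle beyond careful bookkeeping of the shift operators, the diagonal multiplications by $\tau'$, and the formal adjoints (in particular, the fact that no $(1-e^{\pm\pd})^{-1}$ inversion from Claim~\ref{cl:A,B,dual} is ever needed). The only point worth verifying explicitly is that every pointwise division by $\tau'_n$, and by $\tau'_{n\pm 1}$ after shifts, is well-defined, which is immediate from \eqref{def:taun'} since $\cosh$ never vanishes on $\mathbb{R}$.
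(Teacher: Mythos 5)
Your argument is correct, and it reaches the corollary by a genuinely different (and somewhat more direct) route than the paper. The paper obtains \eqref{eq:corDarboux-1} by taking the residue at $\beta_2=a$ of the two identities in Lemma~\ref{lem:Darboux}: the left-hand sides produce $A'\bigl(e^{-\pd}\Phi(\beta)/\tau'\bigr)$ and $B'\bigl(e^{-\pd}\Phi(\beta)/\tau'\bigr)$ via \eqref{eq:dualJost-pole}, while the right-hand sides are regular at $\beta_2=a$ and contribute nothing; it then deduces \eqref{eq:corDarboux-2} from \eqref{eq:corDarboux-1} by combining the adjoint relations of Claim~\ref{cl:A,B,dual} with \eqref{eq:Phi-updown-1}, i.e.\ $A^*=-(e^\pd-1)^{-1}A'(e^\pd-1)$ together with $(1-e^{-\pd})(\Phi^0(\beta)/\tau')=e^{-\pd}\Phi(\beta)/\tau'$. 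You instead bypass Lemma~\ref{lem:Darboux} entirely and unwind the conjugation formulas \eqref{eq:Darboux1'a}--\eqref{eq:Darboux1'b} and \eqref{eq:Darboux1a}--\eqref{eq:A} directly, reducing all four identities to $L_1'\Phi(\beta)=L_2'\Phi(\beta)=0$ (Lemma~\ref{lem:Jost}) and $L_1\Phi^0(\beta)=L_2\Phi^0(\beta)=0$ (\eqref{eq:Jost-0}); your computations $A'=\Psi^{-1}e^{-\pd}L_1'e^\pd\Psi$, $B'=e^{-\pd}\Psi^{-1}e^{-\pd}L_2'e^\pd\Psi$ and $A^*=-e^{-\pd}\Psi^{-1}L_1\Psi e^\pd$, $B^*=-e^{-\pd}\Psi^{-1}L_2\Psi e^\pd$ with $\Psi=\tau'$ check out, as does the nonvanishing of $\tau'_n$. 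What the paper's route buys is that the corollary really is a corollary of the already-proved Lemma~\ref{lem:Darboux}, exhibiting $e^{-\pd}\Phi(\beta)/\tau'$ as the residue of the two-parameter family $e^{-\pd}\{\Phi(\beta_1)\Phi^*(\beta_2)\}$; what your route buys is self-containedness and the avoidance of both the residue computation and the formal inverses $(e^\pd-1)^{-1}$ appearing in Claim~\ref{cl:A,B,dual}.
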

\begin{proof}
Taking the residue of \eqref{eq:8} and \eqref{eq:11} at $\beta_2=a$, we have
\eqref{eq:corDarboux-1}.
Combining \eqref{eq:corDarboux-1} with Claims~\ref{cl:A,B,dual} and
\eqref{eq:Phi-updown-1}, we have \eqref{eq:corDarboux-2}.
Thus we complete the proof.  
\end{proof}

\begin{proof}[Proof of Lemma~\ref{lem:Darboux}]
By Lemma~\ref{lem:Jost},
  \begin{align*}
    A'e^{-\pd}(\Phi\Phi^*)=&
\pd_se^{-\pd}(\Phi\Phi^*)
 -(1-e^{-\pd})\left\{ue^{-\pd}(\Phi\Phi^*)\right\}
    \\=&
(1-e^{-\pd})\left[(e^{-\pd}\Phi)
\left\{(1+V')\Phi^*-u\left(e^{-\pd}\Phi^*\right)\right\}\right]\,.
  \end{align*}
 By \eqref{eq:Miura3} and \eqref{eq:Phi-updown-2},
 \begin{align*}
 (1+V')\Phi^*-u(e^{-\pd}\Phi^*)
   =& u\left\{(e^\pd v)\Phi^*-e^{-\pd}\Phi^*\right\}
      = -u\Phi^{0,*}\,.
 \end{align*}
Using \eqref{eq:Jost-0}, Lemma~\ref{lem:Jost} and Claim~\ref{cl:Phi-updown},
we can prove the rest in the same way.
\end{proof}

\bigskip

\section{Linear stability of the null solution}
\label{sec:exp-st0}
In this section, we will prove the exponential linear stability of the null solution in
$\ell^2_\a H^1(\R)\times \ell^2_\a L^2(\R)$.
By the standard argument, we can prove that \eqref{eq:linear-0}
is well-posed on $\ell^2_\a H^1(\R) \times \ell^2_\a$.
\begin{lemma}
  \label{lem:iv-0}
  Let $\a\in\R$ and $(\mathbf{R_0},\mathbf{R_1})\in
 \ell^2_\a H^1\times \ell^2_\a L^2$. Then, the initial value problem
 \begin{equation}
  \label{eq:iv-0}
\left\{
  \begin{aligned}
&  (\pd_t^2-\pd_y^2)\bR=(e^\pd-2+e^{-\pd})\bR\,,
\\ &
\bR(0)=\mathbf{R_0}\,,\quad \pd_t\bR(0)=\mathbf{R_1}\,,
\end{aligned}\right.
 \end{equation}
has a unique solution in the class
\begin{equation}
\label{eq:ivc}
C(\R;\ell^2_\a H^1)\cap C^1(\R;\ell^2_\a L^2)\,.
\end{equation}  
\end{lemma}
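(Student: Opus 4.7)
The plan is to recast \eqref{eq:iv-0} as a first-order abstract Cauchy problem and invoke $C_0$-semigroup theory. Setting $X:=\ell^2_\a H^1(\R)\times\ell^2_\a L^2(\R)$, $U(t):=(\bR(t),\pd_t\bR(t))^T$, $K:=e^\pd-2+e^{-\pd}$, and
\begin{equation*}
A:=\begin{pmatrix}0 & I \\ \pd_y^2+K & 0\end{pmatrix}\,,\quad D(A):=\ell^2_\a H^2(\R)\times\ell^2_\a H^1(\R)\,,
\end{equation*}
the equation \eqref{eq:iv-0} becomes $\pd_tU=AU$ with $U(0)=(\mathbf{R_0},\mathbf{R_1})^T$, and a solution $U\in C(\R;X)$ corresponds exactly to $\bR$ in the class \eqref{eq:ivc}.

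The argument hinges on two easy observations. First, since $\|e^{\pm\pd}f\|_{\ell^2_\a}=e^{\mp\a}\|f\|_{\ell^2_\a}$, the discrete Laplacian $K$ is bounded on $\ell^2_\a$ with $\|K\|\le e^\a+2+e^{-\a}$; hence the zeroth-order perturbation $(u,v)\mapsto(0,Ku)$ is a bounded operator on $X$ via the continuous embedding $\ell^2_\a H^1\hookrightarrow\ell^2_\a L^2$. Second, the unperturbed wave operator $A_0:=\begin{pmatrix}0 & I\\ \pd_y^2 & 0\end{pmatrix}$ generates a $C_0$-group on $X$: because $\pd_y^2$ acts only in $y$ and commutes with the discrete weight $e^{2\a n}$, the problem reduces by tensor structure to the classical wave-equation generator on $H^1(\R)\times L^2(\R)$, whose group $\{e^{tA_0}\}_{t\in\R}$ is given explicitly by the Fourier multipliers $\cos(|\xi|t)$ and $\sin(|\xi|t)/|\xi|$ with operator-norm growth $O(1+|t|)$.

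Combining these two ingredients via the bounded perturbation theorem for $C_0$-semigroup generators, $A=A_0+\begin{pmatrix}0 & 0\\ K & 0\end{pmatrix}$ generates a $C_0$-group $\{e^{tA}\}_{t\in\R}$ on $X$ with exponential operator-norm growth in $|t|$. Setting $U(t):=e^{tA}(\mathbf{R_0},\mathbf{R_1})^T$ then produces the unique solution, yielding $\bR$ in the class \eqref{eq:ivc}; uniqueness is built into the abstract semigroup theory. I do not anticipate any substantive obstacle --- this is precisely the ``standard argument'' the author invokes. The only mild subtlety worth flagging is the asymmetry $\|e^\pd\|_{\ell^2_\a\to\ell^2_\a}=e^{-\a}\ne e^\a=\|e^{-\pd}\|_{\ell^2_\a\to\ell^2_\a}$, which precludes framing $A$ as a skew-adjoint operator on a natural energy space and forces one to work with bounded perturbations of the wave group (or, equivalently, with an explicit Fourier-based representation of $e^{tA}$).
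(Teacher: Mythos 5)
Your argument is correct and complete; note, though, that the paper itself offers no proof of this lemma beyond the phrase ``by the standard argument,'' so the comparison is really with the machinery the author actually deploys two lines later. There, in the proof of Proposition~\ref{prop:exp-0}, the author conjugates by the weight ($\bR^\a=e^{\a n}\bR$), takes the Fourier transform in $(n,y)$, and writes the solution explicitly via the multipliers $\cos t\omega$ and $\sin(t\omega)/\omega$ with $\omega(\xi,\eta)=(\eta^2+4\sin^2\frac{\xi+i\a}{2})^{1/2}$, as in \eqref{eq:bR}; existence, uniqueness and the sharp growth rate $e^{2|t\sinh(\a/2)|}$ all drop out of Plancherel and the pointwise multiplier bounds in Lemma~\ref{lem:Im-omega}. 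Your route --- splitting $A=A_0+\bigl(\begin{smallmatrix}0&0\\K&0\end{smallmatrix}\bigr)$, generating the free wave group diagonally in $n$, and invoking the bounded perturbation theorem --- is a genuinely different and equally valid path: it is more robust (it would survive a $y$-dependent or non-Fourier-diagonalizable zeroth-order term) but yields only a crude exponential bound $Me^{(\omega_0+M\|K\|)|t|}$ rather than the rate $2\sinh(\a/2)$ that the paper needs later, so it cannot replace Proposition~\ref{prop:exp-0}, only Lemma~\ref{lem:iv-0}. Your closing observation is also accurate and worth keeping: since $(e^{\pd})^*=e^{-2\a}e^{-\pd}$ on $\ell^2_\a$, the operator $K$ is not self-adjoint there and its symbol $-4\sin^2\frac{\xi+i\a}{2}$ has positive real part near $\xi=0$, so no energy-conservation or skew-adjointness argument is available for $\a\ne0$ and some form of perturbation or explicit multiplier estimate is unavoidable.
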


Now we will estimate the growth rate of solutions to \eqref{eq:linear-0}.
\begin{proposition}
  \label{prop:exp-0}
  Let $\a\in\R\setminus\{0\}$ and $\bR$ be a solution of \eqref{eq:linear-0}
  in the class \eqref{eq:ivc}. Then, for every $t\in\R$,
\begin{equation*}
 \|\bR(t)\|_{\ell^2_\a H^1}+\|\pd_t\bR(t)\|_{\ell^2_\a L^2}
 \le Ke^{2|t\sinh\frac\a2|}
 (\|\bR(0)\|_{\ell^2_\a H^1}+\|\pd_t\bR(0)\|_{\ell^2_\a L^2})\,,
\end{equation*}
where $K$ is a positive constant that depends only on $\a$.
\end{proposition}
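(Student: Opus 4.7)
My plan is to diagonalize the equation via the joint Fourier transform in $n$ and $y$ after absorbing the weight $e^{\a n}$ into the unknown.

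Setting $\tilde{\bR}_n(t,y) := e^{\a n}\bR_n(t,y)$ gives isometries $\ell^2_\a H^1 \to \ell^2 H^1$ and $\ell^2_\a L^2 \to \ell^2 L^2$, and $\tilde{\bR}$ satisfies
\begin{equation*}
(\pd_t^2 - \pd_y^2)\tilde{\bR} = (e^{-\a}e^\pd - 2 + e^\a e^{-\pd})\tilde{\bR}.
\end{equation*}
Taking the discrete Fourier transform in $n$ (dual variable $\xi\in[-\pi,\pi]$) and the Fourier transform in $y$ (dual variable $\eta\in\R$) decouples the equation into the family of ODEs $\pd_t^2 \hat{\bR} + \omega^2\hat{\bR} = 0$, where
\begin{equation*}
\omega(\xi,\eta)^2 := \eta^2 + 2 - 2\cosh\a\cos\xi + 2i\sinh\a\sin\xi,
\end{equation*}
solved explicitly by $\hat{\bR}(t) = \cos(\omega t)\hat{\bR}(0) + \omega^{-1}\sin(\omega t)\pd_t\hat{\bR}(0)$.

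The crux of the proof is the pointwise bound $\sup_{\xi,\eta}|\Im\omega(\xi,\eta)| \le 2|\sinh(\a/2)|$. Writing $\omega = p+iq$ with $p,q\in\R$, the identities $p^2+q^2 = |\omega^2|$ and $p^2-q^2 = \Re\omega^2$ reduce this to $|\omega^2| \le \Re\omega^2 + 4(\cosh\a-1)$. Since both sides are nonnegative, I will square and verify
\begin{equation*}
\bigl(\Re\omega^2 + 4(\cosh\a-1)\bigr)^2 - |\omega^2|^2 = 4(\cosh\a-1)\bigl[2\eta^2 + (1-\cos\xi)\bigl(3\cosh\a - 1 - (\cosh\a+1)\cos\xi\bigr)\bigr],
\end{equation*}
which is nonnegative because $\cosh\a\ge 1$, $1-\cos\xi\ge 0$, and the linear factor in $\cos\xi$ takes the nonnegative values $2\cosh\a-2$ and $4\cosh\a$ at $\cos\xi = \pm 1$, hence throughout $[-1,1]$.

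With this bound, $|\cos(\omega t)|,|\sin(\omega t)| \le \cosh(qt) \le e^{2|t\sinh(\a/2)|}$ follows from the identities $|\cos(\omega t)|^2 = \cos^2(pt)+\sinh^2(qt)$ and $|\sin(\omega t)|^2 = \sin^2(pt)+\sinh^2(qt)$, so Plancherel handles the $\cos(\omega t)\hat{\bR}(0)$ piece. For $\omega^{-1}\sin(\omega t)$ I split according to $|\eta|$: when $|\eta|^2 \ge 4\cosh\a$ the bound $|\omega^2|\ge|\Re\omega^2| \ge \eta^2/2$ makes $\sqrt{1+\eta^2}/|\omega|$ uniformly bounded and $|\omega^{-1}\sin(\omega t)|\le \cosh(qt)/|\omega|$ suffices; when $|\eta|$ is bounded, the only region where $\omega$ can vanish, the integral representation $\omega^{-1}\sin(\omega t) = \int_0^t \cos(\omega s)\,ds$ and the monotonicity of $q\mapsto \sinh(|q||t|)/|q|$ give the $|q|$-uniform bound $|\omega^{-1}\sin(\omega t)|\le \sinh(2|t|\sinh(\a/2))/(2|\sinh(\a/2)|) \le e^{2|t\sinh(\a/2)|}/(2|\sinh(\a/2)|)$. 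The main obstacle is the pointwise bound on $|\Im\omega|$; once the algebraic inequality above is established, the remaining Fourier analysis is routine.
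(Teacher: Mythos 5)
Your proposal is correct, and at the top level it follows the same route as the paper: conjugate by the weight $e^{\a n}$, diagonalize by the Fourier transform in $(n,y)$, write the explicit oscillator solution with $\omega^2=\eta^2+2-2\cosh\a\cos\xi+2i\sinh\a\sin\xi$, and reduce everything to the bound $|\Im\omega|\le 2|\sinh(\a/2)|$ together with a separate treatment of the zero of $\omega$ at $(\xi,\eta)=(0,\pm 2\sinh(\a/2))$. Where you differ is in how the two key sub-steps are executed, and in both places your version is arguably cleaner. For the bound on $\Im\omega$, the paper (Lemma~\ref{lem:Im-omega}) argues by calculus, computing $\pd_\eta\omega_I=-\eta\omega_I/|\omega|^2$ to get monotonicity in $\eta$ and then evaluating $\omega_I(\xi,0)=2\cos(\xi/2)\sinh(\a/2)$; you instead verify the equivalent inequality $|\omega^2|\le\Re\omega^2+4(\cosh\a-1)$ by an explicit algebraic factorization, which I checked: the bracket $2\eta^2+(1-\cos\xi)\bigl(3\cosh\a-1-(\cosh\a+1)\cos\xi\bigr)$ is indeed what comes out, and its nonnegativity follows from linearity in $\cos\xi$ as you say. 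For the singular point, the paper bounds $|\sin(t\omega)/\omega|\le te^{\eps t}$ on a small neighborhood $E_\delta$ (absorbing the factor $t$ by taking $\eps<2\sinh(\a/2)$) and uses $\la\eta\ra\le C|\omega|$ off $E_\delta$; your integral representation $\omega^{-1}\sin(\omega t)=\int_0^t\cos(\omega s)\,ds$ combined with the monotonicity of $x\mapsto\sinh(x|t|)/x$ gives the uniform bound $\sinh(2|t\sinh(\a/2)|)/(2|\sinh(\a/2)|)$ with no $\eps$--$\delta$ bookkeeping and no spurious factor of $t$. The only item you leave implicit is the term $\omega\sin(t\omega)\widehat{\bR^\a}(0)$ in $\pd_t\widehat{\bR^\a}(t)$, which needs the elementary bound $|\omega|\le C\la\eta\ra$; this is immediate from the formula for $\omega^2$ and is also used in the paper, so it is not a gap worth flagging beyond this remark.
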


To prove Proposition~\ref{prop:exp-0}, we will use the Planchrel theorem.
Let  $\bR^\a(n,y,t):=e^{\a n}\bR(n,y,t)$ and
\begin{equation*}
\widehat{\bR^\a}(\xi,\eta,t)=\frac{1}{2\pi}\sum_{n\in\Z}
  \int_\R \bR(n,y,t)e^{\a n-i(n\xi+y\eta)}\,dy\,.
\end{equation*}
Then
\begin{equation}
  \label{eq:10}
  \begin{split}
  &  \|\bR(t)\|_{\ell^2_\a H^1(\R)}^2+\|\pd_t\bR(t)\|_{\ell^2_\a L^2(\R)}^2
  \\ = &
\left\|\la \eta\ra\widehat{\bR^\a} \right\|_{L^2([-\pi,\pi]\times\R)}^2
 +\left\|\widehat{\pd_t\bR^\a}\right\|_{L^2([-\pi,\pi]\times\R)}^2\,,    
  \end{split}
\end{equation}
where $\la \eta\ra=\sqrt{1+\eta^2}$.
\par

If $\bR$ is a solution of \eqref{eq:linear-0}, 
\begin{align}
\label{eq:wbRa}
 (\pd_t^2+\eta^2)\widehat{\bR^\a}=& (e^{\a-i\xi}-2+e^{-\a+i\xi})\widehat{\bR^\a}  
  \\  =& \notag
-4\sin^2\frac{\xi+i\a}{2}\widehat{\bR^\a}\,,
\end{align}
and
\begin{equation}
  \label{eq:bR}
  \begin{pmatrix}
\omega\widehat{\bR^\a}(t) \\ \pd_t\widehat{\bR^\a}(t)
\end{pmatrix}
=
\begin{pmatrix}
  \cos t\omega & \sin t\omega
  \\ -\sin t\omega & \cos t\omega
\end{pmatrix}
  \begin{pmatrix}
\omega\widehat{\bR^\a}(0) \\ \pd_t\widehat{\bR^\a}(0)
\end{pmatrix}\,,  
\end{equation}
where 
$\omega(\xi,\eta)
=\left(\eta^2+4\sin^2\frac{\xi+i\a}{2}\right)^{1/2}$ and
$\arg\omega\in(-\pi/2,\pi/2]$.
\par
To investigate the growth rate of $\widehat{\bR^\a}(t)$,
we need the following.
\begin{lemma}
  \label{lem:Im-omega}
  Let $\a>0$, $\xi\in[0,\pi]$ and $\eta\in\R$. Then
$\omega\to0$ if and only if $(\xi,\eta)\to
(0,\pm2\sinh(\a/2))$ and
\begin{gather*}
\omega(\xi,\eta)=|\eta|(1+o(1))\quad\text{as $\eta\to\pm\infty$,}\\
\Im \omega(-\xi,\eta)=-\Im\omega(\xi,\eta)\,,
\quad 0\le \Im\omega(\xi,\eta)\le \Im\omega(\xi,0)\le 2\sinh\frac{\a}{2}\,.
  \end{gather*}
\end{lemma}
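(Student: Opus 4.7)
The plan is to reduce everything to an explicit formula for $\omega^2$. Using the addition formula for $\sin$ on $\frac{\xi+i\a}{2}$, a direct computation gives
\begin{equation*}
\omega(\xi,\eta)^2=A(\xi,\eta)+iB(\xi)\,,\qquad A(\xi,\eta)=\eta^2+2(1-\cos\xi\cosh\a)\,,\quad B(\xi)=2\sin\xi\sinh\a\,.
\end{equation*}
From this representation the first claim is immediate: $\omega=0$ iff $A=B=0$. Since $\a>0$ and $\xi\in[0,\pi]$, $B=0$ forces $\xi\in\{0,\pi\}$, and $\xi=\pi$ makes $A=\eta^2+4\cosh^2(\a/2)>0$, so $\xi=0$ and then $A=\eta^2-4\sinh^2(\a/2)=0$ gives $\eta=\pm 2\sinh(\a/2)$. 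The asymptotic $\omega(\xi,\eta)=|\eta|(1+o(1))$ as $|\eta|\to\infty$ follows by writing $\omega^2=\eta^2(1+O(\eta^{-2}))$ and taking the principal square root consistent with $\arg\omega\in(-\pi/2,\pi/2]$.

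For the symmetry, observe that $A(-\xi,\eta)=A(\xi,\eta)$ and $B(-\xi)=-B(\xi)$, so $\omega(-\xi,\eta)^2=\overline{\omega(\xi,\eta)^2}$. Both $\omega(-\xi,\eta)$ and $\overline{\omega(\xi,\eta)}$ have non-negative real part (they lie in the closed right half-plane dictated by the branch choice), hence they coincide, giving $\Im\omega(-\xi,\eta)=-\Im\omega(\xi,\eta)$. For $\xi\in[0,\pi]$ we have $B(\xi)\ge 0$, so $\omega^2$ lies in the upper half-plane, forcing $\arg\omega\in[0,\pi/2]$ and thus $\Im\omega(\xi,\eta)\ge 0$.

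The core step is the monotonicity $\Im\omega(\xi,\eta)\le\Im\omega(\xi,0)$. Writing $\omega=p+iq$ with $p,q\ge 0$, the relations $p^2-q^2=A$ and $2pq=B$ yield
\begin{equation*}
q^2=\frac{|\omega^2|-A}{2}=\frac{\sqrt{A^2+B^2}-A}{2}\,.
\end{equation*}
Since $B$ is independent of $\eta$ and $dA/d(\eta^2)=1$, differentiation gives
\begin{equation*}
\frac{d(q^2)}{d(\eta^2)}=\frac{1}{2}\left(\frac{A}{\sqrt{A^2+B^2}}-1\right)\le 0\,,
\end{equation*}
so $q^2$ is non-increasing in $\eta^2$ and therefore $\Im\omega(\xi,\eta)\le\Im\omega(\xi,0)$.

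Finally, at $\eta=0$ we have $\omega(\xi,0)^2=4\sin^2\!\frac{\xi+i\a}{2}$, so $\omega(\xi,0)=2\sin\!\frac{\xi+i\a}{2}$ (the other root is excluded by the branch choice since the right side has non-negative real part for $\xi\in[0,\pi]$, $\a>0$). Expanding,
\begin{equation*}
\Im\omega(\xi,0)=2\cos\tfrac{\xi}{2}\sinh\tfrac{\a}{2}\le 2\sinh\tfrac{\a}{2}\,,
\end{equation*}
with equality at $\xi=0$. I expect the main obstacle to be bookkeeping the branch of the square root consistently across the three claims; once the explicit formula for $\omega^2$ is in hand and the branch is pinned down by $\arg\omega\in(-\pi/2,\pi/2]$, each item reduces to an elementary calculation.
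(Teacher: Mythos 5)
Your proof is correct and follows essentially the same route as the paper: the same explicit decomposition $\omega^2=A+iB$ with $A=\eta^2+2-2\cos\xi\cosh\a$ and $B=2\sin\xi\sinh\a$, the same case analysis at $\xi\in\{0,\pi\}$ to locate the zero of $\omega$, and monotonicity of $\Im\omega$ in $\eta$ (you differentiate the closed form $q^2=(\sqrt{A^2+B^2}-A)/2$ in $\eta^2$ where the paper computes $\pd_\eta\omega_I=-\eta\omega_I/|\omega|^2$ implicitly, but these are the same elementary calculation). The only caveat, which you share with the paper's own argument, is that the oddness claim $\Im\omega(-\xi,\eta)=-\Im\omega(\xi,\eta)$ degenerates at $\xi=0$ with $|\eta|<2\sinh(\a/2)$, where $\omega^2$ is a negative real number and the two square roots with nonnegative real part do not coincide, so the identity should be read for $\xi\in(0,\pi]$.
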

\begin{proof}
 Since
  \begin{align*}
\omega(\xi,\eta)^2=
\eta^2+2-2\cos\xi\cosh\a+2i\sin\xi\sinh\a=:\zeta_1+i\zeta_2\,,
  \end{align*}
  we have $\omega(\xi,\eta)=|\eta|(1+o(1))$ as $\eta\to\pm\infty$. 
  If $\zeta_2\to0$, then $\xi\to0$ or $\xi\to\pi$.
  Since $\omega(0,\eta)^2=\eta^2-4\sinh^2(\a/2)$ and
  $\omega(\pi,\eta)^2=\eta^2+4\cosh^2(\a/2)$, we have
  $\omega\to0$ if and only if $\xi\to0$ and $\eta\to \pm2\sinh(\a/2)$.
  \par
Let $\omega_R+i\omega_I:=\omega$.
Then $\omega_I>0$ for $\xi\in(0,\pi)$,
$\omega_R>0$ unless $\xi=0$ and $|\eta|\le 2\sinh(\a/2)$, and
  $\omega(0,\eta)=i\sqrt{4\sinh^2(\a/2)-\eta^2}$
  if $|\eta|\le 2\sinh(\a/2)$.
Since $\zeta_1$ is even and $\zeta_2$ is odd in $\xi$,
$\omega_R$ is even and $\omega_I$ is odd in $\xi\in(-\pi,\pi)$.
\par
Since $\omega=(\zeta_1+i\zeta_2)^{1/2}$ and
$\pd(\zeta_1+i\zeta_2)/\pd\eta=2\eta$,
we have $\pd_{\zeta_1}\omega_I=-\omega_I/2|\omega|^2$ and
\begin{align*}
\pd_\eta\omega_I=& \pd_{\eta}\zeta_1\pd_{\gamma_1}\omega_I
= -\frac{\eta\omega_I}{|\omega|^2}\,.
\end{align*}
Combining the above with the fact that $\omega_I>0$ for $\xi\in(0,\pi)$,
we have for $\xi\in(0,\pi)$,
\begin{align*}
0\le  \omega_I(\xi,\eta)\le & \omega_I(\xi,0)
=2\cos\frac{\xi}{2}\sinh\frac{\a}{2}\,.
\end{align*}
Thus we complete the proof.
\end{proof}
Now, we are in a position to prove Proposition~\ref{prop:exp-0}.
\begin{proof}[Proof of Proposition~\ref{prop:exp-0}]
Without loss of generality, we may assume $t\ge0$ and $\a>0$.
By Lemma~\ref{lem:Im-omega}, $|\omega_I|\le 2\sinh(\a/2)$ and
there exists a  constant $C_1\ge1$
such that $|\omega(\xi,\eta)|\le C_1\la \eta\ra$
for $\xi\in[-\pi,\pi]$ and $\eta\in\R$.
Hence it follows from \eqref{eq:bR} that
 \begin{align*}
& \left\|\omega\widehat{\bR^\a}(t)\right\|_{L^2([-\pi,\pi]\times\R)}
+\left\|\pd_t\widehat{\bR^\a}(t)\right\|_{L^2([-\pi,\pi]\times\R)}
\\ \le & 2e^{2t\sinh(\a/2)} \left(
 \left\|\omega\widehat{\bR^\a}(0)\right\|_{L^2([-\pi,\pi]\times\R)}
+\left\|\pd_t\widehat{\bR^\a}(0)\right\|_{L^2([-\pi,\pi]\times\R)}
\right)
\\ \le & 2C_1e^{2t\sinh(\a/2)} \left(
 \left\|\la\eta\ra\widehat{\bR^\a}(0)\right\|_{L^2([-\pi,\pi]\times\R)}
+\left\|\pd_t\widehat{\bR^\a}(0)\right\|_{L^2([-\pi,\pi]\times\R)}
\right) \,.
 \end{align*}
 Let $\eps\in(0,2\sinh(\a/2))$ and
 $E_\delta=\{(\xi,\eta)\in\R^2\mid
 |\xi|+|\eta\pm2\sinh(\a/2)|<\delta\}$.
Since $\omega(0,\pm2\sinh(\a/2))=0$, there exists a $\delta>0$  such that
 $|\omega(\xi,\eta)|<\eps$ for $(\xi,\eta)\in E_\delta$ and
  \begin{equation*}
   \left| \frac{\sin t\omega(\xi,\eta)}{\omega(\xi,\eta)}
   \right|\le te^{\eps t}\quad\text{for $(\xi,\eta)\in E_\delta$.}
 \end{equation*}
Thus we have
\begin{align*}
 \left\|\widehat{\bR^\a}(t)\right\|_{L^2(E_\delta)}
  \le &       
 \left\|\cos(t\omega)\widehat{\bR^\a}(0)\right\|_{L^2(E_\delta)}
        + \left\|\frac{\sin(t\omega)}{\omega}
        \pd_t\widehat{\bR^\a}(0)\right\|_{L^2(E_\delta)}
  \\ \le & e^{2t\sinh(\a/2)}
\left\|\la\eta\ra\widehat{\bR^\a}(0)\right\|_{L^2(E_\delta)}
+te^{\eps t}
\left\|\pd_t\widehat{\bR^\a}(0)\right\|_{L^2(E_\delta)}\,.  
\end{align*}
On the other hand, it follows from Lemma~\ref{lem:Im-omega} that
there exists a positive constant $C_2$ such that for
$(\xi,\eta)\in [-\pi,\pi]\times \R\setminus E_\delta$,
$\la \eta\ra\le C_2|\omega(\xi,\eta)|$  and
\begin{align*}
\left\|\la\eta\ra\widehat{\bR^\a}(t)
\right\|_{L^2([-\pi,\pi]\times\R\setminus E_\delta)}  
\le C_2\left\|\omega(\xi,\eta)\widehat{\bR^\a}(t)
\right\|_{L^2([-\pi,\pi]\times\R\setminus E_\delta)}\,.
\end{align*}
This completes the proof of Proposition~\ref{prop:exp-0}.
\end{proof}

As a byproduct of the proof of Proposition~\ref{prop:exp-0},
we have the following.
\begin{corollary}
  \label{cor:exp-0}
  Let $\a>0$, $\eta$, $t_0\in\R$ and $\bR_\eta(t)$ be a solution of
  \begin{equation}
    \label{eq:13}
  (\pd_t^2+\eta^2)\bR_\eta=(e^\pd-2+e^{-\pd})\bR_\eta
  \end{equation}
  in the class $C(\R;\ell^2_\a)$. Then for every $t\ge t_0$,
  \begin{align*}
&    \|\la \eta\ra\bR_\eta(t)\|_{\ell^2_\a}+\|\pd_t\bR_\eta(t)\|_{\ell^2_\a}
 \\   \le & K e^{2(t-t_0)\sinh(\a/2)}
\left(\|\la \eta\ra\bR_\eta(t_0)\|_{\ell^2_\a}+\|\pd_t\bR_\eta(t_0)\|_{\ell^2_\a}
\right)\,,
 \end{align*}
where $K$ is a positive constant that depends only on $\a$.
\end{corollary}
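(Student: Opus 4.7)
The plan is to mimic the proof of Proposition~\ref{prop:exp-0} verbatim, but with $\eta$ held fixed and the Fourier transform taken only in the variable $n$. Setting $\bR_\eta^\a(n,t):=e^{\a n}\bR_\eta(n,t)$ and letting $\widehat{\bR_\eta^\a}(\xi,t)$ denote its discrete Fourier transform in $n$, equation \eqref{eq:13} diagonalises into the ODE $\pd_t^2\widehat{\bR_\eta^\a}=-\omega(\xi,\eta)^2\widehat{\bR_\eta^\a}$, with exactly the symbol $\omega(\xi,\eta)^2=\eta^2+4\sin^2\frac{\xi+i\a}{2}$ appearing in \eqref{eq:wbRa}. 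The propagator identity \eqref{eq:bR}, suitably shifted from $0$ to $t_0$, then applies directly. Since $|\Im\omega|\le 2\sinh(\a/2)$ by Lemma~\ref{lem:Im-omega}, both $|\cos((t-t_0)\omega)|$ and $|\sin((t-t_0)\omega)|$ are bounded by $e^{2(t-t_0)\sinh(\a/2)}$, which, combined with Parseval in the $\xi$ variable, yields the $\omega$-weighted estimate for the pair $(\omega\widehat{\bR_\eta^\a},\pd_t\widehat{\bR_\eta^\a})$.

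To convert this into the claimed $\la\eta\ra$-weighted estimate I would split $[-\pi,\pi]$ into the horizontal slice $E_\delta\cap(\{\xi\}\times\{\eta\})$ of the set $E_\delta$ defined in the proof of Proposition~\ref{prop:exp-0} and its complement. On the complement Lemma~\ref{lem:Im-omega} provides $\la\eta\ra\le C_2|\omega|$, so the desired bound follows at once from the $\omega$-bound. On the slice, which is a (possibly empty) $\xi$-interval around $0$ that is non-empty only when $\eta$ lies in a bounded neighborhood of $\pm 2\sinh(\a/2)$, one has $\la\eta\ra\le C(\a)$, and I would use
\[
\Bigl|\frac{\sin((t-t_0)\omega)}{\omega}\Bigr|\le (t-t_0)e^{(t-t_0)|\Im\omega|}\le (t-t_0)e^{2(t-t_0)\sinh(\a/2)}
\]
in place of the dangerous factor $1/\omega$. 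The polynomial factor $(t-t_0)$ is absorbed into an enlarged constant $K$ by inserting an arbitrarily small $\eps>0$ into the exponent and comparing with $e^{\eps(t-t_0)}$, exactly as in the last paragraph of the proof of Proposition~\ref{prop:exp-0}.

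The one point needing care is the $\eta$-uniformity of $K$. This holds because the two possible zeros of $\omega$ in $[-\pi,\pi]\times\R$ are located at $(0,\pm 2\sinh(\a/2))$, a set depending only on $\a$; consequently $\delta$ can be chosen uniformly in $\eta$, and the comparison constants $C_2$ and $C(\a)$ on the two regions depend only on $\a$. This uniformity is precisely why the author states the result as a byproduct of Proposition~\ref{prop:exp-0}; the main (and essentially only) subtlety in the proof is keeping track of the behaviour of $\omega$ near its zero set, which has already been analysed in that proposition.
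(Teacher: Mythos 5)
Your proposal is correct and follows essentially the same route as the paper: the paper's proof of Corollary~\ref{cor:exp-0} likewise takes the Fourier transform in $n$ only, observes that $\widehat{\bR^\a}$ solves \eqref{eq:wbRa} with the same symbol $\omega(\xi,\eta)$, invokes the fixed-$(\xi,\eta)$ bounds already established in the proof of Proposition~\ref{prop:exp-0} (including the treatment of $\sin((t-t_0)\omega)/\omega$ near the zeros $(0,\pm2\sinh(\a/2))$, which depend only on $\a$), and concludes by Parseval in $\xi$. The only quibble is your intermediate chain $(t-t_0)e^{(t-t_0)|\Im\omega|}\le (t-t_0)e^{2(t-t_0)\sinh(\a/2)}$ on the slice of $E_\delta$: to absorb the factor $(t-t_0)$ you must use that $|\Im\omega|\le|\omega|<\eps<2\sinh(\a/2)$ there, so the exponent is strictly smaller than $2\sinh(\a/2)$ and the linear factor is dominated --- which is exactly what the last paragraph of the proof of Proposition~\ref{prop:exp-0} that you cite actually does.
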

\begin{proof}
Let
  \begin{equation*}
    \widehat{\bR^\a}(\xi,\eta,t)=\frac{1}{\sqrt{2\pi}}
\sum_{n\in\Z}e^{\a n}\bR_\eta(n,t)e^{-in\xi}\,.
  \end{equation*}
Then $\bR^\a$ is a solution of \eqref{eq:wbRa}. By Perseval's identity,
\begin{equation}
  \label{eq:Plancherel2}
  \|\bR_\eta(\cdot,t)\|_{\ell^2_\a}=
  \left\|\widehat{\bR^\a}(\cdot,\eta,t)\right\|_{L^2(-\pi,\pi)}
\end{equation}
In view of the proof of Proposition~\ref{prop:exp-0}, we have
  for $(\xi,\eta)\in[-\pi,\pi]\times \R$ and $t\ge t_0$,
  \begin{align*}
& \left|\la\eta\ra\widehat{\bR^\a}(\xi,\eta,t)\right|
+\left|\pd_t\widehat{\bR^\a}(\xi,\eta,t)\right|
\\ \le & K e^{2(t-t_0)\sinh(\a/2)}
\left(\left|\la\eta\ra\widehat{\bR^\a}(\xi,\eta,t_0)\right|
+\left|\pd_t\widehat{\bR^\a}(\xi,\eta,t_0)\right|\right)\,,
  \end{align*}
  where $K$ is a positive constant that depends only on $\a$.
  Combining the above with \eqref{eq:Plancherel2}, we have
  Corollary~\ref{cor:exp-0}. Thus we complete the proof.
\end{proof}
\bigskip

\section{ Daroboux transformations for the linearized
  equation around $1$-soliton}
\label{sec:Darboux-2}

It is clear that \eqref{eq:6} is well-posed
on $\ell^2_\a H^1(\R)\times \ell^2_\a L^2(\R)$.
\begin{lemma}
  \label{lem:iv-1}
  Let $\a\in\R$ and $(\mathbf{Q_0'},\mathbf{Q_1'})\in
 \ell^2_\a H^1\times \ell^2_\a L^2$. Then, the initial value problem
 \begin{equation*}
\left\{
  \begin{aligned}
&  (\pd_t^2-\pd_y^2)\bQd=(1-e^{-\pd})(1+V^\k)(e^\pd-1)\bQd\,,
\\ &
\bQd(0)=\mathbf{Q_0'}\,,\quad \pd_t\bQd(0)=\mathbf{Q_1'}\,,
\end{aligned}\right.
 \end{equation*}
has a unique solution in the class \eqref{eq:ivc}.
\end{lemma}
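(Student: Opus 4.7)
The plan is to view \eqref{eq:6} as a uniformly bounded time-dependent perturbation of the free equation \eqref{eq:linear-0} and to derive well-posedness from Lemma~\ref{lem:iv-0} by a Duhamel-plus-Gronwall argument.

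First I would decompose the right-hand side as
\begin{equation*}
(1-e^{-\pd})(1+V^\k)(e^\pd-1)\bQd
 = (e^\pd - 2 + e^{-\pd})\bQd + F(t)\bQd,
\end{equation*}
with $F(t)\bQd := (1-e^{-\pd})\bigl[V^\k(t)(e^\pd - 1)\bQd\bigr]$. The shift operators $e^{\pm\pd}$ have operator norm $e^{\mp\a}$ on $\ell^2_\a$, and multiplication by $V^\k_n(t)$ has norm at most $\sinh^2\k$ uniformly in $(n,t)$ by \eqref{eq:1sol}. Since $F(t)$ commutes with $\pd_y$, this yields $F(t)\in\mathcal{L}(\ell^2_\a H^1)\cap\mathcal{L}(\ell^2_\a L^2)$ with a bound independent of $t$, and continuity of $t\mapsto F(t)$ in the strong operator topology follows from the smoothness of $V^\k_n$ in $t$.

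Next I would recast the IVP as the first-order system $\pd_t U = \mathcal{A}_0 U + \mathcal{F}(t)U$ on $\mathcal{X} := \ell^2_\a H^1(\R)\times \ell^2_\a L^2(\R)$, where $U=(\bQd,\pd_t\bQd)$,
\begin{equation*}
\mathcal{A}_0 := \begin{pmatrix} 0 & I \\ \pd_y^2 + e^\pd - 2 + e^{-\pd} & 0 \end{pmatrix},
\qquad
\mathcal{F}(t) := \begin{pmatrix} 0 & 0 \\ F(t) & 0 \end{pmatrix}.
\end{equation*}
Lemma~\ref{lem:iv-0} together with Proposition~\ref{prop:exp-0} show that $\mathcal{A}_0$ generates a strongly continuous group $\{S(t)\}_{t\in\R}$ on $\mathcal{X}$ with controlled exponential growth. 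Contraction mapping applied to the mild formulation
\begin{equation*}
U(t) = S(t)U(0) + \int_0^t S(t-s)\,\mathcal{F}(s)U(s)\,ds
\end{equation*}
on $C([-T,T];\mathcal{X})$ for a small $T>0$, followed by a Gronwall extension, delivers a unique global solution $U\in C(\R;\mathcal{X})$, whose components give the desired regularity of $\bQd$.

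The main obstacle is verifying the uniform-in-$t$ operator bound for $F(t)$ on the weighted space $\ell^2_\a H^1$; this rests on the pointwise estimate $V^\k_n(t)\le\sinh^2\k$ together with the commutativity $[F(t),\pd_y]=0$. Once that is in hand, the abstract perturbation scheme is standard, and uniqueness in the full class stated in the lemma follows by applying the same Gronwall estimate to the difference of two candidate solutions.
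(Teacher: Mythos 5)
Your argument is correct and is precisely the standard bounded-perturbation scheme that the paper invokes without writing out: the paper offers no proof of Lemma~\ref{lem:iv-1}, stating only that the well-posedness of \eqref{eq:6} on $\ell^2_\a H^1(\R)\times\ell^2_\a L^2(\R)$ is clear. Splitting off $e^\pd-2+e^{-\pd}$ and treating $F(t)=(1-e^{-\pd})V^\k(t)(e^\pd-1)$ as a $t$-uniformly bounded operator on $\ell^2_\a H^1$ and $\ell^2_\a L^2$ (via Lemma~\ref{lem:pd-1}, the pointwise bound $V^\k_n\le\sinh^2\k$ from \eqref{eq:1sol}, and the $y$-independence of $V^\k$), then running Duhamel, contraction and Gronwall on top of Lemma~\ref{lem:iv-0}, is exactly the intended argument; the only cosmetic point is that for $\a=0$ the locally uniform bound on the free propagator should be drawn from Lemma~\ref{lem:iv-0} together with the uniform boundedness principle rather than from Proposition~\ref{prop:exp-0}, which is stated only for $\a\ne0$.
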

Hereafter, we will investigate \eqref{eq:6} instead of \eqref{eq:linear-1}
since $e^\pd-1$ is isomorphic on $\ell^2_\a$ for any $\a>0$
(see Lemma~\ref{lem:pd-1}).
\par
Let $\bR$ be a solution of \eqref{eq:linear-0} in the class \eqref{eq:ivc}
and let $\bQ=(e^{\pd}-1)^{-1}\bR$. Then $\bQ$ is a solution of
\begin{equation}
  \label{eq:7}
(\pd_t^2-\pd_y^2)\bQ=(e^\pd-2+e^{-\pd})\bQ
\end{equation}
in the class \eqref{eq:ivc}.
Taking the Fourier transformation of \eqref{eq:7}
with respect to $y$, we have
\begin{gather}
  \label{eq:linear0-F}
(\pd_t^2+\eta^2)\widehat{\bQ}(\eta)=(e^\pd-2+e^{-\pd})\widehat{\bQ}(\eta)\,.
\end{gather}
\par 

Next, we will confirm that a Darboux transformation \eqref{eq:Darboux}
gives a correspondence between solutions of \eqref{eq:6}
and solutions of \eqref{eq:7}.
By \eqref{eq:v-change} and \eqref{eq:Darboux},
\begin{equation}
  \label{eq:4}
  \begin{pmatrix}
   A(-\pd_y) & 1 \\ B(\pd_y) & 1
 \end{pmatrix}
 \begin{pmatrix}
   \bQ \\ \pd_t\bQ
 \end{pmatrix}
 =
  \begin{pmatrix}
   A'(-\pd_y) & 1 \\ B'(\pd_y) & e^{-\pd}
 \end{pmatrix}
 \begin{pmatrix}
   \bQd \\ \pd_t\bQd
 \end{pmatrix}
\end{equation}
Multiplying \eqref{eq:4} by $\begin{pmatrix}
  1 & -1\\ -1 &e^\pd
\end{pmatrix}$ from the left side, we have
\begin{multline}
  \label{eq:Darboux2}
  \begin{pmatrix}
 A(-\pd_y)-B(\pd_y) & 0 \\ e^\pd B(\pd_y)-A(-\pd_y) & e^\pd-1
 \end{pmatrix}
 \begin{pmatrix}
   \bQ \\ \pd_t\bQ
 \end{pmatrix}
\\ =
  \begin{pmatrix}
   A'(-\pd_y) -B'(\pd_y)& 1-e^{-\pd} \\ e^\pd B'(\pd_y)-A'(-\pd_y) & 0
 \end{pmatrix}
 \begin{pmatrix}
   \bQd \\ \pd_t\bQd
 \end{pmatrix}\,.  
\end{multline}
\par
Let
$\mathbf{Q_1}=\widehat{\bQ}$, $\mathbf{Q_2}=\pd_t\widehat{\bQ}$ and
$\mathbf{Q'_1}=\widehat{\bQd}$, $\mathbf{Q'_2}=\pd_t\widehat{\bQd}$.
Then
\begin{multline}
  \label{eq:Darboux2-F}
  \begin{pmatrix}
 C(\eta) & 0 \\ e^\pd B(i\eta)-A(-i\eta) & e^\pd-1
 \end{pmatrix}
 \begin{pmatrix}
   \mathbf{Q_1}(\eta) \\ \mathbf{Q_2}(\eta)
 \end{pmatrix}
\\ =
  \begin{pmatrix}
   A'(-i\eta) -B'(i\eta)& 1-e^{-\pd} \\ C'(\eta) & 0
 \end{pmatrix}
 \begin{pmatrix}
\mathbf{Q'_1}(\eta) \\ \mathbf{Q'_2}(\eta)
 \end{pmatrix}\,,
\end{multline}
where
$C(\eta)= A(-i\eta)-B(i\eta)$ and $C'(\eta)=e^\pd B'(i\eta)-A'(-i\eta)$.
Note that $V^\k$ is independent of $y$.
\par
We will show that for every $\eta\in\R$, a 
Darboux transformation \eqref{eq:Darboux2-F} gives a correspondence
between solutions of \eqref{eq:linear0-F} and those of \eqref{eq:linear1-F}.
\begin{lemma}
  \label{lem:correspondence}
Let $t_0\in\R$ and $\eta\in\R$.   Assume that $\a\in\R\setminus\{0\}$ and
that $\widehat{\bQd}(\eta)$ and $\widehat{\bQ}(\eta)$ are solutions
of \eqref{eq:linear1-F} and \eqref{eq:linear0-F}
in the class $C^2(\R;\ell^2_\a)$, respectivey.
Let $(\mathbf{Q_1}(\eta),\mathbf{Q_2}(\eta))
=(\widehat{\bQ}(\eta),\pd_t\widehat{\bQ}(\eta))$ and
$(\mathbf{Q_1'}(\eta),\mathbf{Q_2'}(\eta))
=(\widehat{\bQd}(\eta),\pd_t\widehat{\bQd}(\eta))$.
If \eqref{eq:Darboux2-F} holds at $t=t_0$, then \eqref{eq:Darboux2-F} holds
for every $t\in\R$.
\end{lemma}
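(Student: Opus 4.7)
The plan is to establish that the pair of identities comprising \eqref{eq:Darboux2-F} is propagated by the joint flow of \eqref{eq:linear0-F} and \eqref{eq:linear1-F}, by writing the two residuals as the solution to a closed linear ODE in $t$ with bounded coefficients and appealing to Cauchy--Lipschitz uniqueness.

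Concretely, I would let $P_1(t), P_2(t) \in \ell^2_\a$ denote the residuals obtained by subtracting the right-hand sides of the two rows of \eqref{eq:Darboux2-F} from the left-hand sides. Because $u_n$ and $v_n$ given by \eqref{eq:un} and \eqref{eq:vn} are uniformly bounded in $n$, and the shift operators $e^{\pm\pd}$ preserve $\ell^2_\a$, the operators $C(\eta)$, $C'(\eta)$, $A(-i\eta)$, $B(i\eta)$, $A'(-i\eta)$, $B'(i\eta)$ are all bounded on $\ell^2_\a$; hence $P(t) = (P_1(t), P_2(t))$ is $C^1$ in $t$ with values in $\ell^2_\a \times \ell^2_\a$. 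I would then differentiate $P$ in $t$ using (i) the trivial identities $\pd_t\mathbf{Q_1} = \mathbf{Q_2}$, $\pd_t\mathbf{Q_1'} = \mathbf{Q_2'}$; (ii) the second-order equations \eqref{eq:linear0-F} and \eqref{eq:linear1-F} to substitute for $\pd_t\mathbf{Q_2}$ and $\pd_t\mathbf{Q_2'}$; and (iii) the explicit time dependence $\pd_t u_n = -V^\k_n$ from \eqref{eq:un} and its analogue $\pd_t v_n$ from \eqref{eq:vn}.

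The heart of the argument is then to recombine the resulting expression into $\pd_t P = \mathcal{M}(t)P$ for some $\mathcal{M}(t)$ that is a bounded operator on $\ell^2_\a \times \ell^2_\a$ depending continuously on $t$. This step rests on the intertwining and factorisation identities \eqref{eq:Darboux1a}--\eqref{eq:MM'} and \eqref{eq:A} from Section~\ref{sec:Darboux}, which express the compatibility of the Darboux transformation with the time evolution of the 2D Toda equation; on linearisation this compatibility forces the $\eta^2$- and $(1+V^\k)$-contributions introduced in step (ii) to be absorbed, together with the $\pd_t u$, $\pd_t v$ contributions from step (iii), into a linear combination of $P_1$ and $P_2$.

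Once $\pd_t P = \mathcal{M}(t)P$ with $\mathcal{M}(t)$ continuous in operator norm is in hand, Gronwall's inequality applied to $\|P(t)\|_{\ell^2_\a \times \ell^2_\a}$ both forward and backward from $t_0$ yields $P(t) \equiv 0$, giving the claim. The main obstacle is precisely the algebraic bookkeeping in the previous paragraph: the derivatives $\pd_t C(\eta) = (1 - e^{-\pd})(\pd_t v - (\pd_t u)e^\pd)$ and its counterpart for $C'(\eta)$ must conspire with the Miura relations \eqref{eq:Miura1a}--\eqref{eq:Miura3} --- which hold because $\tau, \tau'$ satisfy the bilinear equation \eqref{eq:bilinear} and are linked by the B\"acklund transformation \eqref{eq:BT} --- to produce a homogeneous linear system in $(P_1, P_2)$ rather than an inhomogeneous one.
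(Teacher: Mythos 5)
Your proposal is correct and takes essentially the same route as the paper, which gives no details and simply invokes \cite[Lemma~5]{MP}: that reference proves propagation of the linearized B\"acklund/Darboux relation exactly by showing that the residual of the two relations satisfies a closed homogeneous first-order linear system in $t$ with coefficients bounded on $\ell^2_\a$ (using the Miura identities and the boundedness of $u$, $v$ and the shifts), and then concluding by uniqueness/Gronwall forward and backward from $t_0$. The only caveat is that you assert rather than verify the algebraic closure of the residual system --- the absorption of the $\eta^2$, $(1+V^\k)$, $\pd_tu$ and $\pd_tv$ contributions into a combination of $P_1$ and $P_2$ --- but that bookkeeping is precisely the computation carried out in the cited lemma, so your outline matches the intended proof.
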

\begin{proof}
  We can prove Lemma~\ref{lem:correspondence} in the same way as
  \cite[Lemma~5]{MP}.
\end{proof}
\par

If $|\eta|$ is small, mappings $C(\eta)$ and $C'(\eta)$ are not
invertible on $\ell^2_\a$.
First, we will investigate the kernel and the cokernel of
$C(\eta)$ and $C'(\eta)$. 

\begin{lemma}
  \label{lem:kerC-C'}
Let $\a\in(0,\k)$ and let $C(\eta)$ and
$C'(\eta)$ be operators on $\ell^2_\a$.
  \begin{itemize}
\item[\rm{(i)}]
If $\eta\in(-\eta_*(\a),\eta_*(\a))$,
\begin{gather*}
  \ker C(\eta)=\{0\}\,,\quad \ker C'(\eta)=\spann\{g^+(\eta)\}
  \\ \ker C(\eta)^*=\spann\{e^{-\pd}\tg^{+,*}(\eta)\}\,, \quad
  \ker C'(\eta)^*=\{0\}\,.  
\end{gather*}
\item[\rm{(ii)}]  
If $\pm\eta>\eta_*(\a)$,
\begin{gather*}
  \ker C(\eta)=\ker C'(\eta)=\{0\}\,,\quad
  \ker C(\eta)^*=\ker C'(\eta)^*=\{0\}\,.
\end{gather*}
\end{itemize}
\end{lemma}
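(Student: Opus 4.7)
The plan is to identify each of $C(\eta), C'(\eta), C(\eta)^*, C'(\eta)^*$ as a second-order scalar difference operator in $n$, so that its formal pointwise kernel is two-dimensional; the kernel in $\ell^2_\a$ (or $\ell^2_{-\a}$) is then this formal kernel intersected with the weighted sequence space, and its dimension is determined by asymptotic decay rates at $n \to \pm\infty$, controlled via Claim~\ref{cl:beta1-beta2}.

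I would first produce explicit formal solutions for $C'(\eta)$ and $C(\eta)^*$ from Corollary~\ref{cor:Darboux}. Choosing $\beta$ among the roots of $\beta + \beta^{-1} = -2w(-\eta)$ (namely $\beta_\pm(-\eta)$, the two values producing the common $y$-mode $e^{iy\eta}$), the identities $A'(e^{-\pd}\Phi(\beta)/\tau') = B'(e^{-\pd}\Phi(\beta)/\tau') = 0$ give two time-evolution relations for $\tg(n,t) := g(n,s,x)e^{-iy\eta}$ whose combination cancels the $\pd_t$-term and yields exactly $C'(\eta)\tg = 0$, so that $g^+(\eta), g^{+,*}(\eta) \in \ker C'(\eta)$. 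The dual identities $A^*(e^{-\pd}\Phi^0(\beta)/\tau') = B^*(e^{-\pd}\Phi^0(\beta)/\tau') = 0$ similarly produce $e^{-\pd}\tg^+(\eta), e^{-\pd}\tg^{+,*}(\eta) \in \ker C(\eta)^*$. For $C(\eta)$ and $C'(\eta)^*$ I would rely only on the asymptotic characteristic equations at the endpoints, which follow from the constant limits $u_n \to -e^{\mp\k}$, $v_n \to -e^{\pm\k}$ in \eqref{eq:un}--\eqref{eq:vn}; the four asymptotic modes are $-e^{\pm\k}\beta_\mp(\pm\eta)$ at $n \to \pm\infty$, linearly independent because $|\beta_+(\eta)| < 1 < |\beta_-(\eta)|$ for $\eta \ne 0$ by Claim~\ref{cl:beta1-beta2}.

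I would then convert $\ell^2_\a$-integrability of each formal solution into a sharp inequality on $\Re\gamma(\pm\eta) = \log|\beta_-(\eta)|$. Using \eqref{eq:tg1form}--\eqref{eq:tg2*form} and $\sech\k z_n(t) \sim 2e^{-\k|n|}$, membership of $g^+, g^{+,*}, e^{-\pd}\tg^+, e^{-\pd}\tg^{+,*}$ in the relevant weighted space at each endpoint reduces to whether $\Re\gamma(\pm\eta)$ lies in the interval $(\a - \k, \a + \k)$; by Claim~\ref{cl:beta1-beta2} this happens precisely when $\eta \in (-\eta_*(\a), \eta_*(\a))$. The same threshold controls the characteristic moduli $e^{\pm\k}|\beta_\mp(\pm\eta)|$ of $C(\eta), C'(\eta)^*$ relative to $e^{\mp\a}$. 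The bookkeeping then gives, in case (i): $g^+(\eta) \in \ell^2_\a$ while $g^{+,*}(\eta) \notin \ell^2_\a$ (failing at $n\to+\infty$); $e^{-\pd}\tg^{+,*}(\eta) \in \ell^2_{-\a}$ while $e^{-\pd}\tg^+(\eta) \notin \ell^2_{-\a}$ (failing at $n\to-\infty$); and for $C(\eta)$ both characteristic exponents at $n\to+\infty$ yield growth in $\ell^2_\a$, so $\ker C(\eta) = \{0\}$, and symmetrically $\ker C'(\eta)^* = \{0\}$. In case (ii), each of the four explicit solutions and each asymptotic mode decays in the relevant weighted space at exactly one endpoint but grows at the other, and nontrivial linear combinations cannot rescue integrability because the two characteristic rates $\beta_+$ and $\beta_-$ give genuinely distinct exponential rates at each endpoint; hence all four kernels are trivial.

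The main obstacle is the careful bookkeeping of two-sided decay conditions across four operators, two endpoints, and two $\eta$-regimes, requiring consistent matching of each characteristic exponent (or Corollary~\ref{cor:Darboux}-derived formal solution) to the correct side of the sharp threshold from Claim~\ref{cl:beta1-beta2}, together with a linear-independence argument in case (ii) to rule out any accidental bound state where the "good at $+\infty$" and "good at $-\infty$" modes would coincide.
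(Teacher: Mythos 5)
Your overall strategy---two-dimensional formal kernels cut down by the sharp weighted-decay threshold of Claim~\ref{cl:beta1-beta2}---is the same as the paper's, and your bookkeeping of which explicit solution survives in which weighted space is correct where it applies. The paper, however, does not work with Jost-product solutions or asymptotic characteristic roots directly: it conjugates all four operators \emph{exactly} to the constant-coefficient operator $D(\pm\eta)=e^{\pd}+e^{-\pd}\pm 2i\eta+2\cosh\k$ via multiplication by $\tau'$ and difference operators, so that the full formal kernel is known in closed form (e.g.\ $q_n=\tau'_{n-1}\{c_1\beta_+(-\eta)^n+c_2\beta_-(-\eta)^n\}$ for $C(\eta)$), and only then invokes \eqref{eq:taun+1/n} and Claim~\ref{cl:beta1-beta2}. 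This also spares the discrete Levinson/Poincar\'e--Perron input that your purely asymptotic treatment of $C(\eta)$ and $C'(\eta)^*$ would need in order to justify that every formal solution is comparable to one of the asymptotic modes; that input is available here (the coefficients converge exponentially and the two roots have distinct moduli), but you would have to state and use it.

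The genuine gap is at $\eta=0$, which lies inside the interval of case (i) and is precisely the value that matters for the secular modes. Your fundamental system for $\ker C'(\eta)$ consists of $g^+(\eta)=e^{-\pd}\Phi(\beta_+(-\eta))/\tau'$ and $g^{+,*}(\eta)=e^{-\pd}\Phi(\beta_-(-\eta))/\tau'$. At $\eta=0$ one has $\beta_+(0)=a^{-1}$ and $\beta_-(0)=a$, and by \eqref{eq:Jost-zero} $\Phi(a)=-\Phi(a^{-1})$, so $g^{+,*}(0)=-g^+(0)$: your two explicit solutions span only a one-dimensional subspace of the two-dimensional formal kernel of $C'(0)$, and the proposed argument cannot conclude $\ker C'(0)=\spann\{g^+(0)\}$ without producing and excluding the missing second (resonant) formal solution, which is not supplied by Corollary~\ref{cor:Darboux} and grows like $n$ relative to $g^+(0)$. (The operator $C(\eta)^*$ is not affected, since $\Phi^0(\beta_+(-\eta))$ and $\Phi^0(\beta_-(-\eta))$ are never proportional.) In the paper this degeneracy is harmless because the fundamental system $\{\beta_+(-\eta)^n,\beta_-(-\eta)^n\}$ of $D(-\eta)$ never degenerates ($\beta_+(\eta)\ne\beta_-(\eta)$ for all real $\eta$ when $\k>0$); the collapse at $\eta=0$ is absorbed into the kernel of the map $f'\mapsto(1-e^{-\pd})(f'/\tau')$ (note $D(0)\tau'=0$), which only shrinks the candidate set for $\ker C'(0)$. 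To repair your proof, either adopt the paper's conjugation, or treat $\eta=0$ separately by exhibiting the resonant second solution and checking that it does not belong to $\ell^2_\a$.
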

\begin{proof}[Proof of Lemma~\ref{lem:kerC-C'}]
Let $\widetilde{\Psi}=e^{y\cosh\k}\Psi$ so that
$\widetilde{\Psi}$ is independent of $y$.
By \eqref{eq:Darboux1a}--\eqref{eq:A},
\begin{align*}
  A-B=& e^{-\pd}\Psi(L_2-L_1)^*\Psi^{-1}e^\pd\,,
\\  =&  -e^{-\pd}\widetilde{\Psi}(2\pd_y+2\cosh\k+e^\pd+e^{-\pd})\widetilde{\Psi}^{-1}e^\pd\,,
\end{align*}
\begin{align*}
e^\pd B'-A'=&  (1-e^{-\pd})\Psi^{-1}(L_2-L_1)\Psi(1-e^{\pd})^{-1}\,,
  \\= & (1-e^{-\pd})\widetilde{\Psi}^{-1}(2\pd_y-2\cosh\k-e^\pd-e^{-\pd})
        \widetilde{\Psi}(1-e^{\pd})^{-1}\,.
\end{align*}
Let $D(\eta)=e^\pd+e^{-\pd}+2i\eta+2\cosh\k$. Then
\begin{gather*}
C(\eta)= -e^{-\pd}\tau' D(\eta)(\tau')^{-1}e^\pd\,,
  \quad
 C'(\eta)=-(1-e^{-\pd})(\tau')^{-1}D(-\eta)\tau'(1-e^\pd)^{-1}\,.
\end{gather*}

\par
Suppose that $C(\eta)q=0$.
We have $C(\eta)q=0$ if and only if $f=(e^\pd q)/\tau'$ satisfies
\begin{equation}
  \label{eq:kerC1}
  D(\eta)f=0\,.
\end{equation}
Since $\{\beta_+(-\eta)^n,\beta_-(-\eta)^n\}$ is a fundamental system
of \eqref{eq:kerC1}, 
\begin{equation*}
  q_n=\tau'_{n-1}\left\{c_1\beta_+(-\eta)^n+ c_2\beta_-(-\eta)^n\right\}\,,
\end{equation*}
where $c_1$ and $c_2$ are constants.
By \eqref{eq:Miura1b} and \eqref{eq:vn},
\begin{equation}
\label{eq:taun+1/n}
\begin{aligned}
 \frac{\tau_{n+1}'}{\tau_n'}= v_{n+1}
 &  \to
  \begin{cases}
  & -e^\k<-1\quad\text{as $n\to\infty$,}
\\ & -e^{-\k}>-1\quad\text{as $n\to-\infty$.}
  \end{cases}
\end{aligned}
\end{equation}
Combining the above with Claim~\ref{cl:beta1-beta2},
we have $\ker C(\eta)=\{0\}$. We can prove $\ker C'(\eta)^*=\{0\}$
in the same way.
\par

Suppose that $C'(\eta)q'=0$. Then
$f'=\{f'_n\}_{n\in\Z}=\tau'(1-e^{-\pd})^{-1}q'$ satisfies
$$D(-\eta)f'=0\,,$$
and  $f_n'=c_1\beta_+(-\eta)^n+c_2\beta_-(-\eta)^n$,
where $c_1$ and $c_2$ are constants.  By
Claim~\ref{cl:beta1-beta2} and \eqref{eq:taun+1/n},
we have $\ker C'(\eta)=\spann\{g^+(\eta)\}$ if $|\eta|<\eta_*(\a)$
and $\ker C'(\eta)=\{0\}$ if $|\eta|>\eta_*(\a)$.
Similarly, we have
$\ker C(\eta)^*=\spann\{e^{-\pd}\tg^{+,*}\}$ if
$|\eta|<\eta_*(\a)$ and $\ker C(\eta)^*=\{0\}$ if $|\eta|>\eta_*(\a)$.
\end{proof}

Next, we will prove that $C(\eta)$ and $C'(\eta)$ are bijective
if $|\eta|>\eta_*(\a)$.
\begin{lemma}
  \label{lem:CC'-high}
  Let $\a\in(0,2\k)$ and $\eta_0>\eta_*(\a)$. Then, there exists a positive
 constant $K$ such that 
  \begin{equation*}
    \|C(\eta)^{-1}\|_{B(\ell^2_\a)}+\|C'(\eta)^{-1}\|_{B(\ell^2_\a)}
    \le \frac{K}{1+|\eta|}
\quad\text{for $\pm\eta\ge \eta_0$.}    
    \end{equation*}
  \end{lemma}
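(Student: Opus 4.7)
The strategy is to construct explicit inverses of $C(\eta)$ and $C'(\eta)$ from the Green's function of the constant-coefficient difference operator $D(\eta)=e^\pd+e^{-\pd}+2i\eta+2\cosh\k$, using the factorizations
\[
C(\eta)=-e^{-\pd}\tau'\,D(\eta)\,(\tau')^{-1}e^\pd,\qquad
C'(\eta)=-(1-e^{-\pd})(\tau')^{-1}\,D(-\eta)\,\tau'(1-e^\pd)^{-1}
\]
derived in the proof of Lemma~\ref{lem:kerC-C'}. The outer operators $(1-e^{\pm\pd})^{\pm 1}$ are bounded with bounded inverses on $\ell^2_\a$ for $\a>0$, and the $y$-dependent exponential factor in $\tau'$ commutes through $D(\pm\eta)$ and cancels, so $\tau'$ may effectively be replaced by the $y$-independent function $\widetilde\Psi_n=2(-1)^n\cosh(n\k-t\sinh\k)$. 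One then writes $C(\eta)^{-1}r$ as a sum against the kernel $K_\eta(n,m)=(\widetilde\Psi_{n-1}/\widetilde\Psi_{m-1})\,g_\eta(n-m)$, where $g_\eta$ is the $\ell^2$ Green's function of $D(\eta)$, with an analogous formula for $C'(\eta)^{-1}$ in which the roles of $\widetilde\Psi$ and $\widetilde\Psi^{-1}$ are exchanged and $\eta$ is replaced by $-\eta$.

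Since Claim~\ref{cl:beta1-beta2} gives $\beta_+(\eta)\beta_-(\eta)=1$ and $|\beta_+(\eta)|<e^{-\k}<e^\k<|\beta_-(\eta)|$, the Green's function satisfies $|g_\eta(k)|=|\beta_+(\eta)|^{|k|}/(2|\mu(\eta)|)$. I would apply Schur's test to the conjugated kernel $e^{\a(n-m)}K_\eta(n,m)$ on $\ell^2$. The crucial ingredient is the elementary $t$-uniform bound
\[
\biggl|\frac{\widetilde\Psi_{n-1}}{\widetilde\Psi_{m-1}}\biggr|\le 2e^{\k|n-m|},
\]
which follows from $\cosh a/\cosh b\le 2e^{|a-b|}$ together with the reverse triangle inequality $||a|-|b||\le|a-b|$. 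Summing in $m$ then reduces to two geometric series with ratios $|\beta_+(\eta)|e^{\a+\k}$ and $|\beta_+(\eta)|e^{\k-\a}$. The second is automatically smaller than $1$ because $\a>0$ and $|\beta_+(\eta)|<e^{-\k}$. The first is smaller than $1$ precisely when $|\beta_-(\eta)|>e^{\a+\k}$, i.e.\ when $|\eta|>\eta_*(\a)$ by Claim~\ref{cl:beta1-beta2}(iii); the monotonicity of $|\beta_-(\eta)|$ on $(0,\infty)$ upgrades the hypothesis $|\eta|\ge\eta_0>\eta_*(\a)$ to uniform strict inequality, so both series are uniformly summable with bounds depending only on $\a$ and $\eta_0$.

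Schur's test then yields $\|C(\eta)^{-1}\|_{B(\ell^2_\a)}\le C/|\mu(\eta)|$; since $|\mu(\eta)|$ is bounded away from $0$ on $|\eta|\ge\eta_0$ and satisfies $|\mu(\eta)|\sim|\eta|$ as $|\eta|\to\infty$, this gives the desired $K/(1+|\eta|)$ bound, and the transpose sum of Schur's test is controlled identically by symmetry of the kernel. The argument for $C'(\eta)^{-1}$ is structurally the same: $|g_{-\eta}(k)|=|g_\eta(k)|$ by the evenness in Claim~\ref{cl:beta1-beta2}(i), and $|\widetilde\Psi_{m-1}/\widetilde\Psi_{n-1}|$ satisfies the same $t$-uniform bound. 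The main obstacle is securing this $t$-independent estimate on the ratio $\widetilde\Psi_{n-1}/\widetilde\Psi_{m-1}$: the one-sided asymptotics $|\widetilde\Psi_n|\sim\tfrac12 e^{|n\k-t\sinh\k|}$ would introduce factors of $e^{|t|\sinh\k}$ that cannot be absorbed into the constant $K$, whereas estimating the ratio directly through $\cosh a/\cosh b$ respects the translation symmetry that shifts the center of $\widetilde\Psi$ and delivers the uniform constant required by the statement.
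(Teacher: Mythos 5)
Your proposal is correct and follows essentially the same route as the paper: the same factorizations through $D(\pm\eta)$, the same explicit Green's kernel $|k_m(\eta)|=|\beta_+(\eta)|^{|m|}/2|\mu(\eta)|$, the same $t$-uniform ratio bound $|\tau'_n/\tau'_{n-m}|\le 2e^{\k|m|}$ from $\cosh a/\cosh b\le 2e^{|a-b|}$, and the same summability condition $|\beta_-(\eta)|>e^{\k+\a}$ supplied by Claim~\ref{cl:beta1-beta2} for $|\eta|\ge\eta_0>\eta_*(\a)$. The only cosmetic difference is that you invoke Schur's test where the paper applies Young's inequality to the resulting convolution bound; these are interchangeable here.
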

  \begin{proof}[Proof of Lemma~\ref{lem:CC'-high}]
Since $e^{\pm\pd}$ and $(e^\pd-1)^{-1}$ are bounded on $\ell^2_\a$
by Lemmma~\ref{lem:pd-1},
it suffices to show that for $\pm\eta\ge \eta_0$,
\begin{equation*}
\left\|\tau'D(\eta)^{-1}(\tau')^{-1}\right\|_{B(\ell^2_\a)}
 +
 \left\|(\tau')^{-1}D(-\eta)^{-1}\tau'\right\|_{B(\ell^2_\a)}
=O(\la \eta\ra^{-1})\,.
\end{equation*}
It follows from Claim~\ref{cl:beta1-beta2} that for every
 $\eta_0>\eta_*(\eta)$,  there exists $\eps\in(0,1)$
 such that
 \begin{equation}
   \label{eq:kn-bound}
   |\beta_+(\eta)|=|\beta_-(\eta)|^{-1}\le \eps e^{-(\k+\a)}
   \quad\text{if $\pm\eta\ge \eta_0$.}
 \end{equation}
Thus by Lemma~\ref{lem:fundsol-seq}, $k(\eta)=\{k_n(\eta)\}_{n\in\Z}$  with
 \begin{equation}
 \label{eq:def-kn}
 k_n(\eta)=
\frac{\beta_-(\eta)^{-|n|}}{2\mu(\eta)}
 \end{equation}
 is a Green kernel of $D(\eta)$ on $\ell^2_\a$.
 \par
 
Let $\tau'=\{\tau'_n\}_{n\in\Z}$,  $f=\{f_n\}_{n\in\Z}\in \ell^2_\a$,
$u=\{u_n\}_{n\in\Z}$ and
 \begin{equation}
   \label{eq:sol-Du=f}
   u=\tau'k(\eta)*
   \left(\frac{f}{\tau'}\right)\
 \end{equation}
 so that $\tau'D(\eta)(\tau')^{-1}u=f$.
 By \eqref{def:taun'} and the fact that  $\cosh a/\cosh b\le 2e^{|a-b|}$
 for $a$, $b\in\R$, we have $|\tau_n'/\tau_{n-m}'|\le 2e^{\k|m|}$ and
 \begin{align*}
   e^{\pm\a n}|u_n|\le & e^{\pm\a n}\sum_{m\in\Z}
\left| k_m(\eta)\frac{\tau_n'}{\tau_{n-m}'}f_{n-m}\right|
   \\ \le & 2\sum_{m\in\Z} (e^{(\kappa+\a)|m|}|k_m(\eta)|)e^{\pm\a(n-m)}
            |f_{n-m}|\,.
 \end{align*}
 Combining the above with \eqref{eq:kn-bound} and the definition
 of $k(\eta)$, we have
 \begin{align*}
   \|u\|_{\ell^2_{\pm\a}} \le &
2\left(\sum_{m\in\Z}\{e^{(\k+\a)|m|}|k_m(\eta)|\right)
\|f\|_{\ell^2_{\pm\a}}
   \\ \le & \frac{1+\eps}{(1-\eps)|\mu(\eta)|}| \|f\|_{\ell^2_{\pm\a}}\,,
 \end{align*}
 and for $\eta\in\R$ satisfying $|\eta|\ge \eta_0$,
 \begin{equation}
   \label{eq:D(eta)-bound}
\|\tau'D(\eta)^{-1}(\tau')^{-1}\|_{B(\ell^2_{\pm\a})}\le  K'\la \eta\ra^{-1}\,,
\end{equation}
where $K'$ is a constant that does not depend on $\eta$.
We can esimate $(\tau')^{-1}D(-\eta)^{-1}\tau'$
in the same way. Thus we complete the proof.
\end{proof}

\begin{lemma}
  \label{lem:C-low}
Let $\a\in(0,2\k)$, $\eta_0\in(0,\eta_*(\a))$ and $\eta\in[-\eta_0,\eta_0]$.
If $f\in\ell^2_\a$ and
\begin{equation}
  \label{eq:orth-1}
  \la f, e^{-\pd}\tg^{+,*}(\eta)\ra
  =\sum_{n\in\Z} f_n\overline{\tg^{+,*}_{n-1}(\eta)}=0\,,
\end{equation}
then $C(\eta)u=f$ has a solution satisfying
\begin{equation*}
 \|u\|_{\ell^2_\a}\le K \|f\|_{\ell^2_\a}\,,
\end{equation*}
where $K$ is a positive constant that depends only on $\a$ and $\eta_0$.
\end{lemma}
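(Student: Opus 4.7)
Proof plan. The conjugation $C(\eta) = -e^{-\pd}\tau' D(\eta)(\tau')^{-1}e^\pd$ derived in the proof of Lemma~\ref{lem:kerC-C'}, with $D(\eta) = e^\pd+e^{-\pd}+2w(\eta)$, reduces $C(\eta)u = f$ to the constant-coefficient difference equation $D(\eta)g = h$ where $g_n := u_{n+1}/\tau'_n$ and $h_n := -f_{n+1}/\tau'_n$. Because $|\tau'_n|\asymp e^{\k|n-n_0|}$ with $n_0 := t\sinh\k/\k$, routine estimates give $\|h\|_{\ell^2_{\a\pm\k}} \lesssim e^{\mp\k n_0}\|f\|_{\ell^2_\a}$; the $e^{\pm\k n_0}$ prefactors are harmless, as they will cancel when we recover $u$ from $g$. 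Using \eqref{eq:tg1*form} together with the symmetries $\overline{\beta_-(-\eta)} = \beta_-(\eta)$ and $\overline{\gamma(-\eta)} = \gamma(\eta)$, and the reality of $\tau'$ modulo the $n$-independent factor $e^{-y\cosh\k}$, the hypothesis $\la f, e^{-\pd}\tg^{+,*}(\eta)\ra = 0$ becomes the single scalar constraint
\begin{equation*}
  (\star) \qquad \sum_{m\in\Z}\beta_-(\eta)^m h_m = 0.
\end{equation*}

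The homogeneous solutions of $D(\eta)g = 0$ are $\beta_\pm(\eta)^n$, and $K_j(\eta) := \beta_+(\eta)^{|j|}/(2\mu(\eta))$ is the standard Green's function already used in the proof of Lemma~\ref{lem:CC'-high}. Setting $g_n = (K(\eta)*h)_n$, splitting the convolution at $m = n$, and using $\beta_+\beta_- = 1$,
\begin{equation*}
  g_n = \frac{1}{2\mu(\eta)}\Bigl[\beta_+(\eta)^n\sum_{m\le n}\beta_-(\eta)^m h_m \;+\; \beta_-(\eta)^n\sum_{m>n}\beta_+(\eta)^m h_m\Bigr].
\end{equation*}
The leading behaviour as $n\to+\infty$ is $\beta_+(\eta)^n\cdot(2\mu)^{-1}\sum_m\beta_-(\eta)^m h_m$, decaying only at rate $\gamma_R(\eta)$; by Claim~\ref{cl:beta1-beta2}(iii), $\gamma_R(\eta) < \k+\a$ strictly on $[-\eta_0,\eta_0]$, so this rate is insufficient for $u\in\ell^2_\a$ unless the leading coefficient vanishes. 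The orthogonality $(\star)$ annihilates it, and substituting $\sum_{m\le n}\beta_-^m h_m = -\sum_{m>n}\beta_-^m h_m$ produces the equivalent representation, valid for every $n\in\Z$,
\begin{equation*}
  g_n = \frac{1}{2\mu(\eta)}\sum_{k\ge 1}\bigl(\beta_+(\eta)^k - \beta_-(\eta)^k\bigr)h_{n+k}.
\end{equation*}

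For $n\ge n_0$ we use the rewritten formula: uniformly on $[-\eta_0,\eta_0]$, both $|\beta_+(\eta)|e^{-(\a+\k)}<1$ and, decisively, $|\beta_-(\eta)|e^{-(\a+\k)} = e^{\gamma_R(\eta)-\a-\k} \le e^{-\eps_0}$ for some $\eps_0>0$, so Young's convolution inequality yields $\|g\,\mathbf{1}_{n\ge n_0}\|_{\ell^2_{\a+\k}} \lesssim \|h\|_{\ell^2_{\a+\k}}$. For $n\le n_0$ we use the original $g = K(\eta)*h$: since $\gamma_R(\eta) > \k \ge |\a-\k|$ when $\a\in(0,2\k)$, the kernel $K(\eta)$ is bounded as a convolution operator on $\ell^2_{\a-\k}$, giving $\|g\,\mathbf{1}_{n\le n_0}\|_{\ell^2_{\a-\k}} \lesssim \|h\|_{\ell^2_{\a-\k}}$. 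Reconstructing $u_{n+1} = \tau'_n g_n$ and using $|\tau'_n|\lesssim e^{\k|n-n_0|}$, the $e^{\pm\k n_0}$ prefactors cancel between the $h$-from-$f$ and $u$-from-$g$ steps, giving $\|u\|_{\ell^2_\a} \le K\|f\|_{\ell^2_\a}$ with constant uniform in $t$ and in $\eta\in[-\eta_0,\eta_0]$ (uniformity in $\eta$ relies on $\mu(\eta)$ being bounded away from $0$, which follows from $\mu(0) = \sinh\k > 0$, together with the strict gap $\eps_0$).

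The main obstacle is that no standard weighted $\ell^2$-space can be used to invert $D(\eta)$ uniformly on $[-\eta_0,\eta_0]$: the decay rate $\gamma_R(\eta)$ approaches $\k+\a$ as $\eta\to\pm\eta_*(\a)$, and the naive convolution estimate fails on the $+\infty$ side. The scalar orthogonality hypothesis is exactly what removes the slowly decaying mode; making this cancellation manifest by reindexing into a forward convolution $\sum_{k\ge 1}(\beta_+^k-\beta_-^k)h_{n+k}$ and then bounding via Young's inequality is the essential step. Tracking the $\tau'$-prefactors carefully to obtain a $t$-uniform constant is the remaining bookkeeping.
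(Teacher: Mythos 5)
Your proof is correct and follows essentially the same route as the paper's: the same conjugation $C(\eta)=-e^{-\pd}\tau'D(\eta)(\tau')^{-1}e^\pd$, the same Green's kernel $\beta_+(\eta)^{|j|}/(2\mu(\eta))$, and the same use of the orthogonality condition to flip the backward half-sum into a forward one on the region $n\gtrsim t\sinh\k/\k$ with Young's inequality closing each piece; the paper merely keeps everything in the variable $u$ and the single weight $\ell^2_\a$ (via the ratios $\tau'_{n-1}/\tau'_{m-1}$) instead of passing to $g=u/\tau'$ in the shifted weights $\ell^2_{\a\pm\k}$. The only slip is the sign in the intermediate claim $\|h\|_{\ell^2_{\a\pm\k}}\lesssim e^{\mp\k n_0}\|f\|_{\ell^2_\a}$ (it should be $e^{\pm\k n_0}$), which is harmless since, as you note, these prefactors cancel exactly against those arising from $u_{n+1}=\tau'_n g_n$.
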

\begin{proof}[Proof of Lemma~\ref{lem:C-low}]
Let 
\begin{align*}
u:= -e^{-\pd}\tau'k(\eta)* \left(\frac{1}{\tau'}e^\pd f\right)\,,
\end{align*}
and $u=\{u_n\}_{n\in\Z}$. Then
\begin{equation*}
  u_n=-\sum_{m\in\Z}k_{n-m}(\eta)\frac{\tau_{n-1}'}{\tau_{m-1}'}f_m\,,
\end{equation*}
We formally have $C(\eta)u=f$ from \eqref{eq:sol-Du=f}
since $C(\eta)=-e^{-\pd}\tau'D(\eta)(\tau')^{-1}e^\pd$.
\par
By Claim~\ref{cl:beta1-beta2} and \eqref{eq:def-kn},
\begin{equation}
  \label{eq:kn-m}
  \sup_{\eta\in\R} |k_{n-m}(\eta)|\lesssim e^{-\k|n-m|}\,.
\end{equation}
By \eqref{def:taun'}, we have for every $m$, $n\in\Z$,
\begin{equation}
  \label{eq:taun/taum-1}
\left|\frac{\tau_{n-1}'}{\tau_{m-1}'}\right|\le 2e^{|m-n|\k}\,.
\end{equation}
If $m\ge n\ge 1+t\sinh \k/\k$ or $m\le n\le 1+t\sinh\k/\k$,
\begin{equation}
  \label{eq:taun/taum-2}
\left|\frac{\tau_{n-1}'}{\tau_{m-1}'}\right|\le 2e^{-|m-n|\k}\,.
\end{equation}
Combining \eqref{eq:kn-m} and \eqref{eq:taun/taum-1}, we have
\begin{align*}
  \left\|\sum_{m\ge n}k_{n-m}(\eta)\frac{\tau_{n-1}'}{\tau_{m-1}'}
  f_m\right\|_{\ell^2_\a}
\lesssim &
 \left\| \sum_{m\ge n}e^{\a n}|f_m| \right\|_{\ell^2}
\\ =&
\left\|\{e^{\a n}\}_{n\le0}*\{e^{\a n}f_n\}\right\|_{\ell^2}
\lesssim \|f\|_{\ell^2_\a}\,.
\end{align*}
By \eqref{eq:kn-m} and \eqref{eq:taun/taum-2},
\begin{multline*}
   \left\|\sum_{m< n}k_{n-m}(\eta)
\frac{\tau_{n-1}'}{\tau_{m-1}'}f_m\right\|_{\ell^2_\a(n\le 1+t\sinh\k/\k)}
 \lesssim 
 \left\| \sum_{m<n}e^{\a n-2\k|n-m|}|f_m| \right\|_{\ell^2}
\\  \qquad  =  \left\|\{e^{(\a-2\k)n}\}_{n\ge0}*\{e^{\a n}f_n\}\right\|_{\ell^2}
\lesssim \|f\|_{\ell^2_\a}\,.
\end{multline*}
\par
Finally, we will estimate
\begin{equation*}
 \left\|\sum_{m< n}k_{n-m}(\eta)
\frac{\tau_{n-1}'}{\tau_{m-1}'}f_m\right\|_{\ell^2_\a(n\ge 1+t\sinh\k/\k)}\,.
\end{equation*}
By \eqref{eq:orth-1},
\begin{align*}
  \la f,e^{-\pd}\tg^{+,*}(\eta)\ra=&
e^{\beta_-(\eta)x-\beta_+(\eta)s}
\sum_{m\in\Z} \frac{f_m\beta_-(\eta)^{m-1}}{\tau_{m-1}'}
=0\,.                       
\end{align*}
Hence, it follows that
\begin{align*}
  \sum_{m<n}k_{n-m}(\eta)\frac{\tau_{n-1}'}{\tau_{m-1}'}f_m=&
\frac{\beta_-(\eta)^{-n+1}\tau_{n-1}'}{2\mu(\eta)}
\sum_{m<n}  \frac{f_m\beta_-(\eta)^{m-1}}{\tau_{m-1}'}
  \\=&
-\frac{\beta_-(\eta)^{-n+1}\tau_{n-1}'}{2\mu(\eta)}
 \sum_{m\ge n} \frac{f_m\beta_-(\eta)^{m-1}}{\tau_{m-1}'}
  \\=& - \sum_{m\ge n}\frac{\beta_-(\eta)^{m-n}}{2\mu(\eta)}
       \frac{\tau_{n-1}'}{\tau_{m-1}'}f_m\,.
\end{align*}
By the assumption, there exists an $\eps\in(0,1)$ such that
$|\beta_-(\eta)|\le \eps e^{(\k+\a)}$ for $\eta\in[-\eta_0,\eta_0]$.
If $m\ge n\ge1+t\sinh\k/$ and $t\ge0$, we have
$|\tau_{n-1}'/\tau_{m-1}'|\le 2e^{(n-m)\k}$ and
\begin{align*}
e^{\a n}\left|\sum_{m<n}k_{n-m}(\eta)\frac{\tau_{n-1}'}{\tau_{m-1}'}f_m\right|
 \lesssim &
\sum_{m\ge n}\eps^{m-n}e^{\a m}|f_m|\,.
\end{align*}
Using Young's inequality, we have
\begin{align*}
  \left\|e^{\a n}\sum_{m<n}k_{n-m}(\eta)\frac{\tau_{n-1}'}{\tau_{m-1}'}f_m
  \right\|_{\ell^2(n\ge 1+t\sinh\k t/\k)}
 \lesssim &
\left\|\sum_{m\ge n}\eps^{m-n}e^{\a m}|f_m|\right\|_{\ell^2}
  \\\lesssim & \frac{1}{1-\eps}\|f\|_{\ell^2_\a}\,. 
\end{align*}
Thus we prove Lemma~\ref{lem:C-low}.
\end{proof}

For $\mathbf{q}=\{(q_1(n),q_2(n))^T\}_{n\in\Z}$ and
$\mathbf{q^*}=\{(q_1^*(n),q_2^*(n))^T\}_{n\in\Z}$,
let
\begin{equation*}
\left\la \mathbf{q},\mathbf{q^*}\right\ra
  =\sum_{n\in\Z} \left\{q_1(n)\overline{q_1^*(n)}+
    q_2(n)\overline{q_2^*(n)}\right\}\,.
\end{equation*}
If $\eta$ is close to $0$, then \eqref{eq:Darboux2-F} is solvable in
$(\mathbf{Q_1}(\eta),\mathbf{Q_2}(\eta))$ under an orthogonality codition on
$(\mathbf{Q_1'}(\eta),\mathbf{Q_2'}(\eta))$.
\begin{lemma}
  \label{lem:sol-Q}
Assume that $\a\in(0,2\k)$ and that $|\eta|\le \eta_0<\eta_*(\a)$. If
\begin{equation}
  \label{eq:orth-Q1'Q2'}
  \left\la
\begin{pmatrix}
  \mathbf{Q_1'}(\eta) \\ \mathbf{Q_2'}(\eta)
\end{pmatrix},
\begin{pmatrix}
 \pd_tg^{+,*}(\eta) \\ -g^{+,*}(\eta)  
\end{pmatrix}
\right\ra=0\,,
\end{equation}
then there exists a unique $(\mathbf{Q_1}(\eta), \mathbf{Q_2}(\eta))$
satisfying \eqref{eq:Darboux2-F} and
\begin{equation}
\label{eq:lem-solQ}
\|\mathbf{Q_1}(\eta)\|_{\ell^2_\a}+\|\mathbf{Q_2}(\eta)\|_{\ell^2_\a}
\le K\left(\|\mathbf{Q_1'}(\eta)\|_{\ell^2_\a}
  +\|\mathbf{Q_2'}(\eta)\|_{\ell^2_\a}\right)\,,
\end{equation}
where $K$ is a positive constant $K$ that depends only on $\a$ and $\eta_0$.
\end{lemma}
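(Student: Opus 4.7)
The plan is to treat \eqref{eq:Darboux2-F} as a triangular system and solve it from the top down. Setting
\[
  f_1 := (A'(-i\eta) - B'(i\eta))\mathbf{Q_1'}(\eta) + (1-e^{-\pd})\mathbf{Q_2'}(\eta),
\]
the first row reads $C(\eta)\mathbf{Q_1}(\eta) = f_1$. Lemma~\ref{lem:kerC-C'} gives $\ker C(\eta) = \{0\}$ and $\ker C(\eta)^* = \spann\{e^{-\pd}\tg^{+,*}(\eta)\}$ on $\ell^2_\a$ for $|\eta|\le \eta_0 < \eta_*(\a)$, so Lemma~\ref{lem:C-low} produces a unique $\mathbf{Q_1}(\eta)\in \ell^2_\a$ as soon as we can verify $\la f_1, e^{-\pd}\tg^{+,*}(\eta)\ra = 0$. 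Once $\mathbf{Q_1}(\eta)$ is known, the second row
\[
 (e^\pd - 1)\mathbf{Q_2}(\eta) = C'(\eta)\mathbf{Q_1'}(\eta) - (e^\pd B(i\eta) - A(-i\eta))\mathbf{Q_1}(\eta)
\]
can be inverted using the isomorphism $e^\pd - 1 : \ell^2_\a \to \ell^2_\a$ for $\a > 0$ (Lemma~\ref{lem:pd-1}), yielding $\mathbf{Q_2}(\eta)$ and uniqueness of the whole pair.

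The heart of the argument is to transfer the vector orthogonality \eqref{eq:orth-Q1'Q2'} into the scalar condition $\la f_1, e^{-\pd}\tg^{+,*}(\eta)\ra = 0$. Pairing by adjoints reduces this to the two identities
\[
 (1 - e^\pd) e^{-\pd}\tg^{+,*}(\eta) = -g^{+,*}(\eta),\quad
 [(A'(-i\eta))^* - (B'(i\eta))^*] e^{-\pd}\tg^{+,*}(\eta) = \pd_t g^{+,*}(\eta).
\]
The first is immediate from $(1 - e^\pd)e^{-\pd} = -(1 - e^{-\pd})$ and \eqref{eq:g-tg}. For the second, I plan to exploit Corollary~\ref{cor:Darboux}: the relations $A^* e^{-\pd}\tg^{+,*}(\eta) = 0$ and $B^* e^{-\pd}\tg^{+,*}(\eta) = 0$, unpacked through $\pd_s = \pd_t - \pd_y$ and $\pd_x = \pd_t + \pd_y$ acting on the $e^{iy\eta}$-factor of $\tg^{+,*}(\eta)$, yield two expressions for $\pd_t e^{-\pd}\tg^{+,*}(\eta)$, whose difference is the pointwise relation
\[
 v_n g^{+,*}_n(\eta) - u_{n-1}\, g^{+,*}_{n-1}(\eta) = 2i\eta\, \tg^{+,*}_{n-1}(\eta).
\]
Expanding the target with $(A'(-i\eta))^* = i\eta + u(e^\pd - 1)$, $(B'(i\eta))^* = -i\eta e^\pd + v(e^\pd - 1)$ and \eqref{eq:g-tg} gives $i\eta(1 + e^{-\pd})\tg^{+,*}(\eta) + (u - v)g^{+,*}(\eta)$ on the left, while the Fourier-reduced form of $A' g^{+,*}(\eta) = 0$ gives $\pd_t g^{+,*}(\eta) = i\eta g^{+,*}(\eta) + (1 - e^{-\pd})(u g^{+,*}(\eta))$ on the right. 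Subtracting, using $g^{+,*} = (1 - e^{-\pd})\tg^{+,*}$, and reading off the $n$th coefficient collapses the difference to exactly the pointwise relation just derived. This bookkeeping between shift operators, the $\pd_t$-part of $\pd_s, \pd_x$, and the transition $\tg^{+,*}\leftrightarrow g^{+,*}$ is the main technical obstacle.

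For the estimate \eqref{eq:lem-solQ}, formulas \eqref{eq:un}--\eqref{eq:vn} show that $u_n, v_n$ are uniformly bounded in $(n,t)$, so $A'(-i\eta)$, $B'(i\eta)$, $A(-i\eta)$, $B(i\eta)$, $C'(\eta)$ are bounded operators on $\ell^2_\a$ with norms uniform for $|\eta| \le \eta_0$. Hence $\|f_1\|_{\ell^2_\a} \lesssim \|\mathbf{Q_1'}(\eta)\|_{\ell^2_\a} + \|\mathbf{Q_2'}(\eta)\|_{\ell^2_\a}$, and Lemma~\ref{lem:C-low} promotes this to the same bound on $\|\mathbf{Q_1}(\eta)\|_{\ell^2_\a}$. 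Inverting $e^\pd - 1$ on the second row then controls $\|\mathbf{Q_2}(\eta)\|_{\ell^2_\a}$ by $\|\mathbf{Q_1'}(\eta)\|_{\ell^2_\a} + \|\mathbf{Q_1}(\eta)\|_{\ell^2_\a}$, completing \eqref{eq:lem-solQ}.
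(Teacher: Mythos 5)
Your proposal is correct and follows essentially the same route as the paper: reduce \eqref{eq:Darboux2-F} to the solvability of $C(\eta)\mathbf{Q_1}(\eta)=f_1$, invoke Lemmas~\ref{lem:kerC-C'} and \ref{lem:C-low} for existence, uniqueness and the uniform bound, identify the cokernel condition $\la f_1, e^{-\pd}\tg^{+,*}(\eta)\ra=0$ with \eqref{eq:orth-Q1'Q2'} by passing to adjoints, and recover $\mathbf{Q_2}(\eta)$ from the second row by inverting $e^\pd-1$ via Lemma~\ref{lem:pd-1}. The one divergence is that the paper gets the key identity $\{A'(-i\eta)-B'(i\eta)\}^*e^{-\pd}\tg^{+,*}(\eta)=\pd_tg^{+,*}(\eta)$ by citing \eqref{eq:clpd-t,AB-2} of Claim~\ref{cl:pd-t,AB} (proved in Appendix~\ref{sec:appendix2} via Lemma~\ref{lem:Darboux} and a residue computation), whereas you rederive it directly from Corollary~\ref{cor:Darboux}; your pointwise relation $v_ng^{+,*}_n-u_{n-1}g^{+,*}_{n-1}=2i\eta\,\tg^{+,*}_{n-1}$ is exactly what results from subtracting the $A^*$- and $B^*$-equations, and the subsequent cancellation checks out.
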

To prove Lemma~\ref{lem:sol-Q}, we need the following.
\begin{claim}
  \label{cl:pd-t,AB}
  \begin{gather}
  \label{eq:clpd-t,AB-1}
\left\{A'(-i\eta)-B'(i\eta)\right\}g^+(\eta)+(1-e^{-\pd})\pd_tg^+(\eta)=0\,,
  \\   \label{eq:clpd-t,AB-2}
\pd_tg^{+,*}(\eta)=\left\{A'(-i\eta)-B'(i\eta)\right\}^*e^{-\pd}\tg^{+,*}(\eta)\,,
\end{gather}
\begin{multline}
   \label{eq:clpd-t,AB-3}
  \left\{A'(-i\eta)-B'(i\eta)\right\}g^-(\eta)
  +(1-e^{-\pd})\pd_tg^-(\eta)
 = -2i\eta (1-e^{-2\pd}) \tg^-(\eta)\,,  
\end{multline}
\begin{multline}
   \label{eq:clpd-t,AB-4}
\pd_tg^{-,*}(\eta)=\left\{A'(-i\eta)-B'(i\eta)\right\}^*e^{-\pd}\tg^{-,*}(\eta)
 -2i\eta(1+e^{-\pd})\tg^{-,*}(\eta)\,.  
\end{multline}
\end{claim}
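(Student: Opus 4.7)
All four identities are consequences of two elementary facts. First, Lemma~\ref{lem:tgform} shows that each of $g^\pm(\eta)$, $g^{\pm,*}(\eta)$, $\tg^\pm(\eta)$, $\tg^{\pm,*}(\eta)$ has a pure $y$-dependence $e^{iy\eta}$, so the substitution $\pd_y\mapsto i\eta$ is exact on them. Second, using $\pd_s=\pd_t-\pd_y$ and $\pd_x=\pd_t+\pd_y$, the Darboux operators split as $A'(\pd_s)=\pd_t+A'(-i\eta)$ and $B'(\pd_x)=e^{-\pd}\pd_t+B'(i\eta)$, so that subtracting their actions on any such sequence produces the combination $A'(-i\eta)-B'(i\eta)$ paired against $-(1-e^{-\pd})\pd_t$. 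That structure is exactly what appears on the left-hand sides of \eqref{eq:clpd-t,AB-1} and \eqref{eq:clpd-t,AB-3}, and on the right-hand sides of \eqref{eq:clpd-t,AB-2} and \eqref{eq:clpd-t,AB-4} after passing to adjoints.

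For \eqref{eq:clpd-t,AB-1}, Corollary~\ref{cor:Darboux} applied with $\beta=\beta_+(-\eta)$ gives $A'g^+(\eta)=B'g^+(\eta)=0$, and subtracting the two splittings yields the identity immediately. For \eqref{eq:clpd-t,AB-3}, I apply the full Lemma~\ref{lem:Darboux} with $\beta_1=a$ and $\beta_2=\beta_-(\eta)$. The essential short computation is the pointwise identity
\begin{equation*}
u_n\Phi_{n-1}(a)\Phi^{0,*}_n(\beta)=\Phi_n(a)\Phi^{0,*}_n(\beta)\qquad\text{for every }\beta\in\C\,,
\end{equation*}
which follows from $u_n=\tau'_{n-1}/\tau'_n$, the ratio $\tau'_{n-1}/\tau'_n=-\cosh((n-1)\k-t\sinh\k)/\cosh(n\k-t\sinh\k)$ implied by \eqref{def:taun'}, and the explicit form \eqref{eq:Jost-zero} of $\Phi(a)$. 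Specializing to $\beta=\beta_-(\eta)$ gives $u(e^{-\pd}\Phi(a))\Phi^{0,*}(\beta_-(\eta))=2i\eta\,\tg^-(\eta)$; substituting this into the two formulas of Lemma~\ref{lem:Darboux} and collapsing via $g^-=(1-e^{-\pd})\tg^-$ produces the right-hand side $-2i\eta(1-e^{-2\pd})\tg^-$ of \eqref{eq:clpd-t,AB-3}.

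The adjoint identities \eqref{eq:clpd-t,AB-2} and \eqref{eq:clpd-t,AB-4} are the main obstacle, since they require switching between primed and unprimed adjoints via Claim~\ref{cl:A,B,dual}. For \eqref{eq:clpd-t,AB-2}, I start from the dual half of Corollary~\ref{cor:Darboux}, namely $A^*e^{-\pd}\tg^{+,*}(\eta)=B^*e^{-\pd}\tg^{+,*}(\eta)=0$ with $\beta=\beta_-(-\eta)$, and translate $A^*,B^*$ into $(A')^*,(B')^*$ conjugated by the shift factors $(1-e^{-\pd})^{\pm1}$ and $(e^\pd-1)^{\pm1}$. Collecting terms componentwise with the help of $g^{+,*}=(1-e^{-\pd})\tg^{+,*}$ reduces the claim to the arithmetic identity
\begin{equation*}
2i\eta\,\tg^{+,*}_n=v_{n+1}g^{+,*}_{n+1}-u_n g^{+,*}_n\,,
\end{equation*}
which, after dividing out the common factor $\Phi^0_n(\beta_-(-\eta))/\tau'_n$, is equivalent to $\beta_-(-\eta)+\beta_+(-\eta)-(u_n+v_{n+1})=2i\eta$. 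Since $\beta_+(-\eta)+\beta_-(-\eta)=-2w(-\eta)=-2\cosh\k+2i\eta$, this reduces further to $u_n+v_{n+1}=-2\cosh\k$, a one-line calculation from the recursion $\tau'_{n-1}+\tau'_{n+1}=-2\cosh\k\,\tau'_n$ implicit in \eqref{def:taun'}.

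Identity \eqref{eq:clpd-t,AB-4} is handled by the same adjoint mechanism, now starting from $A^*e^{-\pd}\tg^{-,*}$ and $B^*e^{-\pd}\tg^{-,*}$. The pointwise identity above, specialized to $\beta=\beta_+(\eta)$, provides $u(e^{-\pd}\Phi(a))\Phi^{0,*}(\beta_+(\eta))=2i\eta\,\tg^{-,*}(\eta)$, and this is again the algebraic heart of the computation. The extra boundary term $-2i\eta(1+e^{-\pd})\tg^{-,*}$ on the right of \eqref{eq:clpd-t,AB-4} arises exactly as did the term $-2i\eta(1-e^{-2\pd})\tg^-$ in \eqref{eq:clpd-t,AB-3}, but with the shift factor reorganized by the adjoint transposition. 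The principal bookkeeping difficulty throughout the adjoint identities is tracking which shift factor sits on which side when translating via Claim~\ref{cl:A,B,dual}, and separating the $\pd_t$ contribution from $A',B'$ without conflating it with $A,B$.
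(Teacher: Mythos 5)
Your treatment of \eqref{eq:clpd-t,AB-1}--\eqref{eq:clpd-t,AB-3} is correct and in places more direct than the paper's. The splitting $A'(\pd_s)-B'(\pd_x)=(1-e^{-\pd})\pd_t+A'(-i\eta)-B'(i\eta)$ on $y$-eigenfunctions, combined with Corollary~\ref{cor:Darboux} for $g^+$ and with Lemma~\ref{lem:Darboux} plus $\Phi(a)=ue^{-\pd}\Phi(a)$ for $g^-$, gives \eqref{eq:clpd-t,AB-1} and \eqref{eq:clpd-t,AB-3} without the residue calculus and the detour through $C'(\eta)$ that the paper uses. Your reduction of \eqref{eq:clpd-t,AB-2} also checks out: from \eqref{eq:corDarboux-2} one reads off $\pd_s\tg^{+,*}_n=u_ng^{+,*}_n$ and $\pd_x\tg^{+,*}_n=v_{n+1}g^{+,*}_{n+1}$, which yield both your arithmetic identity $2i\eta\,\tg^{+,*}_n=v_{n+1}g^{+,*}_{n+1}-u_ng^{+,*}_n$ and its $\pd_t$ companion $2\pd_t\tg^{+,*}_n=v_{n+1}g^{+,*}_{n+1}+u_ng^{+,*}_n$ (you need both to recover $\pd_tg^{+,*}$, not just the first), and the inputs $u_n+v_{n+1}=-2\cosh\k$ and $\beta_+(-\eta)+\beta_-(-\eta)=-2\cosh\k+2i\eta$ are correct.

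The genuine gap is \eqref{eq:clpd-t,AB-4}. You propose to run ``the same adjoint mechanism, now starting from $A^*e^{-\pd}\tg^{-,*}$ and $B^*e^{-\pd}\tg^{-,*}$,'' but these quantities are not zero: \eqref{eq:corDarboux-2} applies to sequences of the form $\Phi^0(\beta)/\tau'$, i.e.\ to $\tg^{+,*}$, whereas $\tg^{-,*}(\eta)=\tfrac{1}{2i\eta}\Phi(a)\Phi^{0,*}(\beta_+(\eta))$ is a Jost/dual-Jost product, which is precisely the case where nontrivial right-hand sides appear. The only concrete input you supply here is $u(e^{-\pd}\Phi(a))\Phi^{0,*}(\beta_+(\eta))=2i\eta\,\tg^{-,*}(\eta)$, and the assertion that the correction $-2i\eta(1+e^{-\pd})\tg^{-,*}$ ``arises exactly as did'' $-2i\eta(1-e^{-2\pd})\tg^-$ is not a derivation; note also that the naive analogue of your argument for \eqref{eq:clpd-t,AB-3} produces an identity for $\{A'(-i\eta)-B'(i\eta)\}g^{-,*}$ acting on $g^{-,*}$ itself, which is a different statement from \eqref{eq:clpd-t,AB-4}, where the \emph{adjoint} operator acts on $e^{-\pd}\tg^{-,*}$. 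In the paper this case requires two further computations with no counterpart in your sketch: the operator identity $\{A'(-i\eta)-B'(i\eta)-C(\eta)\}^*(e^\pd-1)^{-1}=i\eta+(1-e^{-\pd})u$ used together with $(i\eta-\pd_t)Q=-\pd_sQ$ and Lemma~\ref{lem:Darboux}, and the separate evaluation $C(\eta)^*e^{-\pd}\tg^{-,*}(\eta)=4i\eta\, e^{-\pd}\tg^{-,*}(\eta)$; only the combination of these produces the coefficient $-2i\eta(1+e^{-\pd})$. Without a substitute for these steps your argument for \eqref{eq:clpd-t,AB-4} does not close.
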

\begin{lemma}
  \label{lem:orth-relation}
Let $\a\in(0,2\k)$ and $\eta\in(-\eta_*(\a),\eta_*(\a))$. Then
  \begin{gather*}
    \la \tg^\pm(\eta),\pd_tg^{\pm,*}(\eta)\ra
    =\la\pd_t\tg^\pm(\eta),g^{\pm,*}(\eta)\ra\,,
\\
\la g^\pm(\eta),\pd_tg^{\mp,*}(\eta)\ra-\la\pd_tg^\pm(\eta),g^{\mp,*}(\eta)\ra
=-2\mu(\mp\eta)\,.
\end{gather*}
 \end{lemma}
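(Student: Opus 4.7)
The plan is to view both identities as Wronskian-type conservation laws for the $y$-Fourier transformed linearized equation \eqref{eq:linear1-F}. Since $(e^\pd - 1)^* = -(1-e^{-\pd})$ and $V^\k$ is real, the operator $L_\k := (1-e^{-\pd})(1+V^\k)(e^\pd-1)$ is self-adjoint on $\ell^2$; hence for any two solutions $u,v$ of $(\pd_t^2 + \eta^2)u = L_\k u$, the pairing $W(u,v) := \la u, \pd_t v \ra - \la \pd_t u, v\ra$ is preserved in $t$. By Lemma~\ref{lem:tgform}, both $g^\pm(\eta)$ and $g^{\pm,*}(\eta)$ are such solutions.

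\par
For the second identity, I would apply this directly to $u = g^\pm(\eta)$ and $v = g^{\mp,*}(\eta)$, so that $W(g^\pm, g^{\mp,*})$ is $t$-independent and it suffices to compute its value once. I would evaluate in the limit $t \to -\infty$ (or equivalently for $n$ far from the soliton's support in $\Z$), where $V^\k_n \to 0$ exponentially and the Jost and dual Jost functions \eqref{eq:Jost}--\eqref{eq:dualJost} collapse into their trivial counterparts $\Phi^0(\beta)$ and $\Phi^{0,*}(\beta)$. The Wronskian then reduces to a single oscillatory sum that can be read off from the normalizations in \eqref{def:gpm}--\eqref{def:tgpm*}, and the scattering-type relation $\beta_+(\mp\eta) - \beta_-(\mp\eta) = 2\mu(\mp\eta)$ yields the asserted constant $-2\mu(\mp\eta)$.

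\par
For the first identity, $\tg^\pm$ and $g^{\pm,*}$ do not solve the same equation, so direct Wronskian conservation does not apply. I would use $g^{\pm,*} = (1-e^{-\pd})\tg^{\pm,*}$ and summation by parts to rewrite
\[
  \la\tg^\pm,\pd_t g^{\pm,*}\ra - \la\pd_t\tg^\pm, g^{\pm,*}\ra
  = \la(1-e^\pd)\tg^\pm, \pd_t\tg^{\pm,*}\ra - \la(1-e^\pd)\pd_t\tg^\pm, \tg^{\pm,*}\ra.
\]
The intertwining relation $L_\k(1-e^{-\pd}) = (1-e^{-\pd})M_\k$ with $M_\k = (1+V^\k)(e^\pd-2+e^{-\pd})$ (which follows from the factorization $(e^\pd-1)(1-e^{-\pd}) = e^\pd-2+e^{-\pd}$) implies that both $\tg^\pm$ and $\tg^{\pm,*}$ satisfy $(\pd_t^2+\eta^2)u = M_\k u$. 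Combined with the explicit factorizations \eqref{eq:tg1form}--\eqref{eq:tg2*form} and the involution $\beta_+(\cdot)\beta_-(\cdot) = 1$, this brings the shifted Wronskian on the right-hand side into a form whose $t$-derivative is controlled, and the asymptotic evaluation (again taking $t \to -\infty$) shows that the pairing vanishes.

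\par
The main obstacle is the explicit evaluation of the conserved Wronskians. The sums $\sum_n \sech\k z_n \cdot \beta^n \cdot (\mathrm{rational})$ do not telescope naively, so one has to exploit either the $t \to -\infty$ limit (where the modes decouple into plane waves) or the residue identity \eqref{eq:dualJost-pole} together with $\beta_+\beta_- = 1$ to reduce the sums to algebraically closed form. Handling the $1/(2i\eta)$ poles built into $\tg^-, \tg^{-,*}$ and $g^{-,*}$ when $\eta$ is close to $0$ is delicate and requires taking these limits carefully to extract the finite factor $-2\mu(\mp\eta)$.
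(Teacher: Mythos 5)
Your starting point is sound: the pairing $W(u,v)=\la u,\pd_t v\ra-\la \pd_t u,v\ra$ is indeed conserved for solutions of \eqref{eq:linear1-F} paired in $\ell^2_\a\times\ell^2_{-\a}$ (this is exactly the paper's Lemma~\ref{lem:qJ-1q*}), so the problem does reduce to evaluating a $t$-independent constant. The gap is in the evaluation, which is the entire substance of the lemma. Your proposed mechanism --- let $t\to-\infty$ so that the Jost functions ``collapse'' to $\Phi^0,\Phi^{0,*}$ and the Wronskian becomes a readable plane-wave sum --- does not work: by \eqref{eq:tg1form}--\eqref{eq:tg2*form} the functions $g^\pm(\eta)$, $g^{\pm,*}(\eta)$ are localized in $z_n(t)=n-t\sinh\k/\k$ and travel with the soliton, and in the products $g^\pm\overline{g^{\mp,*}}$ the exponential prefactors $e^{\mp t\delta}$ cancel, so the summand is a fixed profile of $z_n$ for all $t$. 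The sum over $n\in\Z$ never decouples into free waves as $t\to-\infty$; it is permanently concentrated near the (moving) soliton center. Your parenthetical ``or equivalently for $n$ far from the soliton's support'' points at the right place where the value $-2\mu$ lives, but you cannot evaluate a sum over all $n$ by looking only at large $n$ unless you first exhibit the summand as a telescoping difference, and nothing in your proposal produces that structure.

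What the paper actually does is supply precisely that missing reduction. The Darboux identities of Claim~\ref{cl:pd-t,AB} rewrite $\pd_t g^\pm$ and $\pd_t g^{\pm,*}$ in terms of $A'(-i\eta)-B'(i\eta)$ and its adjoint; for the diagonal pairings this yields a pure adjointness cancellation, $\la g^+,T^*h\ra-\la T g^+,h\ra=0$, with no asymptotic evaluation at all. For the cross pairings the same identities reduce the Wronskian to $\la e^\pd C(\eta)e^{-\pd}\{\Phi^0(a)\Phi^{0,*}(\beta_\mp(\eta))\},\tg^{\pm,*}(\eta)\ra$; then the facts that $C(\eta)^*e^{-\pd}\tg^{+,*}(\eta)=0$ (Lemma~\ref{lem:kerC-C'}) and that $(e^\pd Ce^{-\pd}f)\bar g-f\overline{e^\pd C^*e^{-\pd}g}$ is an exact discrete derivative \eqref{eq:9} make the sum telescope in $n$, leaving a single boundary term at $n\to+\infty$ where $u_n,v_n,\tau_n'$ have explicit limits; that boundary term is $\beta_-(\eta)-\beta_-(\eta)^{-1}=-2\mu(\eta)$. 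Your appeal to $\beta_+-\beta_-=2\mu$ and to the residue identity \eqref{eq:dualJost-pole} correctly anticipates the flavor of the answer, but without Claim~\ref{cl:pd-t,AB} and the telescoping identity \eqref{eq:9} there is no route from the conserved sum to that number. The same criticism applies to the first identity: after your (legitimate) summation by parts and intertwining, the assertion that ``the asymptotic evaluation shows that the pairing vanishes'' is unsupported, whereas the vanishing is in fact an algebraic consequence of \eqref{eq:clpd-t,AB-1}--\eqref{eq:clpd-t,AB-2}. As written, the proposal establishes only that the quantities are constants, not what those constants are.
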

 We will prove Claim~\ref{cl:pd-t,AB} and Lemma~ \ref{lem:orth-relation}
 in Appendix~\ref{sec:appendix2}.

\begin{proof}[Proof of Lemma~\ref{lem:sol-Q}]
  By Lemmas~\ref{lem:kerC-C'} and \ref{lem:C-low},
  Eq.~\eqref{eq:Darboux2-F} is solvable in
  $(\mathbf{Q_1}(\eta), \mathbf{Q_2}(\eta))$ if and only if
  \begin{align*}
&\left \la \left\{A'(-i\eta)-B'(i\eta)\right\}\mathbf{Q_1'}(\eta)
    +(1-e^{-\pd})\mathbf{Q_2'}(\eta), e^{-\pd}\tg^{+,*}(\eta)\right\ra
\\=&
   \left \la
   \begin{pmatrix}
\mathbf{Q_1'}(\eta) \\ \mathbf{Q_2'}(\eta)
   \end{pmatrix},
    \begin{pmatrix}
\left\{A'(-i\eta)-B'(i\eta)\right\}^*e^{-\pd}\tg^{+,*}(\eta)
\\ -(e^\pd-1) e^{-\pd}\tg^{+,*}(\eta)
\end{pmatrix}
\right\ra
 = 0\,.
  \end{align*}
By Claim~\ref{cl:pd-t,AB},
  \begin{align*}
& \left \la \left\{A'(-i\eta)-B'(i\eta)\right\}\mathbf{Q_1'}(\eta)
    +(1-e^{-\pd})\mathbf{Q_2'}(\eta), e^{-\pd}\tg^{+,*}(\eta)\right\ra
\\=&\left \la
\begin{pmatrix}
\mathbf{Q_1'}(\eta) \\ \mathbf{Q_2'}(\eta)      
\end{pmatrix},
    \begin{pmatrix}
\pd_tg^{+,*}(\eta) \\ -g^{+,*}(\eta)      
    \end{pmatrix}\right\ra\,.
  \end{align*}
Hence it follows from Lemma~\ref{lem:C-low} that
for $(\mathbf{Q_1'}(\eta), \mathbf{Q_2'}(\eta))$ satisfying
\eqref{eq:orth-Q1'Q2'}, there exists a unique 
$(\mathbf{Q_1}(\eta), \mathbf{Q_2}(\eta))$ satisfying
\eqref{eq:Darboux2-F} and \eqref{eq:lem-solQ}.
\end{proof}

\begin{lemma}
  \label{lem:C'-low}
Assume that $\a\in(0,2\k)$ and that $|\eta|\le \eta_0<\eta_*(\a)$.
For every $(\mathbf{Q_1}(\eta),\mathbf{Q_2}(\eta))\in \ell^2_\a\times\ell^2_\a$,
there exists a unique $(\mathbf{Q_1'}(\eta),\mathbf{Q_2'}(\eta))$
satisfying \eqref{eq:Darboux2-F} and
\begin{equation}
  \label{eq:orth-Q1'Q2'-new}
  \left\la
\begin{pmatrix}
  \mathbf{Q_1'}(\eta) \\ \mathbf{Q_2'}(\eta)
\end{pmatrix},
\begin{pmatrix}
 \pd_tg^{-,*}(\eta) \\ -g^{-,*}(\eta)  
\end{pmatrix}
\right\ra=0\,,  
\end{equation}
\begin{equation*}
\|\mathbf{Q_1'}(\eta)\|_{\ell^2_\a}+\|\mathbf{Q_2'}(\eta)\|_{\ell^2_\a}
\le K\left(\|\mathbf{Q_1}(\eta)\|_{\ell^2_\a}
  +\|\mathbf{Q_2}(\eta)\|_{\ell^2_\a}\right)\,,
\end{equation*}
where $K$ is a positive constant $K$ that depends only on $\a$ and $\eta_0$.
\end{lemma}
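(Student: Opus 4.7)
The plan is to solve \eqref{eq:Darboux2-F} for $(\mathbf{Q_1'}(\eta), \mathbf{Q_2'}(\eta))$ by mirroring the strategy of Lemma~\ref{lem:sol-Q} with the roles of the primed and unprimed sides interchanged. By Lemma~\ref{lem:kerC-C'}, the operator $C'(\eta)$ has trivial cokernel but one-dimensional kernel $\spann\{g^+(\eta)\}$, so no orthogonality condition on the right-hand side is required to produce a particular inverse, but the solution is determined only up to a multiple of $g^+(\eta)$; the condition \eqref{eq:orth-Q1'Q2'-new} will then be used to fix this free parameter.

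First I would construct a bounded right-inverse of $C'(\eta)$ on $\ell^2_\alpha$ by the method of Lemma~\ref{lem:C-low}. Using the factorization $C'(\eta) = -(1-e^{-\pd})(\tau')^{-1} D(-\eta) \tau' (1-e^\pd)^{-1}$ and the Green kernel $k(-\eta)$ of $D(-\eta)$ from Lemma~\ref{lem:fundsol-seq}, I would set
\[
\mathbf{P_1}(\eta) := -(1-e^\pd)(\tau')^{-1}\bigl(k(-\eta) * \bigl(\tau' (1-e^{-\pd})^{-1} r_2(\eta)\bigr)\bigr),
\]
with $r_2(\eta) := \{e^\pd B(i\eta) - A(-i\eta)\}\mathbf{Q_1}(\eta) + (e^\pd - 1)\mathbf{Q_2}(\eta)$, so that $C'(\eta)\mathbf{P_1}(\eta) = r_2(\eta)$. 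A region-by-region splitting of the convolution sum, analogous to the one in Lemma~\ref{lem:C-low}, combined with $\sup_\eta |k_{n-m}(-\eta)| \lesssim e^{-\kappa|n-m|}$, the bounds $|\tau'_n/\tau'_m| \le 2 e^{\pm(n-m)\kappa}$ from \eqref{eq:taun/taum-1}--\eqref{eq:taun/taum-2}, and Young's inequality, should yield $\|\mathbf{P_1}(\eta)\|_{\ell^2_\alpha} \lesssim \|r_2(\eta)\|_{\ell^2_\alpha} \lesssim \|\mathbf{Q_1}(\eta)\|_{\ell^2_\alpha} + \|\mathbf{Q_2}(\eta)\|_{\ell^2_\alpha}$ uniformly in $|\eta| \le \eta_0$. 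The absence of a cokernel removes the need for an orthogonality condition on $r_2$; the gap $|\beta_-(\eta)^{-1}| \le \epsilon e^{-(\kappa+\alpha)}$ from Claim~\ref{cl:beta1-beta2} still plays the role of containing the growth of $\tau'$ at $\pm\infty$.

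Since $1-e^{-\pd}$ is an isomorphism on $\ell^2_\alpha$ (Lemma~\ref{lem:pd-1}), the first row of \eqref{eq:Darboux2-F} then uniquely determines
\[
\mathbf{P_2}(\eta) := (1-e^{-\pd})^{-1}\bigl\{C(\eta)\mathbf{Q_1}(\eta) - (A'(-i\eta) - B'(i\eta))\mathbf{P_1}(\eta)\bigr\} \in \ell^2_\alpha
\]
with a controlled norm. By Lemma~\ref{lem:kerC-C'} and Claim~\ref{cl:pd-t,AB}\eqref{eq:clpd-t,AB-1}, the homogeneous Darboux system has kernel $\spann\{(g^+(\eta), \partial_t g^+(\eta))\}$, so every solution of \eqref{eq:Darboux2-F} takes the form $(\mathbf{Q_1'}(\eta), \mathbf{Q_2'}(\eta)) = (\mathbf{P_1}(\eta) + c g^+(\eta), \mathbf{P_2}(\eta) + c \partial_t g^+(\eta))$ for some $c \in \mathbb{C}$. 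Substituting into \eqref{eq:orth-Q1'Q2'-new} produces a single linear equation for $c$ whose coefficient, by Lemma~\ref{lem:orth-relation}, equals $\langle g^+(\eta), \partial_t g^{-,*}(\eta)\rangle - \langle \partial_t g^+(\eta), g^{-,*}(\eta)\rangle = -2\mu(-\eta)$. Since $\mu(\eta)^2 = \sinh^2\kappa - \eta^2 + 2i\eta\cosh\kappa$ never vanishes for $\kappa > 0$, this coefficient is bounded away from $0$ uniformly in $|\eta| \le \eta_0$, so $c$ is determined uniquely with $|c|$ controlled by $\|\mathbf{P_1}(\eta)\|_{\ell^2_\alpha} + \|\mathbf{P_2}(\eta)\|_{\ell^2_\alpha}$, completing the proof.

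The main obstacle is the first step: carrying out the delicate split-sum analysis of Lemma~\ref{lem:C-low} for the dual operator $C'(\eta)$. Although the fact that $\ker C'(\eta)^* = \{0\}$ spares us any orthogonality condition on $r_2$, one still has to handle the different placement of the $(\tau')^{\pm 1}$ factors on either side of $D(-\eta)$ and verify that the convolution with $k(-\eta)$ lands in $\ell^2_\alpha$ with bounds uniform in $\eta$; the remaining algebraic steps then rely only on results already established.
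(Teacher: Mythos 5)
Your overall skeleton --- a particular solution of \eqref{eq:Darboux2-F} plus a multiple of the kernel element $(g^+(\eta),\pd_t g^+(\eta))$, with the free constant fixed and uniqueness obtained from the non-degenerate pairing $\la g^+(\eta),\pd_tg^{-,*}(\eta)\ra-\la \pd_tg^+(\eta),g^{-,*}(\eta)\ra=-2\mu(-\eta)$ --- matches the paper, and those last steps are sound. The gap is in your first step. The assertion that the naive Green-kernel formula $\mathbf{P_1}=-(1-e^\pd)(\tau')^{-1}\bigl(k(-\eta)*(\tau'(1-e^{-\pd})^{-1}r_2)\bigr)$ is bounded on $\ell^2_\a$ ``because there is no cokernel'' fails for $|\eta|\le\eta_0<\eta_*(\a)$. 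The $(n,m)$ entry of the kernel of $(\tau')^{-1}D(-\eta)^{-1}\tau'$ is $k_{n-m}(-\eta)\,\tau'_m/\tau'_n$ with $|k_j(-\eta)|=e^{-\gamma_R(\eta)|j|}/(2|\mu(\eta)|)$. In the region $m\le n\le 1+t\sinh\k/\k$ one has $|\tau'_m/\tau'_n|$ of order $e^{(n-m)\k}$ (cf.\ \eqref{eq:taun/taum-2}), so after inserting the weights the kernel behaves like $e^{(\a+\k-\gamma_R(\eta))(n-m)}$, and $\a+\k-\gamma_R(\eta)>0$ precisely because $|\beta_-(\eta)|<e^{\k+\a}$ for $|\eta|<\eta_*(\a)$ by Claim~\ref{cl:beta1-beta2}. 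This is not summable over $n-m\ge0$; for data in $\ell^2_\a$ concentrated near $m=-\infty$ the half-sum $\sum_{m\le n}$ actually diverges. This is the exact mirror of the bad half-sum in Lemma~\ref{lem:C-low}: there it was rescued by the orthogonality condition \eqref{eq:orth-1}, which converts $\sum_{m<n}$ into $-\sum_{m\ge n}$. For $C'(\eta)$ no such condition on $r_2$ is available (the cokernel is trivial) and the corresponding full sum does not even converge, so the rescue has to come from the one-dimensional kernel --- i.e.\ from a careful choice of the particular solution --- which your formula does not make.

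The paper avoids this by never inverting $C'(\eta)$ through its Green kernel. It runs the Lemma~\ref{lem:C-low} machinery on the adjoint $C'(\eta)^*$ acting on $\ell^2_{-\a}$, where the orthogonality to $g^+(\eta)$ supplies exactly the cancellation needed in the bad region; this gives $\operatorname{Range}C'(\eta)^*=\spann\{g^+(\eta)\}^\perp$ closed with a uniformly bounded inverse there. Fredholm theory then yields surjectivity of $C'(\eta)$, and the uniform bound on $C'(\eta)^{-1}:\ell^2_\a\to\mathcal{D}=\{u\in\ell^2_\a\mid u_0=0\}$ is transferred by duality, via the splitting $\varphi=c\delta+f^*$ and the estimate $|\la u,\varphi\ra|=|\la u,f^*\ra|\le K_1\|C'(\eta)u\|_{\ell^2_\a}\|f^*\|_{\ell^2_{-\a}}$. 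To repair your argument you would need to adopt this adjoint--duality route (or construct a right inverse genuinely adapted to the weight); the direct convolution you propose does not close.
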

\begin{proof}[Proof of Lemma~\ref{lem:C'-low}]
First, we will prove the uniqueness.
Suppose that
\begin{equation}
    \label{eq:2}
  \begin{pmatrix}
    A'(-i\eta)-B'(i\eta) & 1-e^{-\pd}  \\ C'(\eta) & 0
  \end{pmatrix}
  \begin{pmatrix}
    \mathbf{Q_1'} \\ \mathbf{Q_2'}
  \end{pmatrix}
  =  \begin{pmatrix} 0 \\ 0  \end{pmatrix}\,.
\end{equation}
Then by Lemm~\ref{lem:kerC-C'},
\begin{equation}
  \label{eq:3}
\mathbf{Q_1'}=cg^+(\eta)
=ce^{-\pd}\left(\Phi(\beta_+(-\eta))/e^\pd\tau'\right)  
\end{equation}
for a $c\in\C$.
By Claim~\ref{cl:pd-t,AB}, \eqref{eq:2} and \eqref{eq:3},
\begin{align*}
  \mathbf{Q_2'}=&
-(1-e^{-\pd})^{-1} \left\{A'(-i\eta)-B'(i\eta)\right\} \mathbf{Q_1'}
= c\pd_tg^+(\eta)\,.
\end{align*}
That is,
\begin{equation}
  \label{eq:12}
  \ker\begin{pmatrix}
    A'(-i\eta)-B'(i\eta) & 1-e^{-\pd}  \\ C'(\eta) & 0
  \end{pmatrix}
  =\spann\left\{
    \begin{pmatrix}
      g^+(\eta)\\ \pd_tg^+(\eta)
    \end{pmatrix}\right\}\,.
\end{equation}
If $(\mathbf{Q'_1},\mathbf{Q'_2})$ satisfies \eqref{eq:orth-Q1'Q2'-new}
and \eqref{eq:2},
then $\mathbf{Q'_1}=\mathbf{Q'_2}=0$ by Lemma~\ref{lem:orth-relation}.
Thus we prove the uniqueness of $(\mathbf{Q_1'},\mathbf{Q_2'})$ satisfying \eqref{eq:Darboux2-F} and \eqref{eq:orth-Q1'Q2'-new}.
\par

 Next, we will show that $C'(\eta):\ell^2_\a\to\ell^2_\a$ is surjective.
Following the line of the proof of Lemma~\ref{lem:C-low}, we have
$\operatorname{Range}C'(\eta)^*=\{v\in\ell^2_{-\a}\mid \la g^+(\eta),v\ra=0\}$
and $\operatorname{Range}C'(\eta)^*$ is closed.
Thus by Lemma~\ref{lem:kerC-C'}, $C'(\eta)^*$ as well as $C'(\eta)=C'(\eta)^{**}$
is Fredholm because $C'(\eta)$ is bounded and $\ell^2_\a$ is reflexive.
Since $\ker(C'(\eta)^*)=\{0\}$ and $\ker(C'(\eta))=\spann\{g^+(\eta)\}$ by Lemma~\ref{lem:kerC-C'},
we have $\operatorname{Range}(C'(\eta))=\ell^2_\a$
and $\mathcal{R}':=\operatorname{Range}(C'(\eta)^*)=\spann\{g^+(\eta)\}^\perp$.
Moreover, $C'(\eta)^*:\ell^2_{-\a}\to \mathcal{R}'$ has a bounded inverse,
and we can prove that
$\sup_{\eta\in[-\eta_0,\eta_0]}\|(C'(\eta)^*)^{-1}\|_{B(\mathcal{R}',\ell^2_{-\a})}<\infty$ in the same way
as Lemma~\ref{lem:C-low}.
\par
Next, we will construct an inverse mapping of $C'(\eta)$.  Let
$\delta=\{\delta_{0n}\}_{n\in\Z}$, where $\delta_{0n}$ is the
Kronecker delta, and
$\mathcal{D}=\{u=\{u_n\}_{n\in\Z}\in\ell^2_\a \mid u_0=0\}$.  Since
$\la g^+(\eta),\delta\ra=g^+_0(\eta)\ne0$, we have
$\ell^2_{-\a}=\spann\{\delta\}\oplus
\operatorname{Range}(C'(\eta)^*)$ and
$\ker C'(\eta)\cap \mathcal{D}=\{0\}$.
\par
Suppose that $u\in\mathcal{D}$ and that $\varphi\in \ell^2_{-\a}$.
Then $\varphi=c\delta+f^*$ with $f^*\in\operatorname{Range}C'(\eta)^*$ and
$c=\la g^+(\eta),\varphi\ra/g^+_0(\eta)$, and
\begin{align*}
  \left|\la u,\varphi\ra\right|=&  \left|\la u,f^*\ra\right|
 \le  K_1 \|C'(\eta)u\|_{\ell^2_\a}\|f^*\|_{\ell^2_{-\a}}\,,
\end{align*}
where $K_1=\sup_{\eta\in[-\eta_0,\eta_0]}
\left\|\left(C'(\eta)^*\right)^{-1}\right\|_{B(\mathcal{R}',\ell^2_{-\a})}$.
Combining the above, we have for every $u\in\mathcal{D}(\eta)$ and $\varphi\in\ell^2_{-a}$,
\begin{align*}
\left|\la u,\varphi\ra\right|
  \le & K_1\|C'(\eta)u\|_{\ell^2_\a}
        \left(\|\varphi\|_{\ell^2_{-\a}}+|c|\right)
 \\ \le & K_1\left(1+\sup_{\eta\in[-\eta_0,\eta_0]}
\frac{\|g^+(\eta)\|_{\ell^2_{-\a}}}{|g^+_0(\eta)|}\right)
\|\varphi\|_{\ell^2_{-\a}}\,.  
\end{align*}
Thus we prove that a map $C'(\eta)^{-1}: \ell^2_\a\to \mathcal{D}$
is uniformly bounded for $\eta\in[-\eta_0,\eta_0]$.
\par
Let 
\begin{gather*}
\mathbf{Q'_{1,0}}(\eta)=C'(\eta)^{-1}\left\{
\left(e^\pd B(i\eta)-A(-i\eta)\right)\mathbf{Q_1}(\eta)+(e^\pd-1)\mathbf{Q_2}(\eta)\right\}\,,
\\
\mathbf{Q'_{2,0}}(\eta)=(1-e^{-\pd})^{-1}\left\{C(\eta)\mathbf{Q_1}(\eta)
+\left(B'(i\eta)-A'(-i\eta)\right)\mathbf{Q_1'}(\eta)\right\}\,,
\\
  \begin{pmatrix}
\mathbf{Q'_1}(\eta) \\\mathbf{Q'_2}(\eta)
\end{pmatrix}
=
\begin{pmatrix}
\mathbf{Q'_{1,0}}(\eta)\\\mathbf{Q'_{2,0}}(\eta)  
\end{pmatrix}
+c
\begin{pmatrix}
  g^+(\eta)\\\pd_tg^-(\eta)
\end{pmatrix}\,.
\end{gather*}
By Lemma~\ref{lem:orth-relation}, 
we can choose $c\in\C$ such that $(\mathbf{Q'_1},\mathbf{Q'_2})$
satisfies \eqref{eq:orth-Q1'Q2'-new}.
By the definitions and \eqref{eq:12},
$(\mathbf{Q'_{1,0}},\mathbf{Q'_{2,0}})$ and $(\mathbf{Q_1'},\mathbf{Q_2'})^T$ satisfy \eqref{eq:Darboux2-F}.
Moreover,
\begin{equation*}
\|\mathbf{Q'_1}(\eta)\|_{\ell^2_\a}+\|\mathbf{Q'_2}(\eta)\|_{\ell^2_\a}\le
K\left(  \|\mathbf{Q_1}(\eta)\|_{\ell^2_\a}
  +\|\mathbf{Q_2}(\eta)\|_{\ell^2_\a}\right)\,,
\end{equation*}
where $K$ is a constant that depends only on $\eta_0$ and $\a$.
This completes the proof of Lemma~\ref{lem:C'-low}.
\end{proof}

\bigskip

\section{Proof of Theorems~\ref{thm:main} and \ref{thm:profile}}
\label{sec:proof}
In this section, we will prove Theorem~\ref{thm:main} by using
the boundedness of Darboux transformations and investigate the asymptotic profile
of solutions to \eqref{eq:linear-1} as $t\to\infty$ (Theorem~\ref{thm:profile}).

To begin with, we will introduce a projection to a subspace spanned by
$g^\pm(\eta)$. Let 
\begin{gather*}
\mathbf{g^\pm}(t,\eta)=
\begin{pmatrix}  g^\pm(\eta) \\ \pd_tg^\pm(\eta)\end{pmatrix}\,,
\quad
\mathbf{g^{\pm,*}}(t,\eta)=
\begin{pmatrix}  \pd_tg^{\pm,*}(\eta) \\ -g^{\pm,*}(\eta)\end{pmatrix}\,,
\\
g^1(t,\eta)=e^{-iy\eta}\left\{g^+(\eta)-i\eta\csch\k\, g^-(\eta)\right\}\,,
\\
g^2(t,\eta)=e^{-iy\eta}\left\{\frac{1}{i\eta}g^+(\eta)+\csch\k\, g^-(\eta)\right\}\,,
\\
g^{1,*}(t,\eta)=e^{-iy\eta}\left\{\frac{1}{i\eta}g^{+,*}(\eta)
  +\csch\,\k g^{-,*}(\eta)\right\}\,,
\\
g^{2,*}(t,\eta)=e^{-iy\eta}\left\{g^{+,*}(\eta)
  -i\eta\csch\k\, g^{-,*}(\eta)\right\}\,,
\end{gather*}
and for $j=1$, $2$, let
\begin{gather*}
\tg^j(t,\eta)=(1-e^{-\pd})^{-1}g^j(t,\eta)\,,\quad
\tg^{j,*}(t,\eta)=(1-e^{-\pd})^{-1}g^{j,*}(t,\eta)\,,
\\
\mathbf{g^j}(t,\eta)=
\begin{pmatrix}  g^j(t,\eta) \\ \pd_tg^j(t,\eta)\end{pmatrix}\,,
\quad
\mathbf{g^{j,*}}(t,\eta)=\begin{pmatrix}  \pd_tg^{j,*}(t,\eta) \\ -g^{j,*}(t,\eta)\end{pmatrix}\,.
\end{gather*}
By \eqref{eq:tg1form}--\eqref{eq:tg2*form} and Lemma~\ref{cl:delta-eta},
\begin{gather}
  \label{eq:tg1}
  \tg^1_n(t,\eta)=e^{-t\delta_R(\eta)-\gamma_R(\eta)z_n(t)}\sech \k z_n(t)
  \cos\{t\delta_I(\eta)+\gamma_I(\eta)z_n(t)\}\,,
  \\ \label{eq:tg2}
  \tg^2_n(t,\eta)=e^{-t\delta_R(\eta)-\gamma_R(\eta)z_n(t)}\sech \k z_n(t)
  \frac{\sin\{t\delta_I(\eta)+\gamma_I(\eta)z_n(t)\}}{\eta}\,,
\\ \label{eq:tg1*}
  \tg^{1,*}_n(t,\eta)=-e^{t\delta_R(\eta)+\gamma_R(\eta)z_n(t)}\sech \k z_n(t)
  \frac{\sin\{t\delta_I(\eta)+\gamma_I(\eta)z_n(t)\}}{\eta}\,,
\\ \label{eq:tg2*}
  \tg^{2,*}_n(t,\eta)=e^{t\delta_R(\eta)+\gamma_R(\eta)z_n(t)}\sech \k z_n(t)
  \cos\{t\delta_I(\eta)+\gamma_I(\eta)z_n(t)\}\,.
\end{gather}
Then $\mathbf{g^j}$ and $\mathbf{g^{j,*}}$ are real valued and
and for $j=1$ and $2$,
\begin{gather*}
\mathbf{g^j}(t,-\eta)=\mathbf{g^j}(t,\eta)\,,
  \quad
\mathbf{g^{j,*}}(t,-\eta)=\mathbf{g^{j,*}}(t,\eta)\,.
\end{gather*}
By Lemma~\ref{lem:orth-relation},
$$\mathbf{g^\pm}(t,\eta),\mathbf{g^{\mp,*}}(t,\eta)\ra=-2\mu(\mp\eta)\,,
\quad
\la \mathbf{g^\pm}(t,\eta),\mathbf{g^{\pm,*}}(t,\eta)\ra=0\,,$$
\begin{align*}
&
\la \mathbf{g^1}(t,\eta),\mathbf{g^{1,*}}(t,\eta)\ra
=\la \mathbf{g^2}(t,\eta),\mathbf{g^{2,*}}(t,\eta)\ra
=-4\csch\k\Re\mu(\eta)=-4+O(\eta^2)\,,
\\ &
\la \mathbf{g^1}(t,\eta),\mathbf{g^{2,*}}(t,\eta)\ra
=4\csch\k\eta\Im\mu(\eta)=O(\eta^2)\,,
\\ &
\la \mathbf{g^2}(t,\eta),\mathbf{g^{1,*}}(t,\eta)\ra
=\la \mathbf{g^2}(t,\eta),\mathbf{g^{2,*}}(t,\eta)\ra
=4\frac{\Im\mu(\eta)}{\eta\sinh\k}  
=4\frac{\cosh\k}{\sinh^2\k}+O(\eta^2)\,.
\end{align*}
Let
\begin{gather*}
\mathcal{A}(t,\eta)=
  \begin{pmatrix}
    \la \mathbf{g^1}(t,\eta),\mathbf{g^{1,*}}(t,\eta)\ra
  &  \la \mathbf{g^2}(t,\eta),\mathbf{g^{1,*}}(t,\eta)\ra
  \\ 
  \la \mathbf{g^1}(t,\eta),\mathbf{g^{2,*}}(t,\eta)\ra
  &  \la \mathbf{g^2}(t,\eta),\mathbf{g^{2,*}}(t,\eta)\ra
\end{pmatrix}\,,
\\
\mathcal{P}(t,\eta)\mathbf{f}=(\mathbf{g^1}(t,\eta),\mathbf{g^2}(t,\eta))
\mathcal{A}(t,\eta)^{-1}
  \begin{pmatrix}
   \left \la \mathcal{F}_y\mathbf{f}(\eta),\mathbf{g^{1,*}}(t,\eta)\right\ra
 \\ \left \la \mathcal{F}_y\mathbf{f}(\eta),\mathbf{g^{2,*}}(t,\eta)\right\ra
\end{pmatrix}\,,
\\
  P_1(t,\eta_0)\mathbf{f}=
  \int_{-\eta_0}^{\eta_0}
  \mathcal{P}(t,\eta)\mathbf{f}e^{iy\eta}\,d\eta\,.
\end{gather*}
By Claim~\ref{cl:H1-bound}, we have
for $\eta_0\in[0,\eta_*(\a))$ and $t\in\R$,
\begin{gather*}
\sum_{j=1,2}\left(\|g^j(t,\eta)\|_{\ell^2_\a L^\infty(-\eta_0,\eta_0)}
  + \|g^{j,*}(t,\eta)\|_{\ell^2_{-\a}L^\infty(-\eta_0,\eta_0)}\right)  <\infty\,,
\end{gather*}
and $P_1(t,\eta_0)$ is a bounded operator on $\ell^2_\a H^1(\R)\times \ell^2_\a L^2(\R)$.
Moreover, if $0< \eta_0\le \eta_1<\eta_*(\a)$, there exists a
$K$ depending only on $\eta_0$, $\eta_1$ and $\a$ such that
\begin{equation*}
  \sup_{t\in\R} \left\|
\{P_1(t,\eta_1)-P_1(t,\eta_0)\}\mathbf{f}
  \right\|_{\ell^2_\a H^1(\R)\times \ell^2_\a L^2(\R)}
\le K\|\mathbf{f}\|_{\ell^2_\a L^2(\R)\times \ell^2_\a L^2(\R)}\,.
\end{equation*}

\par
We remark that $P_1(t,\eta)$ is commutative with a flow generated by \eqref{eq:6}.
Let $U(t,s)$ be an operator defined by
\begin{equation*}
  U(t,s) \begin{pmatrix}   f_1 \\ f_2 \end{pmatrix}
  =
  \begin{pmatrix}
    \bQd(t) \\ \pd_t\bQd(t)
  \end{pmatrix}\,,
\end{equation*}
where $\bQd(t)$ is a solution of \eqref{eq:6} satisfying $\bQd(s)=f_1$ and $\pd_t\bQd(s)=f_2$.
Then $U(t,s)P_1(s,\eta_0)=P_1(t,\eta_0)U(t,s)$.
\par
Now, we are in a position to prove Theorem~\ref{thm:main}.
\begin{proof}[Proof of Theorem~\ref{thm:main}]
First, we will show that if $|\eta|$ is sufficiently large,
a solution of \eqref{eq:linear1-F} decays at the same rate
as the corresponding solution of \eqref{eq:linear0-F}.
\par
Let $\bQ_\eta$ be a solution of \eqref{eq:linear0-F} in the class
$C^2(\R;\ell^2_\a)$. 
By Corollary~\ref{cor:exp-0}, there exists a positive constant $K_1$
depending only on $\a$ such that for every $\a>0$ and 
$t$, $s\in\R$ with $t\ge s$,
\begin{equation}
  \label{eq:decay-Qeta}
  \begin{split}
&  e^{-\a ct}\left( \|\la \eta\ra\bQ_\eta(t)\|_{\ell^2_\a}
    +\|\pd_t\bQ_\eta(t)\|_{\ell^2_\a}\right)
\\  \le & K_1e^{-b_1(t-s)} e^{-\a cs}\left(
\|\la\eta\ra\bQ_\eta(s)\|_{\ell^2_\a}+\|\pd_t\bQ_\eta(s)\|_{\ell^2_\a}\right)\,,
  \end{split}
\end{equation}
where $c=\sinh\k/\k$ and $b_1=c\a-2\sinh(\a/2)$.
\par
Let $\eta_1$, $\eta_2$, $\a_1$ and $\a_2$ be positive numbers such that
\begin{gather*}
\eta_0<\eta_1\,,\quad 0<\a_1<\a<\a_2<2\k\,,\quad
\eta_*(\a_1)<\eta_1<\eta_*(\a)<\eta_2<\eta_*(\a_2)\,.
\end{gather*}
Suppose that $\pm\eta\ge \eta_2$. Then, it follows from
Lemmas~\ref{lem:correspondence} and \ref{lem:CC'-high}  that for any
solution $\bQd_\eta\in C(\R;\ell^2_\a)$ of \eqref{eq:linear1-F},
there exists a unique solution
$\bQ_\eta\in C(\R;\ell^2_\a)$ of \eqref{eq:linear0-F} such that
for every $t\in\R$,
$(\mathbf{Q_1}(\eta),\mathbf{Q_2}(\eta))=(\bQ_\eta,\pd_t\bQ_\eta)$ and
$(\mathbf{Q'_1}(\eta),\mathbf{Q_2'}(\eta))=(\bQd_\eta,\pd_t\bQd_\eta)$ satisfy
\eqref{eq:Darboux2-F} and 
\begin{align}
\label{eq:QsimQd}
K^{-1}\left( \|\la \eta\ra\bQ_\eta(t)\|_{\ell^2_\a}
    +\|\pd_t\bQ_\eta(t)\|_{\ell^2_\a}\right)
 \le &
\|\la \eta\ra\bQd_\eta(t)\|_{\ell^2_\a} +\|\pd_t\bQd_\eta(t)\|_{\ell^2_\a}
  \\ \le &  \notag
 K\left( \|\la \eta\ra\bQ_\eta(t)\|_{\ell^2_\a}
         +\|\pd_t\bQ_\eta(t)\|_{\ell^2_\a}\right)\,,
\end{align}
where $K$ is a positive constant that depends only on $\a$ and $\eta_2$.
Combining the above with \eqref{eq:decay-Qeta}, we have for $\pm\eta\ge \eta_2$ and
$t$, $s\in\R$ with $t\ge s$,
\begin{equation}
\label{eq:14}
\begin{split}
 & e^{-\a ct}\left( \|\la \eta\ra\bQd_\eta(t)\|_{\ell^2_\a}
    +\|\pd_t\bQd_\eta(t)\|_{\ell^2_\a}\right)
  \\ \le & K_1K^2e^{-b_1(t-s)}
e^{-\a cs}\left( \|\la \eta\ra\bQd_\eta(s)\|_{\ell^2_\a}
    +\|\pd_t\bQd_\eta(s)\|_{\ell^2_\a}\right)\,.
\end{split}  
\end{equation}
Similarly, we have for $\pm\eta\ge \eta_1$ and $t$, $s\in\R$ with $t\ge s$,
\begin{equation}
  \label{eq:15}
  \begin{split}
&  e^{-\a_1 ct}\left( \|\la \eta\ra\bQd_\eta(t)\|_{\ell^2_{\a_1}}
    +\|\pd_t\bQd_\eta(t)\|_{\ell^2_{\a_1}}\right)
  \\ &
  \le  K_1'e^{-b_1'(t-s)}
e^{-\a_1 cs}\left( \|\la \eta\ra\bQd_\eta(s)\|_{\ell^2_{\a_1}}
  +\|\pd_t\bQd_\eta(s)\|_{\ell^2_{\a_1}}\right)\,, 
  \end{split}
\end{equation}
where $b_1'=c\a_1-2\sinh(\a_1/2)$ and $K_1'$ is a positive constant depending only on
$\a_1$ and $\eta_1$.
\par
If $|\eta|$ is sufficiently small, then solutions of \eqref{eq:linear1-F}
satisfying the secular term conditions decay like solutions of
\eqref{eq:linear0-F}.
Suppose that $\bQd_\eta$ is a solution of \eqref{eq:linear1-F}
in the class $C^2(\R;\ell^2_\a)$ satisfying
\eqref{eq:orth-Q1'Q2'} and \eqref{eq:orth-Q1'Q2'-new}.
If $\eta\in[-\eta_1,\eta_1]$, it follows from
Lemmas~\ref{lem:correspondence}, \ref{lem:sol-Q} and \ref{lem:C'-low} that
there exists a unique solution $\bQ_\eta$ of \eqref{eq:linear0-F}
satisfying \eqref{eq:QsimQd}, where $K_2$ is a constant that depends only on $\a$.
Thus for $\eta\in[-\eta_1,\eta_1]$, $\bQd_\eta$ satisfying
\eqref{eq:orth-Q1'Q2'} and \eqref{eq:orth-Q1'Q2'-new}
and $t$, $s\in\R$ with $t\ge s$,
\begin{equation}
\label{eq:16}
\begin{split}
& e^{-\a ct}\left( \|\la \eta\ra\bQd_\eta(t)\|_{\ell^2_\a}
    +\|\pd_t\bQd_\eta(t)\|_{\ell^2_\a}\right)
  \\ \le & K_2e^{-b_1(t-s)}
e^{-\a cs}\left( \|\la \eta\ra\bQd_\eta(s)\|_{\ell^2_\a}
    +\|\pd_t\bQd_\eta(s)\|_{\ell^2_\a}\right)\,,  
\end{split}
\end{equation}
where $K_2$ is a positive constant that depends only on $\a$ and $\eta_1$.

Likewise, if $\eta\in[-\eta_2,\eta_2]$ and $\bQd_\eta$ satisfies
\eqref{eq:orth-Q1'Q2'} and \eqref{eq:orth-Q1'Q2'-new},
then for $t$, $s\in\R$ with $t\ge s$,
\begin{equation}
  \label{eq:17}
  \begin{split}
&  e^{-\a_2 ct}\left( \|\la \eta\ra\bQd_\eta(t)\|_{\ell^2_{\a_2}}
    +\|\pd_t\bQd_\eta(t)\|_{\ell^2_{\a_2}}\right)
  \\ \le & K_2'e^{-b_1''(t-s)}
e^{-\a_2 cs}\left( \|\la \eta\ra\bQd_\eta(s)\|_{\ell^2_{\a_2}}
    +\|\pd_t\bQd_\eta(s)\|_{\ell^2_{\a_2}}\right)\,,
  \end{split}
\end{equation}
where $b_1''=c\a_2-2\sinh(\a_2/2)$ and $K_2'$ is a positive constant
depending only on $\a_2$ and $\eta_2$.
\par
For $\mathbf{f}\in \ell^2_\a H^1\times \ell^2_\a L^2$ and  $a_1$, $a_2$ with
$0\le a_1\le a_2\le \infty$, let 
\begin{equation*}
P_0(a_1,a_2)\mathbf{f}(n,y)=
\frac{1}{2\pi}  \int_{a_1\le |\eta|\le a_2}\int_\R \mathbf{f}(n,y_1)e^{i\eta(y-y_1)}\,dy_1d\eta\,.
\end{equation*}
We have $U(t,s)P_0(a_1,a_2)=P_0(a_1,a_2)U(t,s)$ since $V^\k$ is independent of $y$.
By \eqref{eq:10}, \eqref{eq:14} and \eqref{eq:15}, we have for $t$, $s\in\R$ with $t\ge s$,
\begin{gather}
 \label{eq:freqh}
  e^{-\a ct}\|P_0(\eta_2,\infty)U(t,s)\mathbf{f}\|_{\ell^2_\a H^1\times \ell^2_\a L^2}
\le K_3e^{-b_1(t-s)}
e^{-\a cs}\|\mathbf{f}\|_{\ell^2_\a H^1\times \ell^2_\a L^2}\,,
\\
  \label{eq:freq1-2}
 e^{-\a_1ct}
 \|P_0(\eta_1,\eta_2)U(t,s)\mathbf{f}\|_{\ell^2_{\a_1} H^1\times \ell^2_{\a_1} L^2}
 \le K_3'
e^{-b_1'(t-s)}e^{-\a_1 cs}\|\mathbf{f}\|_{\ell^2_{\a_1} H^1\times \ell^2_{\a_1} L^2}\,,
\end{gather}
where $K_3$ is a positive constant that depends only on $\a$ and $\eta_2$ and
$K_3'$ is a positive constant that depends only on $\a_1$ and $\eta_1$.
\par
By \eqref{eq:16}, we have for $t\ge s$,
\begin{equation}
  \label{eq:freql}
  \begin{split}
&  e^{-\a ct}\left\|\{P_0(0,\eta_0)-P_1(t,\eta_0)\}U(t,s)\mathbf{f}
\right\|_{\ell^2_\a H^1\times \ell^2_\a L^2}
\\ \le & K_4e^{-b_1(t-s)}
e^{-\a cs}\|\mathbf{f}\|_{\ell^2_\a H^1\times \ell^2_\a L^2}\,,
  \end{split}
\end{equation}
\begin{align*}
&  e^{-\a ct}\left\|\{P_0(\eta_0,\eta_1)-P_1(t,\eta_1)+P_1(t,\eta_0)\}U(t,s)\mathbf{f}
\right\|_{\ell^2_\a H^1\times \ell^2_\a L^2}
  \\ \le &
K_4e^{-b_1(t-s)}e^{-\a cs}\|\mathbf{f}\|_{\ell^2_\a H^1\times \ell^2_\a L^2}\,,
\end{align*}
where $K_4$ is a positive constant that depends only on $\a$ and $\eta_1$.
\par
On the other hand, it follows from Lemma~\ref{cl:delta-eta} that for $t\ge s$,
\begin{equation}
  \label{eq:19}
\begin{split}
&  e^{-\a ct}\left\|    \{P_1(t,\eta_1)-P_1(t,\eta_0)\}U(t,s)\mathbf{f}
\right\|_{\ell^2_\a H^1\times \ell^2_\a L^2}
\\  \le & K_5e^{-\delta_R(\eta_0)(t-s)}
e^{-\a cs}\|\mathbf{f}\|_{\ell^2_\a H^1\times \ell^2_\a L^2}\,,
\end{split}  
\end{equation}
where $K_5$ is a constant that depends only on $\eta_0$, $\eta_1$ and $\a$.
Combining the above, we have for $t\ge s$,
\begin{gather*}
  e^{-\a ct}\left\|P_0(\eta_0,\eta_1)U(t,s)\mathbf{f}
    \right\|_{\ell^2_\a H^1\times \ell^2_\a L^2} \le K_6e^{-b_2(t-s)}
e^{-\a cs}\|\mathbf{f}\|_{\ell^2_\a H^1\times \ell^2_\a L^2}\,,
\end{gather*}
where $K_6$ and $b_2$ are positive constants that depend only on $\eta_0$, $\eta_1$ and $\a$.
Similarly, it follows from \eqref{eq:17} and Lemma~\ref{cl:delta-eta}
that for $t\ge s$,
\begin{equation}
  \label{eq:freq1-2'}
  e^{-\a_2 ct}\left\|P_0(\eta_1,\eta_2)U(t,s)\mathbf{f}
  \right\|_{\ell^2_{\a_2} H^1\times \ell^2_{\a_2} L^2} \le K_6'e^{-b_2'(t-s)}
 e^{-\a_2 cs}\|\mathbf{f}\|_{\ell^2_{\a_2} H^1\times \ell^2_{\a_2} L^2}\,,
\end{equation}
where $K_6'$ and $b_2'$ are positive constants that depend only on $\eta_1$, $\eta_2$ and $\a_2$.
\par
Applying the complex interpolation theorem to \eqref{eq:freq1-2} and \eqref{eq:freq1-2'},
we have for $t\ge s$,
\begin{equation}
 \label{eq:freqt}
  e^{-\a ct}\left\|P_0(\eta_1,\eta_2)U(t,s)\mathbf{f}
  \right\|_{\ell^2_\a H^1\times \ell^2_\a L^2} \le K_7e^{-b_3(t-s)}
 e^{-\a cs}\|\mathbf{f}\|_{\ell^2_\a H^1\times \ell^2_\a L^2}\,,
\end{equation}
where $K_7$ and $b_3$ are positive constants depending only on
$\a$, $\a_1$, $\a_2$, $\eta_1$ and $\eta_2$.
\par
Combining \eqref{eq:freqh}, \eqref{eq:freql}, \eqref{eq:19}
and \eqref{eq:freqt}, we have
Theorem~\ref{thm:main}. This completes the proof of Theorem~\ref{thm:main}.
\end{proof}

Finally, we will prove Theorem~\ref{thm:profile}.
\begin{proof}[Proof of Theorem~\ref{thm:profile}]
\par
It follows from Theorem~\ref{thm:main} that for $t\ge s$,
\begin{align*}
& e^{-\a ct}  \left\|\left(I-P(t,\eta_0)\right)U(t,s)\mathbf{f}
\right\|_{\ell^2_\a H^1(\R)\times \ell^2_\a L^2(\R)}
  \\  \le & K
e^{-b(t-s)}e^{-\a cs} \|\mathbf{f}\|_{\ell^2_\a H^1(\R)\times \ell^2_\a L^2(\R)}\,,
\end{align*}
where $c=\sinh\k/\k$ and $K$ and $b$ are positive constants.
\par
By Lemma~\ref{cl:delta-eta},
\begin{equation*}
\delta_R(\eta)=\lambda_2\eta^2+O(\eta^4)\,,\quad
\delta_I(\eta)=-\lambda_1\eta+O(\eta^3)\,.
\end{equation*}

\begin{gather}
  \label{eq:exp dR}
  \|e^{-t\delta_R(\eta)}\|_{L^2(-\eta_0,\eta_0)}=O(t^{-1/4})\,,
\\ \label{eq:exp dR-sin}
\left\|e^{-t\delta_R(\eta)}\frac{\sin t\delta_I(\eta)}{\eta}
+e^{-t\lambda_2\eta^2}\frac{\sin t\lambda_1\eta}{\eta}
\right \|_{L^2(-\eta_0,\eta_0)}=O(t^{-1/4})\,.
\end{gather}
By Claim~\ref{cl:H1-bound}, \eqref{eq:tg1}--\eqref{eq:tg2*},
\eqref{eq:exp dR} and \eqref{eq:exp dR-sin},
\begin{gather*}
\|g^1(t,\eta)\|_{\ell^2_\a L^2(-\eta_0,\eta_0)}=O(t^{-1/4})\,,
\\
 \left\|\tg^2(t,\eta)+e^{-t\lambda_2\eta^2-\k z_n(t)}\sech \k z_n(t)\frac{\sin t\lambda_1\eta}{\eta}
\right \|_{\ell^2_\a L^2(-\eta_0,\eta_0)}=O(t^{-1/4})\,.
\end{gather*}
In the last line, we use
\begin{equation}
  \label{eq:21}
\gamma_R(\eta)=\k+O(\eta^2)\,,\quad \gamma_I(\eta)=\eta\csch\k+O(\eta^3)\,.
\end{equation}
Let
$$Q^\k_n=\log\frac{\cosh\k z_n(t)}{\cosh\k z_{n-1}(t))}\,.$$
Then $R^\k_n=Q^\k_{n+1}-Q^\k_n$ and
\begin{align*}
\pd_tQ^\k_n=& -\sinh\k (1-e^{-\pd})\tanh\k z_n(t)
  \\=&
\sinh\k (1-e^{-\pd})\left(e^{-\k z_n(t)}\sech \k z_n(t)\right)
\\=& -\sinh^2\k\sech\k z_n(t)\sech\k z_{n-1}(t)\,.  
\end{align*}
Thus we have
\begin{align*}
\left\|g^2(t,\eta)+e^{-t\lambda_2\eta^2}\frac{\sin t\lambda_1\eta}{\eta}
  \csch\k\pd_tQ^\k\right \|_{\ell^2_\a L^2(-\eta_0,\eta_0)}=O(t^{-1/4})\,.
\end{align*}
\par

Suppose that $\bQd(t)$ is a solution of \eqref{eq:6}. Then
by Lemma~\ref{lem:qJ-1q*},
\begin{align*}
  \left\la
\begin{pmatrix}
  \mF_y\bQd(t)(\eta) \\   \mF_y\pd_t\bQd(t)(\eta)
\end{pmatrix},
\mathbf{g^{j,*}}(t,\eta)\right\ra
  =&  \left\la
\begin{pmatrix}
     \mF_y\bQd(0)(\eta) \\  \mF_y\pd_t\bQd(0)(\eta)
\end{pmatrix},
\mathbf{g^{j,*}}(0,\eta)\right\ra
  \\=& \hat{f}(\eta)++O(\eta)\,,
\end{align*}
where
\begin{equation*}
  f_j(y)=\left\la
\begin{pmatrix}
     \bQd(0) \\ \pd_t\bQd(0)
\end{pmatrix},
\mathbf{g^{j,*}}(0,0)\right\ra\,.
\end{equation*}
By \eqref{eq:tg1*}, \eqref{eq:tg2*} and \eqref{eq:21},
\begin{equation*}
g^{1,*}(0,0)=-\csch\k(1+\pd_kQ^\k)\in\ell^2_{-\a}\,,\quad
g^{2,*}(0,0)=-\csch\k\pd_tQ^\k\in\ell^2_{-\a}\,,
\end{equation*}
and $f_j\in L^1(\R_y)$ for $j=1$ and $2$.
\end{proof}

\appendix

\section{Miscellaneous results}
\label{sec:appendix1}
Let $\a\in\R$ and $\ell^2_\a$ be a Hilbert of complex sesuences
with a norm $\|x\|_{\ell^2_\a}=(\sum_{n\in\Z}e^{2\a n}|x_n|^2)^{1/2}$
for $x=\{x_n\}_{n\in\Z}$.
The operators $e^{\pm\pd}-1$ have bounded inverse on $\ell^2_\a$
if $\a\ne0$. Indeed, we have the following.
\begin{lemma}
  \label{lem:pd-1}
Let $\a$ be a positive constant. Then
\begin{gather}
  \label{eq:pd-1a}
    \|e^{\pm\pd} f\|_{\ell^2_\a}=e^{\mp\a}\|f\|_{\ell^2_\a}\,,\\
  \label{eq:pd-1b}
    \|(e^\pd-1)^{-1}f\|_{\ell^2_\a}\le \frac{1}{1-e^{-\a}}\|f\|_{\ell^2_\a}\,,
    \quad
    \|(1-e^{-\pd})^{-1}f\|_{\ell^2_\a}\le \frac{1}{e^\a-1}\|f\|_{\ell^2_\a}\,.
  \end{gather}
\end{lemma}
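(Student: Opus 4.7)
The plan is a short direct computation. First, for \eqref{eq:pd-1a}, I would unfold the definition of the weighted norm and reindex: writing
\begin{equation*}
\|e^{\pd}f\|_{\ell^2_\a}^2 = \sum_{n\in\Z} e^{2\a n}|f_{n+1}|^2
= e^{-2\a}\sum_{m\in\Z} e^{2\a m}|f_m|^2,
\end{equation*}
and similarly for $e^{-\pd}$. So $e^{\pm\pd}$ are scalar multiples of isometries on $\ell^2_\a$, with the claimed norms.

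Next, for the bound on $(e^\pd-1)^{-1}$, I would expand by Neumann series: writing $(e^\pd-1)^{-1}=-(1-e^\pd)^{-1}$, the identity \eqref{eq:pd-1a} gives $\|e^\pd\|_{B(\ell^2_\a)}=e^{-\a}<1$ since $\a>0$, so the series $-\sum_{k\ge 0}e^{k\pd}$ converges in operator norm. Applying the triangle inequality and \eqref{eq:pd-1a} term by term gives a geometric series with bound $\sum_{k\ge 0}e^{-k\a}=(1-e^{-\a})^{-1}$. For the bound on $(1-e^{-\pd})^{-1}$, a direct Neumann expansion fails because $\|e^{-\pd}\|_{B(\ell^2_\a)}=e^{\a}>1$. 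The trick is to use the algebraic factorization
\begin{equation*}
1-e^{-\pd}=e^{-\pd}(e^\pd-1),
\end{equation*}
which, after inversion and noting that $e^\pd$ and $(e^\pd-1)^{-1}$ commute, yields $(1-e^{-\pd})^{-1}=(e^\pd-1)^{-1}e^\pd$. Composing the previous two estimates gives the bound $e^{-\a}/(1-e^{-\a})=1/(e^\a-1)$.

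There is no real obstacle here — the only small conceptual point is the asymmetry between the two inverses: one is a Neumann series in $e^\pd$ (a contraction on $\ell^2_\a$), while the other must be obtained by factoring out $e^{-\pd}$ (which expands in $\ell^2_\a$) and reducing to the first case.
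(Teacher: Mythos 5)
Your proof is correct and follows essentially the same route as the paper: the paper also verifies \eqref{eq:pd-1a} directly from the definition and then expands $(e^\pd-1)^{-1}=-\sum_{k\ge0}e^{k\pd}$ and $(1-e^{-\pd})^{-1}=-\sum_{k\ge1}e^{k\pd}$ as convergent series in the contraction $e^\pd$, the latter being exactly your factorization $(1-e^{-\pd})^{-1}=(e^\pd-1)^{-1}e^\pd$ written out as a shifted geometric series. No substantive difference.
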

\begin{proof}
 Eq.~\eqref{eq:pd-1a} follows immediately from the definitions.
 Since
 \begin{equation*}
(e^\pd-1)^{-1}=-\sum_{k\ge0}e^{k\pd}\,,\quad
(1-e^{-\pd})^{-1}=-\sum_{k\ge1}e^{k\pd}\quad\text{on $\ell^2_\a$,}
 \end{equation*}
 we have \eqref{eq:pd-1b} from \eqref{eq:pd-1a}.
\end{proof}

To estimate the $\ell^2_\a$-norm of Jost functions, we use the
 following.
 \begin{claim}
   \label{cl:H1-bound}
   For every $f\in H^1(\R)$,
   $\sum_{n\in\Z}|f(n)|^2\le 2\|f\|_{H^1(\R)}^2$.
 \end{claim}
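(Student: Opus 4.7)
\textbf{Proof proposal for Claim~\ref{cl:H1-bound}.} The plan is to prove a standard trace-type inequality on each unit interval $[n,n+1]$ via the fundamental theorem of calculus, and then sum the resulting bounds. By density, it suffices to treat $f\in C^1(\R)\cap H^1(\R)$.

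First I would fix $n\in\Z$. For any $x\in[n,n+1]$, the identity
\begin{equation*}
|f(n)|^2=|f(x)|^2-\int_n^x 2\Re\bigl(\overline{f(t)}f'(t)\bigr)\,dt
\end{equation*}
holds. Integrating this identity in $x$ over $[n,n+1]$ (an interval of length one, which is the key place the constant $2$ will come from) and bounding the double integral using Fubini together with the elementary inequality $2ab\le a^2+b^2$ yields
\begin{equation*}
|f(n)|^2\le \int_n^{n+1}|f(x)|^2\,dx+\int_n^{n+1}\bigl(|f(t)|^2+|f'(t)|^2\bigr)\,dt.
\end{equation*}

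Summing over $n\in\Z$ and using that the intervals $[n,n+1]$ tile $\R$, one obtains
\begin{equation*}
\sum_{n\in\Z}|f(n)|^2\le 2\|f\|_{L^2(\R)}^2+\|f'\|_{L^2(\R)}^2\le 2\|f\|_{H^1(\R)}^2,
\end{equation*}
which is the claimed bound. A density argument then extends the inequality from $C^1\cap H^1$ to all of $H^1(\R)$, using the standard fact that evaluation at a point is continuous on $H^1(\R)$.

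There is no real obstacle here; this is a routine Sobolev trace/embedding estimate, and the only mild bookkeeping issue is arranging the Cauchy--Schwarz step so that the final constant is exactly $2$ rather than a larger number.
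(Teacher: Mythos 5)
Your proof is correct and follows essentially the same route as the paper: write $|f(n)|^2-|f(x)|^2$ via the fundamental theorem of calculus on $[n,n+1]$, integrate in $x$ over the unit interval, bound the error term by $\int_n^{n+1}(|f|^2+|f'|^2)$ using $2ab\le a^2+b^2$, and sum over $n$. The only cosmetic difference is that you handle complex-valued $f$ directly with the real part, whereas the paper reduces to real-valued $f\in C^\infty_0(\R)$; both give the constant $2$.
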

 \begin{proof}
We may assume that $f$ is real-valued and
$f\in C^\infty_0(\R)$.
  For $x\in[n,n+1]$,
   \begin{align*}
     f(n)^2=& f(x)^2+2\int_x^n f(y)f'(y)\,dy
              \\ \le & f(x)^2+\int_n^x\{f'(y)^2+f(y)^2\}\,dy\,.
   \end{align*}
   Integrating the above over $[n,n+1]$ and adding the resulting equation
for each $n\in\Z$, we have
   \begin{align*}
 \sum_{n\in\Z} f(n)^2\le & \sum_{n\in\Z} \int_n^{n+1}\{f'(x)^2+2f(x)^2\}\,dx
\le 2\|f\|_{H^1(\R)}^2\,.
   \end{align*}
   Thus we complete the proof.
 \end{proof}

Let $\beta_+$ and $\beta_-$ be complex constants
satisfying $|\beta_+|<1<|\beta_-|$.
We will show that for any $f=\{f_n\}_{n\in\Z}\in \ell^p$, there exists
$a=\{a_n\}_{n\in\Z}\in \ell^p$ such that
\begin{equation*}
  a_{n+1}-(\beta_++\beta_-)a_n+\beta_+\beta_-a_{n-1}=f_n\quad\text{for
    $n\in\Z$.}
\end{equation*}
For a complex sequence $f=\{f_n\}_{n\in\Z}$, let
\begin{equation*}
 k*f(n):=\sum_{m\in\Z} k_mf_{n-m}\,.
\end{equation*}
Then, we have the following.
\begin{lemma}
  \label{lem:fundsol-seq}
Let $1\le p\le\infty$ and $k=\{k_n\}_{n\in\Z}$,
$k_n=\beta_+^n/(\beta_+-\beta_-)$ for $n\ge0$ and
$k_n=\beta_-^n/(\beta_+-\beta_-)$ for $n\le-1$. 
If $|\beta_+|<1<|\beta_-|$ and $f\in\ell^p$,
\begin{equation*}
  \|k*f\|_{\ell^p}\le \frac{1}{|\beta_+-\beta_-|}
  \left(\frac{1}{1-|\beta_+|}
    +\frac{1}{|\beta_-|-1}\right)\|f\|_{\ell^p}\,,
\end{equation*}
and $\mathbf{a}=k*f$ is a solution of
\begin{equation}
  \label{eq:sol-f}
  (e^\pd-\beta_+-\beta_-+\beta_+\beta_-e^{-\pd})\mathbf{a}=f\,.
\end{equation}
  \end{lemma}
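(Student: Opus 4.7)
The plan splits naturally into two independent parts: (i) establishing the norm bound via Young's convolution inequality, and (ii) verifying by direct computation that $k$ is a two-sided Green kernel for the difference operator $L:=e^\pd-\beta_+-\beta_-+\beta_+\beta_-e^{-\pd}$.

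For part (i), I would first compute $\|k\|_{\ell^1}$ by summing the two geometric tails. Since $|\beta_+|<1<|\beta_-|$, both series
\begin{equation*}
\sum_{n\ge 0}|\beta_+|^n=\frac{1}{1-|\beta_+|},\qquad
\sum_{n\le -1}|\beta_-|^n=\sum_{m\ge 1}|\beta_-|^{-m}=\frac{1}{|\beta_-|-1}
\end{equation*}
converge, giving exactly the factor in parentheses in the statement after dividing by $|\beta_+-\beta_-|$. The bound on $\|k*f\|_{\ell^p}$ for every $1\le p\le\infty$ then follows immediately from Young's inequality $\|k*f\|_{\ell^p}\le\|k\|_{\ell^1}\|f\|_{\ell^p}$. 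This part is purely a routine check.

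For part (ii), the key observation is that $L$ factors as $(e^\pd-\beta_+)(1-\beta_-e^{-\pd})$, so $\beta_+$ and $\beta_-$ are the two roots of the characteristic polynomial $\beta^2-(\beta_++\beta_-)\beta+\beta_+\beta_-=0$. I would verify $Lk=\delta_{\cdot,0}$ by cases on $n$. For $n\ge 1$ the three values $k_{n-1},k_n,k_{n+1}$ all lie on the $\beta_+^n$ branch, so $(Lk)_n$ is a multiple of $\beta_+^{n-1}$ times the characteristic polynomial evaluated at $\beta_+$, hence zero; similarly for $n\le -2$ on the $\beta_-$ branch. The only nontrivial verifications are the two boundary values $n=0$ and $n=-1$, where $k_{-1},k_0,k_1$ straddle both branches; substituting the explicit formulas $k_{-1}=\beta_-^{-1}/(\beta_+-\beta_-)$, $k_0=1/(\beta_+-\beta_-)$, $k_1=\beta_+/(\beta_+-\beta_-)$ and simplifying yields $(Lk)_0=1$ and $(Lk)_{-1}=0$ after cancellation. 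Since $L$ commutes with convolution in $n$, this gives $L(k*f)=(Lk)*f=\delta_0*f=f$, which is \eqref{eq:sol-f}.

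There is essentially no obstacle; the only mild care is the boundary-case arithmetic at $n=0,-1$, where one must track the cross terms that arise because the two branches of $k$ meet. Everything else is immediate from the factorization of $L$ and absolute summability of $k$.
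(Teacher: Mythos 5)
Your proposal is correct and follows essentially the same route as the paper: verify the fundamental-solution identity $k_{n+1}-(\beta_++\beta_-)k_n+\beta_+\beta_-k_{n-1}=\delta_{0n}$ (the paper states this directly "by the definition", while you spell out the case analysis at $n=0,-1$ and use the characteristic-polynomial factorization for the other $n$), and then obtain the norm bound from $\|k\|_{\ell^1}$ and Young's inequality. The boundary-case arithmetic you describe checks out, so there is nothing to add.
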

  \begin{proof}
By the definition,
    \begin{equation}
 \label{eq:fund-sol}
k_{n+1}-(\beta_++\beta_-)k_n+\beta_+\beta_-k_{n-1}=\delta_{0n}\,,
\end{equation}
and  \eqref{eq:sol-f} follows from \eqref{eq:fund-sol}.
\par
Moreover,
\begin{equation*}
  \|k\|_{\ell^1}=\frac{1}{|\beta_+-\beta_-|}
  \left(\frac{1}{1-|\beta_+|}+\frac{1}{|\beta_-|-1}\right)\,,
\end{equation*}
and $\|\mathbf{a}\|_{\ell^p}\le \|k\|_{\ell^1}\|f\|_{\ell^p}$
by Young's inequality.
Thus we complete the proof.
\end{proof}

\begin{lemma}
  \label{lem:qJ-1q*}
Let $\a>0$, $\eta\in\R$ and $V'_n(t)\in C(\R;\ell^\infty)$.
Let $q(t)\in C^2(\R,\ell^2_\a)$ and $q^*(t)\in C^2(\R;\ell^2_{-\a})$ be solutions of
 \begin{equation}
    \label{eq:1}
(\pd_t^2+\eta^2)Q=(1-e^{-\pd})(1+V')(e^\pd-1)Q\,.
  \end{equation}
Then $\la q(t),\pd_tq^*(t)\ra-\la \pd_tq(t),q^*(t)\ra$
does not depend on $t$.
\end{lemma}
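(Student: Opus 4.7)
The plan is to show that the quantity
\[
W(t) := \la q(t),\pd_tq^*(t)\ra - \la \pd_tq(t),q^*(t)\ra
\]
is a Wronskian-type conserved quantity, by showing $dW/dt\equiv 0$.

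First I would verify that $W(t)$ is well defined: since $q(t)\in\ell^2_\a$ and $q^*(t)\in\ell^2_{-\a}$ (and similarly for $\pd_tq$, $\pd_tq^*$), the Cauchy--Schwarz inequality $|\sum_n q_n\overline{q^*_n}|\le \|q\|_{\ell^2_\a}\|q^*\|_{\ell^2_{-\a}}$ makes the pairing a bounded bilinear form on $\ell^2_\a\times\ell^2_{-\a}$. The shift operators $e^{\pm\pd}$ remain bounded on $\ell^2_{\pm\a}$ by Lemma~\ref{lem:pd-1} and multiplication by $1+V'\in \ell^\infty$ preserves $\ell^2_{\pm\a}$, so every term that appears below lives in a pairing that converges absolutely.

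Next I would differentiate $W$ using the product rule and the regularity $q,q^*\in C^2$:
\[
\frac{d}{dt}W(t)=\la q,\pd_t^2q^*\ra-\la\pd_t^2q,q^*\ra,
\]
the cross terms $\pm\la\pd_tq,\pd_tq^*\ra$ cancelling. Substituting the equation \eqref{eq:1} for each of $\pd_t^2q$ and $\pd_t^2q^*$ and using that $\eta\in\R$ so the $\eta^2$-terms cancel, one is reduced to showing
\[
\la q,\mL q^*\ra=\la \mL q,q^*\ra,\qquad \mL:=(1-e^{-\pd})(1+V')(e^\pd-1).
\]

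The main (and essentially only) step is therefore to verify that $\mL$ is formally self-adjoint with respect to the pairing $\la f,g\ra=\sum_n f_n\overline{g_n}$ on the dual pair $\ell^2_\a\times\ell^2_{-\a}$. A straightforward shift of index gives $(e^\pd)^*=e^{-\pd}$, whence $(e^\pd-1)^*=e^{-\pd}-1=-(1-e^{-\pd})$, and multiplication by the real sequence $1+V'$ is self-adjoint. Hence
\[
\mL^*=(e^\pd-1)^*(1+V')(1-e^{-\pd})^*=-(1-e^{-\pd})(1+V')\bigl(-(e^\pd-1)\bigr)=\mL.
\]
The reindexing involved is legitimate precisely because each sum converges absolutely under the $\ell^2_\a\times\ell^2_{-\a}$ pairing; no boundary terms arise because $\Z$ has none and the weighted decay kills any tail. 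This yields $dW/dt=0$ and the lemma follows. The only potential obstacle is the bookkeeping to confirm absolute convergence of all intermediate sums, but this is immediate from the weight structure.
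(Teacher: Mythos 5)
Your proof is correct and is essentially the paper's argument: the paper packages the same computation as a first-order Hamiltonian system $d\mathbf{q}/dt=JH\mathbf{q}$ with $J^*=-J$ and $H^*=H$, where the self-adjointness of $H$ is exactly your identity $\mL^*=\mL$ for $\mL=(1-e^{-\pd})(1+V')(e^\pd-1)$ under the $\ell^2_\a\times\ell^2_{-\a}$ pairing. The only point worth making explicit is that self-adjointness of multiplication by $1+V'$ uses that $V'$ is real-valued (true in every application, where $V'=V^\k$), an assumption the paper's proof also relies on implicitly.
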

\begin{proof}
  Let $\mathbf{q}={}^t\!(q,\pd_tq)$ and $\mathbf{q^*}={}^t\!(q^*,\pd_tq^*)$.
  Let $H_1=\eta^2-(1-e^{-\pd})(1+V')(e^\pd-1)$ and
  \begin{gather*}
    J= \begin{pmatrix}   O & I \\ -I & O  \end{pmatrix}\,,
    \quad
    H=
    \begin{pmatrix} H_1 & O \\ O & I  \end{pmatrix}\,.
  \end{gather*}
Then $d\mathbf{q}/dt=JH\mathbf{q}$ and $d\mathbf{q^*}/dt=JH\mathbf{q^*}$
Since $J^*=-J$ and $H^*=H$,
\begin{align*}
\frac{d}{dt}\la \mathbf{q},J^{-1}\mathbf{q^*}\ra
 =& \la JH\mathbf{q}, J^{-1}\mathbf{q^*}\ra + \la \mathbf{q}, H\mathbf{q^*}\ra
\\ = & 0\,,
\end{align*}
and
\begin{equation*}
  \la \mathbf{q},J^{-1}\mathbf{q^*}\ra
  =\sum_{n\in\Z}(\pd_tq_nq^*_n-q_n\pd_tq^*_n)
\end{equation*}
does not depend on $t$.
\end{proof}

By the definitions \eqref{def:Phi0}--\eqref{eq:dualJost}, we have the following.
\begin{claim}
  \label{cl:Phi-updown}
\begin{equation*}
  \Phi(a)=ue^{-\pd}\Phi(a)=e^\pd\left(v\Phi(a)\right)
= \left(a-\frac{1}{a}\right)
    \frac{e^{(a+1/a)(x-s)}}{\tau'}\,. 
\end{equation*}
 For any $\beta\in\C$,
 \begin{gather}
   \notag
   \Phi^0(\beta)-ve^{-\pd}\Phi^0(\beta)=e^{-\pd}\Phi(\beta)\,,
   \\ \label{eq:Phi-updown-1}
(e^\pd-1)\left(\frac{\Phi^0(\beta)}{\tau'}\right)
=\frac{\Phi(\beta)}{e^\pd\tau'}\,.  
\end{gather}
For any $\beta\in\C\setminus\{0,a,1/a\}$,
\begin{gather}
\Phi^*(\beta)=e^{-\pd}\Phi^{0,*}(\beta)-(e^\pd u)\Phi^{0,*}(\beta)\,,
\\ \label{eq:Phi-updown-2}
e^{-\pd}\Phi^*(\beta)-(e^\pd v)\Phi^*(\beta)=\Phi^{0,*}(\beta)\,,
\\
\Phi(a)\Phi^*(\beta)=-\frac{1}{(\beta-a)(\beta-\frac1a)}
(e^\pd-1)\left(\Phi(a)e^{-\pd}\Phi^{0,*}(\beta)\right)\,.
\end{gather}
Especially,
\begin{gather*}
    \Phi(a)\Phi^*(\beta_\pm(\eta))=
    \frac{1}{2i\eta}
    (e^\pd-1)\left(\Phi(a)\Phi^{0,*}(\beta_\pm(\eta))\right)\,.
  \end{gather*}
\end{claim}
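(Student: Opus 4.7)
The plan is to verify each identity in the claim by direct algebraic substitution using the explicit formulas \eqref{def:Phi0}--\eqref{eq:dualJost} for $\Phi$, $\Phi^0$, $\Phi^*$, $\Phi^{0,*}$, together with $u_n=\tau'_{n-1}/\tau'_n$ and $v_n=\tau'_n/\tau'_{n-1}$ coming from \eqref{eq:Miura1a}--\eqref{eq:Miura1b} with $\tau_n\equiv 1$. The central algebraic fact I will repeatedly exploit is the three-term recursion $\tau'_{n-1}+\tau'_{n+1}=(a+a^{-1})\tau'_n=-2\cosh\k\cdot\tau'_n$, which is immediate from writing $\tau'_n=a^nP+a^{-n}Q$ with $P=e^{ax-s/a}$, $Q=e^{x/a-as}$, since $a+a^{-1}=-2\cosh\k$.

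I would start with the closed form for $\Phi(a)$. Substituting into $\Phi_n(a)=a^ne^{ax-s/a}(a-\tau'_{n+1}/\tau'_n)$ and using $a\tau'_n-\tau'_{n+1}=(a-a^{-1})a^{-n}Q$, one gets $\Phi_n(a)=(a-a^{-1})e^{(a+1/a)(x-s)}/\tau'_n$. The two equalities $\Phi(a)=ue^{-\pd}\Phi(a)=e^\pd(v\Phi(a))$ are then instantaneous: $u_n$ multiplied by $\Phi_{n-1}(a)$ and $v_{n+1}$ multiplied by $\Phi_{n+1}(a)$ both reproduce the same closed form at $n$. For the identities relating $\Phi^0(\beta)$ and $\Phi(\beta)$, a direct computation gives $\Phi^0_n(\beta)-v_n\Phi^0_{n-1}(\beta)=\beta^{n-1}e^{\beta x-s/\beta}(\beta-\tau'_n/\tau'_{n-1})=\Phi_{n-1}(\beta)$, and combining fractions in $(e^\pd-1)(\Phi^0(\beta)/\tau')$ yields $\beta^n e^{\beta x-s/\beta}(\beta\tau'_n-\tau'_{n+1})/(\tau'_n\tau'_{n+1})=\Phi_n(\beta)/\tau'_{n+1}$, which is \eqref{eq:Phi-updown-1}.

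The identities for the dual Jost functions need one more ingredient: the three-term recursion enters to produce the factor $(\beta-a)(\beta-a^{-1})$. For \eqref{eq:Phi-updown-2}, after substitution the bracketed expression collapses to
\begin{equation*}
\beta\bigl(\beta-\tfrac{\tau'_{n-1}}{\tau'_n}\bigr)-\tfrac{\tau'_{n+1}}{\tau'_n}\bigl(\beta-\tfrac{\tau'_n}{\tau'_{n+1}}\bigr)
=\beta^2-\beta\tfrac{\tau'_{n-1}+\tau'_{n+1}}{\tau'_n}+1
=\beta^2+2\cosh\k\,\beta+1=(\beta-a)(\beta-a^{-1}),
\end{equation*}
which exactly cancels the denominator of $\Phi^*_n(\beta)$ and leaves $\beta^{-n}e^{-\beta x+s/\beta}=\Phi^{0,*}_n(\beta)$. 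The companion identity pairing $\Phi^*$ with $u$ is verified by the same mechanism. For the bilinear identity, I substitute the closed form for $\Phi(a)$ and expand $(e^\pd-1)(\Phi(a)e^{-\pd}\Phi^{0,*}(\beta))$; after pulling out the common factor $(a-a^{-1})e^{(a+1/a)(x-s)}$ and regrouping the $\tau'$-quotients, the three-term recursion again assembles the factor $(\beta-a)(\beta-a^{-1})$, yielding the stated formula.

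Finally, the specialization $\beta=\beta_\pm(\eta)$ combines two observations: first, the defining equation $\beta_\pm^2+2(\cosh\k+i\eta)\beta_\pm+1=0$ gives $(\beta_\pm-a)(\beta_\pm-a^{-1})=\beta_\pm^2+2\cosh\k\,\beta_\pm+1=-2i\eta\,\beta_\pm$; second, $\Phi^{0,*}_{n-1}(\beta)=\beta\Phi^{0,*}_n(\beta)$ gives $e^{-\pd}\Phi^{0,*}(\beta_\pm)=\beta_\pm\Phi^{0,*}(\beta_\pm)$, so the factor $\beta_\pm$ cancels and one is left with the prefactor $1/(2i\eta)$ in front of $(e^\pd-1)(\Phi(a)\Phi^{0,*}(\beta_\pm))$. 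The only real obstacle is bookkeeping: every nontrivial simplification passes through $\beta^2+(a+a^{-1})\beta+1=(\beta-a)(\beta-a^{-1})$, which is exactly the characteristic polynomial governing the recurrence obeyed by $\tau'$, so once one recognizes this pattern the entire claim reduces to tracking cancellations.
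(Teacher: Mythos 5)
Your overall strategy --- direct substitution of the explicit formulas \eqref{def:Phi0}--\eqref{eq:dualJost} with $u_n=\tau'_{n-1}/\tau'_n$, $v_n=\tau'_n/\tau'_{n-1}$, and the three-term recursion $\tau'_{n-1}+\tau'_{n+1}=-2\cosh\k\,\tau'_n$ assembling the factor $(\beta-a)(\beta-a^{-1})=\beta^2+2\cosh\k\,\beta+1$ --- is exactly what the paper intends (it offers no proof beyond ``by the definitions''), and every computation you actually display checks out: the closed form for $\Phi(a)$, the two shift identities for $\Phi(a)$, the identities \eqref{eq:Phi-updown-1} and \eqref{eq:Phi-updown-2}, the bilinear identity, and the specialization to $\beta_\pm(\eta)$ via $(\beta_\pm-a)(\beta_\pm-a^{-1})=-2i\eta\beta_\pm$ together with $e^{-\pd}\Phi^{0,*}(\beta_\pm)=\beta_\pm\Phi^{0,*}(\beta_\pm)$.

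The one place you should not have waved your hands is the sentence ``the companion identity pairing $\Phi^*$ with $u$ is verified by the same mechanism.'' It is not. With $(e^\pd u)_n=u_{n+1}=\tau'_n/\tau'_{n+1}$ one finds
\begin{equation*}
\Phi^{0,*}_{n-1}(\beta)-u_{n+1}\Phi^{0,*}_n(\beta)
=\beta^{-n}e^{-\beta x+s/\beta}\Bigl(\beta-\frac{\tau'_n}{\tau'_{n+1}}\Bigr)
=(\beta-a)\bigl(\beta-\tfrac1a\bigr)\,\Phi^*_n(\beta)\,,
\end{equation*}
so the identity $\Phi^*(\beta)=e^{-\pd}\Phi^{0,*}(\beta)-(e^\pd u)\Phi^{0,*}(\beta)$ as printed is off by the normalization factor $(\beta-a)(\beta-\tfrac1a)$ carried by the definition \eqref{eq:dualJost}. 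The mechanism that saves \eqref{eq:Phi-updown-2} --- two $\tau'$-quotients combining through the recursion into $\beta^2+2\cosh\k\,\beta+1$, which then cancels that denominator --- does not operate here, because the right-hand side contains only the single quotient $\tau'_n/\tau'_{n+1}$ and nothing cancels. This is almost certainly a typo in the paper's statement (the left-hand side should be $(\beta-a)(\beta-\tfrac1a)\Phi^*(\beta)$), but a blind verification must either produce that factor or flag the discrepancy; asserting that it ``verifies by the same mechanism'' is the one genuine gap in your write-up. Everything else is sound and coincides with the paper's (implicit) argument.
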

\bigskip

\section{Orthogonality relation of $g^\pm(\eta)$ and $g^{\pm,*}(\eta)$}
\label{sec:appendix2}
To prove Claim~\ref{cl:pd-t,AB} and Lemma~\ref{lem:orth-relation},
we need the following.
\begin{claim}
  \label{cl:t,y-JdJ}
Let $L_1$ and $L_2$ be as \eqref{eq:Laxpair}.
If  $L_1\Phi=L_2\Phi=0$ and $L_1^*\Phi^*=L_2^*\Phi^*=0$,  then
\begin{gather*}
  2\pd_t(\Phi\Phi^*)=
  (e^\pd-1)\{\Phi(e^{-\pd}\Phi^*)+(1+V)(e^{-\pd}\Phi)\Phi^*\}\,,
  \\
    2\pd_y(\Phi\Phi^*)=
  (e^\pd-1)\{\Phi(e^{-\pd}\Phi^*)-(1+V)(e^{-\pd}\Phi)\Phi^*\}\,.
\end{gather*}
\end{claim}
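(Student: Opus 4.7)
The plan is to rewrite the $t$ and $y$ derivatives in terms of $s$ and $x$ derivatives via the change of variables \eqref{eq:v-change}, apply the Jost and dual Jost equations \eqref{eq:Jost-sol}, \eqref{eq:dualJost-sol} factor-by-factor to $\Phi\Phi^*$, and recognize each resulting two-term expression as a discrete shift difference $(e^\pd-1)[\,\cdot\,]$.

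Concretely, from $t=x+s$, $y=x-s$ I get $\pd_s+\pd_x=2\pd_t$ and $\pd_x-\pd_s=2\pd_y$, so the two claimed identities amount to computing $\pd_s(\Phi\Phi^*)$ and $\pd_x(\Phi\Phi^*)$ separately and then taking sum and difference. The $s$-derivative is already essentially done in the proof of Lemma~\ref{lem:prod}: using \eqref{eq:Jost-sol}--\eqref{eq:dualJost-sol} one finds
\begin{equation*}
\pd_s(\Phi_n\Phi_n^*)=(1+V_{n+1})\Phi_n\Phi_{n+1}^*-(1+V_n)\Phi_{n-1}\Phi_n^*,
\end{equation*}
and since the first term is just the $(n\!\to\!n+1)$ shift of the second, this equals $(e^\pd-1)\bigl[(1+V)(e^{-\pd}\Phi)\Phi^*\bigr]_n$.

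For the $x$-derivative, I substitute $\pd_x\Phi_n=\Phi_{n+1}+(\pd_xq_{n+1})\Phi_n$ and $\pd_x\Phi_n^*=-\Phi_{n-1}^*-(\pd_xq_{n+1})\Phi_n^*$ from \eqref{eq:Jost-sol}--\eqref{eq:dualJost-sol}; the two $(\pd_xq_{n+1})\Phi_n\Phi_n^*$ contributions cancel, leaving
\begin{equation*}
\pd_x(\Phi_n\Phi_n^*)=\Phi_{n+1}\Phi_n^*-\Phi_n\Phi_{n-1}^*=(e^\pd-1)\bigl[\Phi(e^{-\pd}\Phi^*)\bigr]_n.
\end{equation*}
Notice that this $x$-derivative formula does \emph{not} invoke $[L_1,L_2]=0$ (in contrast with Lemma~\ref{lem:prod}, where it was needed to close the identity); here the potential-dependent pieces cancel at the linear level.

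Having these two formulas, I form $2\pd_t(\Phi\Phi^*)=\pd_s(\Phi\Phi^*)+\pd_x(\Phi\Phi^*)$ to get the first identity, and $2\pd_y(\Phi\Phi^*)=\pd_x(\Phi\Phi^*)-\pd_s(\Phi\Phi^*)$ to get the second. There is no genuine obstacle; the only step requiring a little care is bookkeeping the shifts when recognizing $(1+V_{n+1})\Phi_n\Phi^*_{n+1}-(1+V_n)\Phi_{n-1}\Phi_n^*$ and $\Phi_{n+1}\Phi_n^*-\Phi_n\Phi^*_{n-1}$ as $(e^\pd-1)$ applied to a sequence in $n$. This gives both identities simultaneously and completes the proof.
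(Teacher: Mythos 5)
Your proof is correct and follows exactly the route the paper intends: the paper's entire justification is the one-line remark that the claim ``follows from \eqref{eq:v-change}, \eqref{eq:Jost-sol} and \eqref{eq:dualJost-sol},'' and your computation of $\pd_s(\Phi\Phi^*)$ and $\pd_x(\Phi\Phi^*)$ followed by the sum/difference via $2\pd_t=\pd_s+\pd_x$, $2\pd_y=\pd_x-\pd_s$ is precisely the omitted detail. Your side observation that the $(\pd_xq_{n+1})$ terms cancel without invoking $[L_1,L_2]=0$ is also accurate and consistent with the claim's hypotheses.
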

Claim~\ref{cl:t,y-JdJ} follows from
\eqref{eq:v-change}, \eqref{eq:Jost-sol} and \eqref{eq:dualJost-sol}.

\par
Now we will prove Claim~\ref{cl:pd-t,AB}.
\begin{proof}[Proof of Claim~\ref{cl:pd-t,AB}]
 Suppose that $Q=e^{-\pd}\left(\Phi(\beta_1)\Phi^*(\beta_2)\right)$
 for $\beta_1$, $\beta_2\in\C$  and that $\pd_yQ=i\eta Q$.
Note that $\pd_yg^\pm(\eta)=i\eta g^\pm(\eta)$ and 
$\pd_yg^{\pm,*}(\eta)=i\eta\tg^{\pm,*}(\eta)$ by Lemma~\ref{lem:tgform}.
\par
 
By Claims~\ref{cl:t,y-JdJ} and \ref{cl:Phi-updown},
 \begin{align*}
e^\pd B'(i\eta)Q+\pd_tQ=& (\pd_t+\pd_y)Q-(e^\pd-1)vQ
   \\=& (1-e^{-\pd})\{\Phi(\beta_1)e^{-\pd}\Phi^*(\beta_2)
        -(e^{\pd}v)\Phi(\beta_1)\Phi^*(\beta_2)\}
   \\=&
        (1-e^{-\pd})\{\Phi(\beta_1)\Phi^{0,*}(\beta_2)\}\,.
 \end{align*}
 By the definitions, $A'(-i\eta)-B'(i\eta)+C'(\eta)=(e^\pd-1)B'(i\eta)$.
 Hence it follows that
 \begin{align}
   \label{eq:5}
& \{A'(-i\eta)-B'(i\eta)+C'(\eta)\}Q+(1-e^{-\pd})\pd_tQ
\\ = & \notag
(1-e^{-\pd})\{e^\pd B'(i\eta)Q+\pd_tQ\}
\\=&  \notag
   (1-e^{-\pd})^2\Phi(\beta_1)\{\Phi^{0,*}(\beta_2)\}\,.   
 \end{align}
Let  $\beta_1=\beta_+(-\eta)$ and 
calculate the residue of \eqref{eq:5} at $\beta_2=a$.
Since $\operatorname{Res}_{\beta=a}\Phi^*(\beta)=1/(e^\pd\tau')$,
$g^+=e^{-\pd}\Phi(\beta_1)/\tau'$ and 
$C'(\eta)g^+(\eta)=0$ by  Lemma~\ref{lem:kerC-C'},
we have \eqref{eq:clpd-t,AB-1}.
\par
Substituting $\beta_1=a$ and $\beta_2=\beta_-(\eta)$ into \eqref{eq:5},
we have
\begin{align}
  \label{eq:clpd-t,AB-3'}
&  \left\{A'(-i\eta)-B'(i\eta)+C'(\eta)\right\}g^-(\eta)
  +(1-e^{-\pd})\pd_tg^-(\eta) \\
  \\ =& \notag
(1-e^{-\pd})^2\left\{\Phi(a)\Phi^{0,*}(\beta_-(\eta))\right\}\,.  
\end{align}
By Lemma~\ref{lem:Darboux}, Claim~\ref{cl:Phi-updown},
\eqref{def:gpm} and \eqref{def:tgpm*},
\begin{align*}
&  -C'(\eta)g^-(\eta)+(1-e^{-\pd})^2\left(\Phi(a)\Phi^{0,*}(\beta_-(\eta))\right)
\\ =&
(A'-e^{\pd}B')e^{-\pd}\left\{\Phi(a)\Phi(\beta_-(\eta))\right\}
+(1-e^{-\pd})^2\left(\Phi(a)\Phi^{0,*}(\beta_-(\eta))\right)
\\ =& -2i\eta(1-e^{-2\pd})\tg^-(\eta)\,.
\end{align*}
Combining the above with \eqref{eq:clpd-t,AB-3'}, we have
\eqref{eq:clpd-t,AB-3}.
\par

Since $C(\eta)=A(-i\eta)-B(i\eta)$, it follows from
Claim~\ref{cl:A,B,dual} that
 \begin{align*}
   \{A'(-i\eta)-B'(i\eta)-C(\eta)\}^*(e^\pd-1)^{-1}=
   i\eta+(1-e^{-\pd})u\,.
 \end{align*}
Since $(i\eta-\pd_t)Q=(\pd_y-\pd_t)Q=-\pd_sQ$,
 it follows from Lemma~\ref{lem:Darboux} that
 \begin{align}
   \label{eq:18}
& \{A'(-i\eta)-B'(i\eta)-C(\eta)\}^*(e^\pd-1)^{-1}Q-\pd_tQ
=  -A'Q
\\ =& \notag
 (1-e^{-\pd})\left\{u\left(e^{-\pd}\Phi(\beta_1)\right)
       \Phi^{0,*}(\beta_2)\right\}\,.
\end{align}
Letting $\beta_1=\beta_-(-\eta)$ and
calculating the residue of the equation above at $\beta_2=a$, we have
\eqref{eq:clpd-t,AB-2} from \eqref{def:g*pm}, \eqref{eq:g-tg} and
the fact that $C(\eta)^*e^{-\pd}\tg^{+,*}(\eta)=0$.
\par
Substituting $\beta_1=a$ and $\beta_2=\beta_+(\eta)$ into \eqref{eq:18}
and using \eqref{def:g*pm}, \eqref{def:tgpm*} and \eqref{eq:g-tg}, we have
\begin{align}
 \label{eq:clpd-t,AB-4'}
&  \left\{A'(-i\eta)-B'(i\eta)-C(\eta)\right\}^*e^{-\pd}\tg^{-,*}(\eta)
  -\pd_tg^{-,*}(\eta)
  \\ = & \notag
2i\eta(1-e^{-\pd})\tg^{-,*}(\eta)\,.
\end{align}
By the definitions, $C(\eta)^*(e^\pd-1)^{-1}=e^\pd B'-A'$ and
it follows from Lemma~\ref{lem:Darboux} that
\begin{align*}
C(\eta)^*e^{-\pd}\tg^{-,*}(\eta)=&
(e^\pd-1)^{-1}(e^\pd B'-A')e^{-\pd}\left(\Phi(a)\Phi^*(\beta_+(\eta))\right)
  \\=& 2e^{-\pd}\left(\Phi(a)\Phi^*(\beta_+(\eta))\right)
= 4i\eta e^{-\pd}\tg^{-,*}(\eta) \,.
\end{align*}
Substituting the above into \eqref{eq:clpd-t,AB-4'},
we have \eqref{eq:clpd-t,AB-4}.  
Thus we complete the proof.
\end{proof}
Finally, we will prove Lemma~\ref{lem:orth-relation}.
\begin{proof}[Proof of Lemma~\ref{lem:orth-relation}]
Since $g^{+,*}(\eta)=(1-e^{-\pd})\tg^{+,*}(\eta)$, it follows from  
\eqref{eq:clpd-t,AB-1} and \eqref{eq:clpd-t,AB-2} that
  \begin{align*}
& \la g^+(\eta),\pd_tg^{+,*}(\eta)\ra-\la \pd_tg^+(\eta),g^{+,*}(\eta)\ra
  \\=&
\la g^+(\eta),\{A'(-i\eta)-B'(i\eta)\}^*e^{-\pd}\tg^{+,*}(\eta)\ra
\\ & +\la (1-e^{-\pd})^{-1}\{A'(-i\eta)-B'(i\eta)\}g^+(\eta),g^{+,*}(\eta)\ra  
\\=& 0\,.
\end{align*}
\par
By \eqref{eq:g-tg}, \eqref{eq:clpd-t,AB-3} and \eqref{eq:clpd-t,AB-4},
\begin{align*}
& \la g^-(\eta),\pd_tg^{-,*}(\eta)\ra-\la \pd_tg^-(\eta),g^{-,*}(\eta)\ra  
\\=&
2i\eta\left\{
 \left\la g^-(\eta), (1+e^{-\pd})\tg^{-,*}(\eta)\right\ra 
 +\left\la (1+e^{-\pd})\tg^-(\eta), g^{-,*}(\eta)\right\ra
       \right\}
  \\=& -2i\eta\left\{
 \left\la \tg^-(\eta), (e^\pd-e^{-\pd})\tg^{-,*}(\eta)\right\ra 
 +\left\la (e^\pd-e^{-\pd})\tg^-(\eta), \tg^{-,*}(\eta)\right\ra
       \right\}
\\= & 0\,.
\end{align*}
\par
By \eqref{eq:clpd-t,AB-2}, \eqref{eq:clpd-t,AB-3} and \eqref{eq:g-tg},
\begin{align}
 \label{eq:20} 
\la g^-(\eta),\pd_tg^{+,*}(\eta)\ra
    -\la \pd_tg^-(\eta),g^{+,*}(\eta)\ra
=&  -2i\eta\la (e^\pd-e^{-\pd})\tg^-(\eta),\tg^{+,*}(\eta)\ra\,.
\end{align}
Since $\Phi^0(a)\Phi^{0,*}(\beta_\pm(\eta))=a^n\beta_\mp(\eta)^n
e^{iy\eta-t\{\sinh\k\pm\mu(\eta)\}}$,
\begin{equation*}
\pd_y\{\Phi^0(a)\Phi^{0,*}(\beta_\pm(\eta))\}=i\eta
\Phi^0(a)\Phi^{0,*}(\beta_\pm(\eta))\,.  
\end{equation*}
Hence it follows from Lemma~\ref{lem:Darboux} and Claim~\ref{cl:Phi-updown}
that
\begin{align*}
e^\pd C(\eta)e^{-\pd}\left\{\Phi^0(a)\Phi^{0,*}(\beta_\pm(\eta))\right\}
=& e^\pd(A-B)e^{-\pd}\left\{\Phi^0(a)\Phi^{0,*}(\beta_\pm(\eta))\right\}
\\=& -(e^\pd-e^{-\pd})\left\{\Phi(a)\Phi^{0,*}(\beta_\pm(\eta))\right\}\,.
\end{align*}
Combining the above with \eqref{def:tgpm} and \eqref{def:tgpm*}, we have
\begin{gather}
\label{eq:21a}
e^\pd C(\eta)e^{-\pd}\left\{\Phi^0(a)\Phi^{0,*}(\beta_-(\eta))\right\}  
=-2i\eta(e^\pd-e^{-\pd})\tg^-(\eta)\,,
\\ \label{eq:21b}
e^\pd C(\eta)e^{-\pd}\left\{\Phi^0(a)\Phi^{0,*}(\beta_+(\eta))\right\}  
=-2i\eta(e^\pd-e^{-\pd})\tg^{-,*}(\eta)\,.
\end{gather}
\par
By \eqref{eq:20} and \eqref{eq:21a},
\begin{align*}
& \la g^-(\eta),\pd_tg^{+,*}(\eta)\ra
    -\la \pd_tg^-(\eta),g^{+,*}(\eta)\ra
\\ =&
\left\la e^\pd C(\eta)e^{-\pd}\left\{\Phi^0(a)\Phi^{0,*}(\beta_-(\eta))\right\},
\tg^{+,*}(\eta)\right\ra\,.
\end{align*}
By Lemma~\ref{lem:kerC-C'},
\begin{equation*}
\left\{e^\pd C(\eta)e^{-\pd}\right\}^*\tg^{+,*}(\eta)
=e^\pd C(\eta)^*e^{-\pd}\tg^{+,*}=0\,.
\end{equation*}
Since $e^\pd C(\eta)e^{-\pd}=-2i\eta+u+(e^\pd v)-e^\pd u-ve^{-\pd}$,
\begin{equation}
    \label{eq:9}
  (e^\pd Ce^{-\pd}f)\overline{g}-f\overline{e^\pd C^*e^{-\pd}g}
  =(e^\pd-1)\{v(e^{-\pd}f)\bar{g}-ufe^{-\pd}\bar{g}\}\,.
\end{equation}
By Claim~\ref{cl:Phi-updown} and \eqref{def:taun'},
\begin{equation*}
\Phi^0_n(a)\Phi^{0,*}_n(\beta_-(\eta))
\overline{\Phi^0_n(\beta_-(-\eta))}/\tau'_n
=O(e^{-2\k|n|})\quad\text{as $n\to-\infty$.}
\end{equation*}
Hence it follows from \eqref{def:tgpm*} and \eqref{eq:9}  that
\begin{align*}
  & \left\la e^\pd C(\eta)e^{-\pd}
    \left\{\Phi^0(a)\Phi^{0,*}(\beta_-(\eta))\right\},
\tg^{+,*}(\eta)\right\ra
  \\ =& \lim_{n\to\infty}
\biggl\{v_{n+1}\Phi^0_n(a)\Phi^{0,*}_n(\beta_-(\eta))
\Phi^0_{n+1}(\beta_-(\eta))(\tau'_{n+1})^{-1}
  \\ & \qquad\qquad 
  -u_{n+1}\Phi^0_{n+1}(a)\Phi^{0,*}_{n+1}(\beta_-(\eta))
\Phi^0_n(\beta_-(\eta))(\tau'_n)^{-1}\biggr\}\,.
\end{align*}
By the definitions,
\begin{gather*}
\Phi^0_j(\beta)\Phi^{0,*}_k(\beta)=\beta^{j-k}
 \quad\text{for every $j$, $k\in\Z$,}  
\\
\lim_{n\to\infty}\frac{v_{n+1}\Phi^0_n(a)}{\tau'_{n+1}}
=\lim_{n\to\infty}\frac{\Phi^0_n(a)}{\tau'_n}=1\,,
  \\
\lim_{n\to\infty}\frac{u_{n+1}\Phi^0_{n+1}(a)}{\tau'_n}
=\lim_{n\to\infty}\frac{\Phi^0_{n+1}(a)}{\tau'_{n+1}}=1\,.
\end{gather*}
Combining the above, we have,
\begin{align*}
\la g^-(\eta),\pd_tg^{+,*}(\eta)\ra-\la \pd_tg^-(\eta),g^{+,*}(\eta)\ra  
=&\beta_-(\eta)-\beta_-(\eta)^{-1}
\\=& -2\mu(\eta)\,.
\end{align*}
\par
By \eqref{eq:clpd-t,AB-1}, \eqref{eq:clpd-t,AB-4}, \eqref{eq:21b}
and \eqref{eq:9},
\begin{align*}
& \la g^+(\eta),\pd_tg^{-,*}(\eta)\ra-\la \pd_tg^+(\eta),g^{-,*}(\eta)\ra  
  \\=& -\left\la \tg^+(\eta)), e^\pd C(\eta)e^{-\pd}
\left\{\Phi^0(a)\Phi^{0,*}(\beta_+(\eta))\right\}\right\ra       
  \\=& \lim_{n\to\infty}       
\Bigl\{-v_{n+1}\Phi^0_{n+1}(\beta_+(-\eta))(\tau'_{n+1})^{-1}
       \Phi^0_n(a)\Phi^{0,*}_n(-\beta_+(\eta))
  \\ &
       +u_{n+1}\Phi^0_n(\beta_+(-\eta))(\tau'_n)^{-1}
       \Phi^0_{n+1}(a)\Phi^{0,*}_{n+1}(\beta_+(-\eta))\Bigr\}
  \\=& -\beta_+(-\eta)+\beta_+(-\eta)^{-1}
 =-2\mu(-\eta)\,.
\end{align*}
Thus we complete the proof.
\end{proof}

\bigskip

\section*{Acknowledgments}
This work was supported by JSPS KAKENHI Grant Numbers JP21K03328 and 24H00185.
\bigskip

\end{document}